\numberwithin{equation}{section}
\newcommand{\p}{\partial}
\newcommand{\vphi}{\varphi}
\newcommand{\om}{\omega}
\newcommand{\tri}{\triangle}
\newcommand{\eps}{\epsilon}
\newcommand{\thmref}[1]{Theorem~\ref{#1}}
\newcommand{\lemref}[1]{Lemma~\ref{#1}}
\newcommand{\corref}[1]{Corollary~\ref{#1}}
\newcommand{\Om}{\Omega}
\newcommand{\Na}{\nabla}
\newcommand{\cH}{\mathcal H}
\def\p{\partial}
\def\b{\beta}
\def\pbp{\sqrt{-1}\partial\bar\partial}
\def\osc{{\rm osc\,}}
\def\Aut{{\rm Aut}}
\newtheorem{theorem}{Theorem}[section]
\newtheorem{thm}[theorem]{Theorem}
\newtheorem{rem}[theorem]{Remark}
\newtheorem{defn}[theorem]{Definition}
\newtheorem{lemma}[theorem]{Lemma}
\newtheorem{lem}[theorem]{Lemma}
\newtheorem{proposition}[theorem]{Proposition}
\newtheorem{prop}[theorem]{Proposition}
\newtheorem{cor}[theorem]{Corollary}
\def\Re{\mathop{\rm Re}\nolimits} 
\def\Aut{\mathop{\rm Aut}\nolimits}
\def\tr{\mathop{\rm tr}\nolimits}
\def\dbar{\bar\partial}
\def\ddbar{\partial\bar\partial}
\def\d{\partial}
\def\la{\lambda}
\let\ol=\overline
\let\ep=\varepsilon
\let\vphi=\varphi 
\def\bC{{\mathbb C}}
\def\bR{{\mathbb R}}
\def\a{\alpha}
\title[Uniqueness of K\"ahler-Einstein cone metrics]
{Generalized Matsushima's theorem and K\"ahler-Einstein cone metrics}
\author{Long Li}
  \address{Department of Mathematics and Statistics, McMaster University,
1280 Main Street West, Hamilton, ON L8S 4K1, Canada}
  \email[Long Li]{lilong@math.mcmaster.ca}
\author{Kai Zheng}
  \address{Mathematics Institute, University of Warwick, Coventry, CV4 7AL, UK}
  \email[Kai Zheng]{K.Zheng@warwick.ac.uk}
\begin{document}
\maketitle

\begin{abstract}
In this paper, we prove Matsushima's theorem for K\"ahler-Einstein metrics on a Fano manifold with cone singularities along a smooth divisor
that is not necessarily proportional to the anti-canonical class. 
We then give an alternative proof of uniqueness of K\"ahler-Einstein cone metrics by the continuity method.
Moreover, our method provides an existence theorem of K\"ahler-Einstein cone metrics with respect to conic Ding functional.

\end{abstract}

\section{Introduction}

Let $X$ be a K\"ahler manifold, $D$ be a hypersurface and $L_D$ be the associated line bundle of $D$. We denote the regular part by $M:=X\setminus D$. We assume the cone angle $0<\b\leq 1$. 
We further assume $L_D$ is positive and $-(K_X + (1-\beta)L_D) > 0$ and consider the K\"ahler class 
\begin{align*}
\Om=-(K_X+(1-\beta)L_D).
\end{align*}
The automorphism of the pair $(X,D)$ is an automorphism of $X$ and fixs the divisor $D$, and all of these automorphisms of the pair consist of the group $Aut(X;D)$. 

A \emph{K\"ahler cone metric} of cone angle $2\pi\b$ along $D$,
is a closed positive $(1,1)$ current and a smooth K\"ahler metric on the regular part $M$.
{In a local holomorphic chart $\{U_p;z^1,\dots z^n\}$ around a point $p\in D$, its} K\"ahler form is quasi-isometric to the cone flat metric, which is
\begin{align*}
\om_{cone}
&:=\frac{\sqrt{-1}}{2}\b^2|z^1|^{2(\b-1)}dz^1\wedge
dz^{\bar 1}
+\sum_{2\leq j\leq n}dz^j\wedge dz^{\bar j}\, .
\end{align*}
Here $\{z^1,\dots z^n\}$ are the local defining functions of the hypersurface $D$ where $p$ locates.

The space of K\"ahler cone metrics associated to $\Omega$ is non-empty, it contains Donaldson's model metric (see \eqref{model cone} later).
We say a K\"ahler cone metric $\om_\vphi\in \Omega$ is the \emph{K\"ahler-Einstein cone metric} of cone angle $2\pi\b$ along $D$ if it satisfies the equation of currents,
\begin{align*}
Ric (\omega_\vphi)=\om_\vphi+2\pi(1-\b) [D].
\end{align*}

Our first theorem is to generalize Matsushima's theorem to K\"ahler-Einstein cone manifolds
\begin{thm}
Suppose the pair $(X, D)$ admits a K\"ahler-Einstein cone metric of angle $2\pi\b$. Then the Automorphism group $Aut(X;D)$ is reductive. 
\end{thm}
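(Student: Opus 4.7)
The plan is to adapt Matsushima's original argument to the cone setting, working on the regular part $M=X\setminus D$ with the K\"ahler-Einstein cone metric $\om_\vphi$. The goal is to establish the Lie algebra splitting
\[
\mathrm{Lie}(\Aut(X;D)) = \mathrm{Lie}(\Iso(X,D,\om_\vphi))\oplus \sqrt{-1}\,\mathrm{Lie}(\Iso(X,D,\om_\vphi)),
\]
which implies that $\Aut(X;D)$ is the complexification of a compact real form, hence reductive.

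First, observe that any $V\in\mathrm{Lie}(\Aut(X;D))$ is tangent to $D$, so it restricts to a holomorphic vector field on $M$ whose flow preserves $M$. The next step is to produce a complex-valued holomorphy potential $\theta_V$ on $M$ satisfying $\iota_V\om_\vphi=\sqrt{-1}\,\dbar\theta_V$. In the smooth Fano case this comes from $H^{0,1}(X)=0$; here the analogous statement is a Hodge-theoretic solvability result for $\dbar$ with respect to the cone metric in appropriate weighted $L^2$-spaces, yielding $\theta_V$ with controlled asymptotics near $D$.

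With the potential in hand, I would contract the K\"ahler-Einstein equation $\Ricci(\om_\vphi)=\om_\vphi+2\pi(1-\b)[D]$ by $V$. Since $V$ is tangent to the divisor, the distributional current $[D]$ drops out when paired with $V$, producing the linear eigenequation $\Delta_{\om_\vphi}\theta_V+\theta_V=c$ on $M$. Writing $\theta_V=u+\sqrt{-1}v$, the gradient fields $\Na^{1,0}u$ and $\Na^{1,0}v$ are separately holomorphic on $M$, and a Bochner-type identity combined with the eigenequation forces $\Na^{1,0}v$ to generate a Killing field for $\om_\vphi$. Consequently $V$ decomposes as the sum of a real Killing vector field and $\sqrt{-1}J$ applied to a Killing vector field, giving the required splitting at the Lie algebra level.

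The principal difficulty is analytic and lies in pushing each of these steps across the singular locus $D$. Concretely, one needs (i) the correct Hodge-type solvability of $\dbar$ on $(M,\om_\vphi)$ with weighted estimates adapted to the cone angle $\b$, (ii) enough regularity of $\theta_V$ up to $D$ to legitimize the Bochner computation, and (iii) the vanishing of boundary contributions when integrating by parts over shrinking tubular neighbourhoods of $D$. The hypothesis $0<\b\leq 1$ and the tangency of $V$ along $D$ both enter crucially in controlling these boundary terms, and I would expect this analytic bookkeeping to constitute the bulk of the proof, with the remainder reducing to the standard algebraic consequence that a complex Lie algebra of the form $\mathfrak{k}\oplus\sqrt{-1}\mathfrak{k}$ with $\mathfrak{k}$ compact is reductive.
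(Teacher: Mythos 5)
Your proposal follows essentially the same route as the paper: solve a $\dbar$-equation via weighted $L^2$ (Hörmander) estimates to produce the holomorphy potential, use the tangency of $V$ along $D$ and the cone K\"ahler--Einstein equation to obtain the eigenequation $\Delta_{\om_\vphi}\theta_V + \theta_V = c$, and then establish via a Bochner-type identity (with a careful cut-off to kill boundary contributions near $D$) that the imaginary part generates a Killing field, giving $\mathfrak g = \mathfrak k_\theta\oplus\sqrt{-1}\mathfrak k_\theta$. You also correctly flag the three analytic obstructions --- weighted $\dbar$-solvability, regularity of the potential up to $D$, and the integration-by-parts/cut-off argument --- which are exactly where the paper's technical work lies (Propositions 3.3, 3.6--3.8 and the Berndtsson-type cut-off $\rho_\ep=\eta(\ep\log(-\log|s|^2_h))$ in \S3.3).
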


A more precise version can be found in \thmref{reductivityStatement}. 
In fact, we established a one-one correspondence between the holomorphic automorphism group 
and the complexification of the kernel of the following elliptic operator $$\Delta_{\theta} +1$$
at a K\"ahler-Einstein cone metric $\theta$.
And this one-one correspondence is stronger result than the reductivity of the automorphism group. 
Unlike the previous work in Fano case \cite{MR3264768}, we do not require that the K\"ahler class is proportional to the anti-canonical class, 
but certain positivity condition on the divisor is still needed. 
Moreover, it is worthy to mention that this theorem is proved by Kodaira-H\"ormander's $L^2$ techniques, 
but the Kodaira-Bochner formula for K\"ahler cone metrics is not clear to be true at this stage.

\begin{rem}
For the $klt$ - $pair$, Chen-Donaldson-Sun \cite{MR3264768} proved that the automorphism group is reductive. However, they required the uniqueness of weak K\"ahler-Einstein metrics in their proof. 
\end{rem}

\begin{rem}In \cite{MR3406526}, Cheltsov-Rubinstein also announced a result for extremal cone metrics, but their method is based on an expansion formula for edge metrics, which is very different from ours.
\end{rem}

Based on this reductivity result, we can extend Bando-Mabuchi's celebrated work \cite{MR946233} to conic setting and prove the uniqueness of K\"ahler-Einstein cone metris by applying the continuity path, which connects the K\"ahler-Einstein cone metric $\om_\vphi$ to a given K\"ahler cone metric $\om$. I.e. for any $t\in[0,1]$,
\begin{align*}
Ric(\om_{\vphi(t)})=t\om_{\vphi(t)}+(1-t)\om+2\pi(1-\b)[D].
\end{align*}
And we proved the following 

\begin{thm}
The K\"ahler-Einstein cone metric is unique up to automorphisms.
\end{thm}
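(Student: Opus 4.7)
The plan is to adapt Bando-Mabuchi's continuity argument to the conic setting, using the path
\begin{equation*}
Ric(\om_{\vphi(t)}) = t\,\om_{\vphi(t)} + (1-t)\om_0 + 2\pi(1-\b)[D], \qquad t\in[0,1],
\end{equation*}
with one of the two given K\"ahler-Einstein cone metrics, call it $\om_0$, chosen as the reference metric $\om$. Because $Ric(\om_0)=\om_0+2\pi(1-\b)[D]$, the potential $\vphi\equiv 0$ solves this equation for every $t$, so there is a tautological family of solutions $\om_{\vphi(t)}\equiv\om_0$ along the entire interval $[0,1]$.

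The first step is uniqueness of solutions for $t\in[0,1)$. Linearising the equation at a solution $\om_\vphi$ yields, modulo constants, an operator whose invertibility reduces to the eigenvalue bound $\lambda_1(-\Delta_{\om_\vphi})>t$. Any such solution satisfies $Ric(\om_\vphi)\ge t\,\om_\vphi + (1-t)\om_0$, so a Lichnerowicz-type estimate adapted to the cone setting---in the $L^2$ spirit underlying \thmref{reductivityStatement}---delivers the strict inequality. Together with openness from the implicit function theorem and closedness from conic Monge-Amp\`ere a priori estimates in the style of Jeffres--Mazzeo--Rubinstein, this forces $\om_0$ to be the unique solution on $[0,1)$.

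The second step is openness at the endpoint $t=1$ modulo the $\Aut(X;D)$-action. Let $\om_1$ denote the other K\"ahler-Einstein cone metric. The linearisation at $t=1$ is $\Delta_{\om_1}+1$, whose real kernel is, by the generalized Matsushima theorem, canonically identified with the real parts of holomorphic vector fields in $\mathfrak{aut}(X;D)$. The reductivity, combined with an implicit function theorem ``with kernel''---a slice argument along the $\Aut(X;D)$-orbit through $\om_1$---should then produce, on some interval $[t_0,1]$ with $t_0<1$, a continuous family $\wt\om_{\vphi(t)}$ of solutions to the twisted equation satisfying $\wt\om_{\vphi(1)}=\sigma^*\om_1$ for a suitable $\sigma\in\Aut(X;D)$.

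Combining the two steps, the uniqueness on $[0,1)$ forces $\wt\om_{\vphi(t)}=\om_0$ for $t\in[t_0,1)$, and passing to the limit $t\to 1^-$ then yields $\sigma^*\om_1=\om_0$, which is the desired uniqueness up to automorphisms. The main obstacle will be analytic: every functional-analytic input---Fredholm theory and self-adjointness of the linearised operator, the slice construction along the $\Aut(X;D)$-orbit, the identification of the kernel with $\mathfrak{aut}(X;D)$, and the a priori estimates propagating the twisted path up to $t=1$---must be carried out in H\"older-type spaces adapted to the cone singularity along $D$, such as Donaldson's $C^{2,\a,\b}$ spaces. The most delicate point is promoting the abstract kernel identification from \thmref{reductivityStatement} to an honest action of $\Aut(X;D)$ on the space of solutions, so that the slice argument actually closes.
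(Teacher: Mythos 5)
Your overall plan coincides with the paper's: run the continuity path $Ric(\om_{\vphi(t)}) = t\om_{\vphi(t)} + (1-t)\om + 2\pi(1-\b)[D]$, prove openness and uniqueness on $[0,1)$ via a Bochner-type eigenvalue bound, establish conic a priori estimates, and handle the degenerate endpoint $t=1$ via a bifurcation argument built on the generalized Matsushima theorem. That much is correct.

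However, your treatment of the endpoint $t=1$ has a genuine gap, and it is precisely the step that Bando-Mabuchi identified as the crux. You describe the bifurcation as an ``implicit function theorem with kernel'' or ``slice argument along the $\Aut(X;D)$-orbit'' and express the worry that the kernel identification must be promoted to an honest group action. That is not the real obstruction. The obstruction is that, after Lyapunov--Schmidt reduction to the finite-dimensional kernel $H_\theta$ of $\Delta_\theta+1$, the restricted map $\Phi^\parallel(t,\cdot)$ vanishes \emph{identically} at $t=1$ (every point on the $\Aut$-orbit is a KE metric), so one must pass to $\tilde\Phi^\parallel = \Phi^\parallel/(t-1)$ and ask that its linearisation be invertible. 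As computed in Proposition~\ref{bifurcationat1}, that linearisation is exactly the Hessian $D^2E_\theta$ of the conic $I-J$ functional on $H_\theta$. A slice argument alone gives you nothing here; you need this Hessian to be \emph{strictly} positive definite. Minimizing $E(\cdot;\om)$ over the orbit (as in the proof of Lemma~\ref{propeetyofgauge}) only yields $D^2E_\theta\ge 0$, which is insufficient for the implicit function theorem. Bando-Mabuchi's remedy, which the paper follows, is to perturb the \emph{reference} metric to $\om_1^\eps=(1-\eps)\om+\eps\theta_1$, which changes $\la_\theta$ into $(1-\eps)\la_\theta$ and forces $D^2E^\eps_{\theta_1}(u,u) = (1-\eps)D^2E_{\theta_1}(u,u)+\eps\int u^2\theta_1^n>0$.

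This perturbation is incompatible with your specific simplification of taking $\om=\om_0$ to be one of the two K\"ahler-Einstein metrics: once you replace $\om_0$ by $\om_1^\eps$, the potential $\vphi\equiv 0$ is no longer the solution along the path, so you lose the tautological family that your concluding step relies on. The paper avoids this tension by keeping $\om$ a generic K\"ahler cone metric, performing the double perturbation to $\om_2^\eps$ to ensure strict Hessian positivity simultaneously at points near both orbits, and then running \emph{two} continuity paths from the \emph{same} reference $\om_2^\eps$ --- one to (a point near) $\theta_1\in\mathcal O_1$, one to $\theta_2\in\mathcal O_2$ --- concluding via uniqueness on $[0,1)$ that the two paths, and hence the two orbits, coincide. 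To repair your argument you would need either to establish strict Hessian positivity at the $E$-minimizer in $\mathcal{O}_1$ without perturbation (which is not automatic), or to abandon the $\om=\om_0$ simplification and adopt the symmetric two-path formulation.
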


The way to prove uniqueness is first to establish a continuity path connecting a general K\"ahler cone metric to our target, i.e. a K\"ahler-Einstein cone metric. 
The difficulties are to prove openness and closedness along the path in Donaldson's $C^{2,\a}_\b$ space: here openness on $[0,1)$ follows from a Bochner type formula with contradiction argument. Thus we are able to carry on the implicit function theorem on $[0,1)$ and the apriori estimates on $[0,\tau]$ for a small fixed $\tau>0$. 

Meanwhile, in order to prove closedness on $[\tau, 1]$, everything is boiled down to prove the zero order estimate (Section \ref{Zero order estimate title}) and the higher order estimates (Section \ref{Higher order estimate}). We first show that the zero order estimate of the continuity path on $[\tau, 1]$ requires only the uniform bound of the Sobolev constant, which is new even in the situation where all metrics are smooth. Then the Sobolev constant bound along the continuity path on $[\tau, 1]$ is proved by using an approximation of the continuity path. The approximation would have non-negative Ricci curvature and uniformly bounded diameter, which is an adaption of Theorem 1.1 in \cite{MR3264766} to our continuity path.

Finally, we need to establish a bifurcation technique (at $t=1$) under conic setting. 
In fact, this bifurcation technique for K\"ahler-Einstein cone metrics uses our generalized Matsushima's theorem. While, the computation of the second variation of the conic $I-J$ functional is more subtle than the smooth case.

\begin{rem}
The bifurcation method developed in Bando-Mabuchi \cite{MR946233} concerns the uniqueness of the smooth K\"ahler-Einstein metrics. Analogous result is Tian-Zhu \cite{MR1768112} in the context of K\"ahler-Ricci soliton.
\end{rem}

We would like to mention that 
our theorem generalises Bando-Mabuchi's result \cite{MR946233}, while 
fullfills the authors' projects \cite{Li1}\cite{Li2}\cite{Calamai-Zheng}\cite{Zheng}. The techniques built in this paper will be used in the sub-sequel papers on uniqueness of the constant scalar curvature K\"ahler metrics with cone singularities \cite{LZ2,LZ3}. 
In the beautiful work of Berndtsson \cite{MR3323577}, the uniqueness result for K\"ahler-Einstein cone metrics with normal crossing type divisors is proved. Our continuity method for cone metrics, togethor with an extension of Donaldson's $C^{2,\alpha}_{\beta}$ Schauder estimate for linear equations to normal crossing type divisors (which is believed to be true by many people), provides an alternative proof of Berndtsson's result. We also note that in the work of BBEGZ \cite{arXiv:1111.7158}, the uniqueness result was generalized to $klt$-pairs.


This continuity method approach indeed gives more geometric insights and simplified the proof on the equivalence between properness of the conic $Ding$-functional and the existence of K\"ahler-Einstein cone metrics, as a direct consequence of our method and estimates. 



\bigskip

$\mathbf{Acknowledgement}$: We are very grateful to Prof. Xiuxiong Chen. This paper was initiated during his invitation in Stony Brook in 2014. 
The first author would like to thank Prof. Mihai P\u aun, for pointing out a mistake in the first version.

The first author also wants to thank Prof. Ian Hambleton
and Prof. McKenzie Wang for their many supports. The second author would like to thank Prof. Bing Wang for his useful discussions.

This work was supported by the Engineering and Physical Sciences Research Council [EP/K00865X/1 to K.Z.]; and the European Commission [H2020-MSCA-IF-2015/703949(CFUC) to K.Z.].

\section{K\"ahler cone metrics}\label{skcp}
Let {$s$} be a global holomorphic section of $[D]$ and $h$ be a Hermitian metric on $[D]$. Once we are given a K\"ahler class $\Om$, we choose a smooth K\"ahler metric $\om_0$ in it.
{It
is shown in Donaldson} \cite{MR2975584} that, for sufficiently small $\delta>0$,
\begin{align}\label{model cone}
\om_D=\om_0+\delta \frac{\sqrt{-1}}{2} \p\bar\p |s|^{2\b}_{h}
\end{align}
is a K\"ahler cone metric.
Moreover, $\om_D$ is independent of the choices of $\om_0$, $h$, $\delta$ up to quasi-isometry. We call it \emph{model metric} in this paper.

The space of K\"ahler cone potentials $\mathcal H_{\b}$ consists of $\om_D$-psh functions of the K\"ahler cone metrics in $\Om$.




Now we present the function spaces which are introduced by Donaldson in
\cite{MR2975584}.
The H\"older space $C^{\a}_{\b}$  consists of those functions $f$
which are H\"older continuous with respect to a K\"ahler cone metric.
Note that according to this definition,
for any K\"ahler cone metric $\om\in C^\a_\b$, around the point $p\in D$,
we have a local normal coordinate such that $g_{ij}(p)=\delta_{ij}$.
\begin{defn}\label{defn: twice diff mixed derivatives functions}
The H\"older space $C^{2,\a}_{\b}$ is defined by
\begin{align*}
C^{2,\a}_{\b}
= \{f\; \vert \;  f, \p f, \p\bar\p f\in C_\b^{\a}\}\; .
\end{align*}
\end{defn}
Note that the $C^{2,\a}_\b$ space, since it concerns only with the mixed derivatives,
is different from the usual $C^{2,\a}$ H\"older space.
\subsection{Energy functionals}
Let $\om$ be a K\"ahler cone metric and $\omega_\vphi= \omega+i\ddbar\vphi$.
We denote the volume $V=\Om^n $.
The Aubin functions $I$ and $J$ could be defined on $\cH^{1,1}_\beta=C^{1,1}_\b\cap \cH_\b$  by
 \begin{align*}
I_\om(\vphi)&=\frac{1}{V}\int_{M}\vphi(\om^{n}-\om_{\vphi}^{n})
=\frac{\i}{V}\sum_{i=0}^{n-1}\int_{M}\p\varphi\wedge\bar\p\vphi\wedge\om^{i}\wedge\om_\vphi^{n-1-i},\\
J_\om(\vphi)&=\frac{\i}{V}\sum_{i=0}^{n-1}\frac{i+1}{n+1}
\int_{M}\p\vphi\wedge\bar\p\vphi\wedge\om^{i}\wedge\om^{n-1-i}_{\vphi}.
\end{align*} Note that the functionals $I$ and $J$  satisfy
the inequalities
\begin{align*}
\frac{1}{n+1}I\leq J\leq\frac{n}{n+1}I.
\end{align*}
The Lagrangian functional of the
Monge-Amp\`ere operator is
\begin{align}\label{i functional}
D_\om(\vphi)=\frac{1}{V}\int_M\vphi\om^n-J_\om(\vphi).
\end{align}
The derivative of $D_{\om}$ along a general path $\varphi_t\in\cH^{1,1}_\b$
is given by
\begin{align*}
\frac{d}{dt}D_\om(\vphi_t)=\frac{1}{V}\int_{M}\dot\vphi_t\,\om^n_{\vphi_t}.
\end{align*}
We could compute the explicit formula of $D_\om(\vphi)$ as the
following
\begin{align}
D_\om(\vphi)
&=\frac{1}{V}\sum_{i=0}^n\frac{n!}{(i+1)!(n-i)!}
\int_{M}\vphi\om^{n-i}\wedge(\pbp\vphi)^i\nonumber\\
&=\frac{1}{V}\int_M\vphi\om^n-\frac{\i}{V}\sum_{i=0}^{n-1}\frac{i+1}{n+1}
\int_{M}\p\vphi\wedge\bar\p\vphi\wedge\om^{i}\wedge\om_\vphi^{n-i-1}.
\label{eq:D}
\end{align}
Let $\cH_\b^0$ be the subspace of $\mathcal H_\b\cap C^{1,1}_\b$ with the normalization condition
$$\mathcal H_\b^0:=\{\vphi\in \mathcal H_\b\cap C^{1,1}_\b\vert D_\om(\vphi)=0\}.$$

\subsection{K\"ahler-Einstien cone metrics}




Recall that $D$ is a simple smooth divisor on $X$. We assume that  
the associated line bundle $L_D\geq 0$ is semi-positive, and the anti-canonical line bundle $-K_X$ can be decomposed into 
\[
-K_X = -(K_X + (1-\beta)L_D) +(1-\beta) L_D.
\] 
We further assume $-(K_X + (1-\beta)L_D) > 0$, and consider the cohomology class of 
\begin{align}\label{cohoclass}
\Om=-(K_X+(1-\beta)L_D).
\end{align}

Let $\mathcal{E}$ denote the space of all K\"ahler-Einstein cone metrics on $X$, with angle $2\pi\beta$ along the divisor $D$ and has $C^{2,\a}_\b$ K\"ahler cone potential. Assume that $\mathcal{E}$ is not empty, i.e. there exists a K\"ahler-Einstein cone metric $$\omega_\vphi= \omega+i\ddbar\vphi\in\mathcal{E},$$ with potential $\vphi\in C^{2,\alpha}_{\beta}$.
The background metric $\om$ is either a smooth K\"ahler metric $\om_0$ or the model metric $\om_D$.

Note that the K\"ahler cone potential of a K\"ahler-Einstein cone metric is $C^{1,1}_\b$, and indeed $C^{2,\a}_\b$ by the Evans-Krylov estimate of the K\"ahler-Einstein equation \eqref{keg} with Lemma \ref{f} (see Section \ref{Higher order estimate}).

We can choose $\phi_g$ as a metric (not a function!) of the $\bR$-line bundle $-(K_X+(1-\b)L_D)$ and write $$ \omega_\vphi = i\ddbar \phi_g. $$ The metric satisfies the following Monge-Amp\`ere equation: 
\begin{equation}\label{100}
(i\ddbar \phi_g)^n = e^{-\Phi},\ \ \ \ \int_X e^{-\Phi} = c_1(\omega)^n
\end{equation}
where $$\Phi = \phi_g + (1-\beta)\psi,$$ and $$\psi = \log |s|^2$$ is a positively curved singular hermitian metric (not a function!) on the line bundle $L_D$. Notice that the metric $\phi_{g}$ is in fact smooth on the regular part $M$, by applying the bootstrap method to the complex Monge-Amp\`ere equations. 

We furthermore discuss and write down the equivalent equations of \eqref{100}.
According to the cohomology condition, the metric $e^{-\Phi}$ is exactly a volume form. Hence equation \eqref{100} makes sense. 
Thanks to Poincar\'e-Lelong formula, we have 
$$i\ddbar\psi = 2\pi [D]. $$
Hence up to an normalization, equation \eqref{100} is equivalent to the following:
$$ -i\ddbar \log\omega_\vphi^n = i\ddbar\phi_g + 2\pi(1-\beta)[D]. $$
The two sides of this equation are globally defined, i.e. the equation which K\"ahler-Einstein cone metric satisfies.

Conversely, we are given a K\"ahler-Einstein cone metric which satisfies the equation of currents,
\begin{align}\label{ke}
Ric (\omega_{\vphi})=\om_\vphi+2\pi(1-\b) [D].
\end{align}
This equation implies the cohomology relation \eqref{cohoclass}. 
Using the smooth metric $\om_0$, we have the following equation from the cohomology relation \eqref{cohoclass},
\begin{equation}
\label{rew}
Ric(\omega_0) = \omega_0 + i\ddbar \Psi,
\end{equation}
where $\Psi/(1-\beta)$ is a smooth metric on the line bundle $L_D$. 
Put $$ \omega_0 = i\ddbar \phi_0,$$ and we have the following identity from equation (\ref{rew}).
$$ \exp(-\phi_{0} - \Psi) =  \omega_0^n,$$
then $$\phi_0=\phi_g -\varphi - \delta|s|^{2\beta}_h$$ is the metric for the K\"ahler form $\omega_0$.
Then combining \eqref{ke} and \eqref{rew}, we have 
\begin{equation}
\label{1002}
\frac{\omega_{\varphi}^n}{\omega_0^n} 
=e^{-\varphi -\delta|s|_h^{2\beta}+ \Psi-(1-\b)\psi}
= \frac{e^{-\varphi -\delta|s|_h^{2\beta}+ \Psi+(1-\b)\log h}}{|s|_h^{2-2\beta}}.
\end{equation}
Let $h_0$ be the smooth function $\Psi+(1-\b)\log h$.
In conclusion, under the smooth background metric $\om_0$, it becomes
\begin{align}\label{kegsmooth}
\frac{\omega_{\varphi}^n}{\omega_0^n} =\frac{e^{-\varphi -\delta|s|_h^{2\beta}+ h_0}}{|s|_h^{2-2\beta}}. \;
\end{align}
We denote $$\mathfrak f_0=-\log(|s|_h^{2(1-\b)})
-\delta|s|_h^{2\b}+h_0.$$
If we use $\om$ as the background metric, the K\"ahler-Einstein cone metric $\om_\vphi=\om+i\p\bar\p\vphi$ satisfies
\begin{align}\label{keg}
 \log \omega_{\vphi}^{n} = \log \omega^{n} -\vphi+\mathfrak f. \;
\end{align}
Here
\begin{align}\label{spgeneral}
\mathfrak f=\mathfrak f_0-\log\frac{\om^n}{\om^n_0}=-\log(\frac{\om^n}{\om^n_0}|s|_h^{2(1-\b)})
-\delta|s|_h^{2\b}+h_0.
\end{align}
In particular, one could choose the background K\"ahler cone metric to be the model metric $\om=\om_D$.
The estimates of $f$ defined by $\om_D$ are useful in the higher order estimates (see Lemma 4.1 in Calamai-Zheng \cite{Calamai-Zheng}). 
\begin{lem}\label{f}
$\mathfrak f\in C^{\a}_\b$ for any $0<\b\leq 1$ and $\a\leq\min\{\frac{2}{\b}-2,\frac{1}{\b} \}$.
$|\p\mathfrak f|_{\om_D}$ is bounded when $0<\b<\frac{2}{3}$.
\end{lem}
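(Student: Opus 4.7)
The plan is to decompose $\mathfrak f = \mathfrak f_1 + \mathfrak f_2 + h_0$, where
$\mathfrak f_1 = -\log\bigl(\tfrac{\om^n}{\om_0^n}|s|_h^{2(1-\b)}\bigr)$ and $\mathfrak f_2 = -\delta|s|_h^{2\b}$, and to analyze each piece in a local chart $(U;z^1,\dots,z^n)$ near a point $p\in D$ where $s=z^1$ and $h=e^{-\phi_h}$ with $\phi_h$ smooth. Since $h_0$ is smooth on $X$, it is in $C^\a_\b$ for all $\a$ and has bounded $\om_D$-gradient. For $\mathfrak f_2$: the function $|s|_h^{2\b}=|z^1|^{2\b}e^{-\b\phi_h}$ equals $\b^2 r^2 e^{-\b\phi_h}$ after passing to the cone radial variable $r=|z^1|^\b/\b$, which is $C^\infty$ in $r$; a direct computation gives $|\p\mathfrak f_2|^2_{\om_D}\sim|z^1|^{2\b}\to 0$ on $D$, so this term is harmless for both the H\"older estimate and the gradient bound at every angle.

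The main work is on $\mathfrak f_1$. A direct computation of the Hermitian matrix of $u:=|s|_h^{2\b}$ yields $u_{1\bar 1}=\b^2|z^1|^{2\b-2}e^{-\b\phi_h}+O(|z^1|^{2\b-1})$, $u_{1\bar j}=O(|z^1|^{2\b-1})$ for $j\geq 2$, and $u_{j\bar k}=O(|z^1|^{2\b})$ for $j,k\geq 2$. Expanding $\det(g_{0,i\bar j}+\delta u_{i\bar j})$ by Laplace along the first row and column, the dominant term is $\delta\b^2|z^1|^{2\b-2}e^{-\b\phi_h}\det(g_{0,j\bar k})_{j,k\geq 2}$; the off-diagonal cross-products contribute relative corrections of order $|z^1|^{2\b}$, while the replacement of $\delta u_{1\bar 1}$ by $g_{0,1\bar 1}+\delta u_{1\bar 1}$ contributes a relative correction of order $|z^1|^{2-2\b}$. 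One therefore obtains the factorization
\[ \frac{\om^n}{\om_0^n}=|z^1|^{2\b-2}\,G(z),\qquad G = C_0(z) + |z^1|^{2-2\b}C_1(z) + |z^1|^{2\b}C_2(z) + \Re\bigl(|z^1|^{2\b-1}\bar z^1\,C_3(z)\bigr)+\cdots \]
with $G$ positive and each $C_i$ smooth. Multiplying by $|s|_h^{2(1-\b)}=|z^1|^{2-2\b}e^{-(1-\b)\phi_h}$ cancels the singular prefactor, leaving a bounded positive function; hence $\mathfrak f_1 = -\log G - (1-\b)\phi_h$ modulo a smooth summand.

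Passing to the cone radial coordinate $r=|z^1|^\b$ (together with cone-Cartesian $w=z^1|z^1|^{\b-1}$), the correction monomial $|z^1|^{2-2\b}$ becomes $r^{2/\b-2}$, and the Taylor expansion of each $C_i(z)$ in $z^1$ produces terms $|z^1|^m=r^{m/\b}$, the critical case being $m=1$ with exponent $r^{1/\b}$. Translating to cone H\"older regularity, these slowest-decaying corrections yield the exponents $2/\b-2$ and $1/\b$, so $\mathfrak f_1\in C^\a_\b$ for every $\a\leq\min\{2/\b-2,\,1/\b\}$ (understood capped at $1$). For the gradient bound, using $g^{1\bar 1}_D\sim(\delta\b^2)^{-1}|z^1|^{2-2\b}$ one computes
\[ \bigl|\p(|z^1|^{2-2\b})\bigr|^2_{\om_D}\sim(1-\b)^2\,|z^1|^{-4\b+2}\cdot|z^1|^{2-2\b}=O\bigl(|z^1|^{4-6\b}\bigr), \]
which is bounded exactly when $\b\leq 2/3$, while every other monomial appearing in $G$ has $\om_D$-gradient vanishing on $D$ in this range. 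The main technical obstacle is the bookkeeping in the determinant expansion: one must verify that no hidden cross-term from the Laplace expansion violates the stated exponents, which is tedious but routine once the leading factor $|z^1|^{2\b-2}$ has been correctly identified and cancelled by the explicit weight $|s|_h^{2(1-\b)}$.
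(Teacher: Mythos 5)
The paper itself does not prove Lemma~\ref{f}: it is offered as a citation to Lemma~4.1 in Calamai-Zheng \cite{Calamai-Zheng}, so there is no internal proof here to compare against. What you have written is a from-scratch version of the direct local computation that presumably underlies the cited result, and your overall strategy is the right one: decompose $\mathfrak{f}$, dispose of $h_0$ and $\mathfrak f_2$ quickly, then compute $\om_D^n/\om_0^n$ by expanding the determinant of $g_0 + \delta\,\mathrm{Hess}(|s|_h^{2\b})$, identify and cancel the singular prefactor $|z^1|^{2\b-2}$ against $|s|_h^{2(1-\b)}$, and read off the cone-H\"older exponents from the subleading corrections of $G$. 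The exponents $2/\b-2$ and $1/\b$, and the gradient threshold $\b<2/3$ coming from $|\p(|z^1|^{2-2\b})|_{\om_D}\sim |z^1|^{2-3\b}$, all come out as stated.

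One piece of bookkeeping is off, though it happens not to affect the conclusion. You assert that the off-diagonal cross-products in the Laplace expansion contribute a \emph{relative} correction of order $|z^1|^{2\b}$. But the off-diagonal entries of $g_0 + \delta u$ still carry the $O(1)$ piece $g_{0,1\bar k}$ coming from the smooth reference metric, so a term such as $A_{1\bar k}A_{k\bar 1}\prod_{j\neq 1,k}A_{j\bar j}$ is generically $O(1)$ in absolute terms, i.e.\ of \emph{relative} order $|z^1|^{2-2\b}$ against the leading $|z^1|^{2\b-2}$, not $|z^1|^{2\b}$. (Normal coordinates at a single $p\in D$ only give $g_{0,1\bar k}=O(|z|)$, not $O(|z^1|)$, so these entries do not become negligible uniformly as $z^1\to 0$ with $z'$ varying.) This is harmless because the diagonal replacement $\delta u_{1\bar 1}\mapsto g_{0,1\bar 1}+\delta u_{1\bar 1}$ already produces a relative correction of the same order $|z^1|^{2-2\b}$; the expansion $G = C_0 + |z^1|^{2-2\b}C_1 + \cdots$ therefore keeps its form, with $C_1$ merely absorbing the extra off-diagonal contribution, and the H\"older and gradient estimates are unchanged. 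Aside from this, the argument is sound.
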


\begin{rem}
The lemmas above follow for all normal crossing divisors $D$.
\end{rem}

\section{The automorphism group is reductive}\label{sec01}

Now let's call $\text{Aut}(X,D)$ as the set of all holomorphic automorphisms of $X$, which fix the divisor $D$.
And assume $G$ is the identity component of $\text{Aut}(X,D)$. Let $\mathfrak{g}$ be the space of all holomorphic vector fields on $X$ tangential to $D$. 
Fix a $C^{2,\alpha}_{\beta}$ cone metric $\theta$, and then 
we can consider its isotropy group $K_{\theta}$ of $G$. The $G$-orbit $\mathcal{O}$ through $\theta$ in $\mathcal{E}$ can be written as
$$\mathcal{O} \cong G/K_{\theta}.$$
Take $\mathfrak{k}_{\theta}$ to be the set of all Killing vector fields on $X$ with respect to $\theta$, and $\mathfrak{k}_{\theta}$ is the Lie sub-algebra of $\mathfrak{g}$
corresponding to $K_{\theta}$ in $G$. Our goal is to prove the following:

\begin{theorem}\label{reductivityStatement}
\label{a}
Let $$H_{\theta}: = \{\varphi\in  C^{2,\alpha}_{\beta} \cap C^{\infty}(M) | \ \Delta_{\theta} \varphi = -\varphi \},$$ 
where $\Delta_{\theta}$ is the geometric Laplacian, and $M$ is the complement of $D$ on $X$.
Set $\mathfrak{p}_{\theta}: =\sqrt{-1}\mathfrak{k}_{\theta}$, and 
$H_{\theta}^{\mathbb{C}}: = H_{\theta}\otimes_{\mathbb{R}} \mathbb{C}$. Then 
\begin{enumerate}
\item[(i)] $\mathfrak{k}_{\theta} = \{ Y_{\varphi,\theta}| \varphi\in \sqrt{-1} H_{\theta} \}$ and $\mathfrak{p}_{\theta} = \{Y_{\varphi, \theta}| \varphi\in H_{\theta} \}$.\\

\item[(ii)] $\varphi\in H_{\theta}^{\mathbb{C}}\rightarrow Y_{\varphi,\theta} \in \mathfrak{g}$ defines an isomorphism and hence 
$$\mathfrak{g} = \mathfrak{k}_{\theta}\oplus \mathfrak{p}_{\theta} . $$
\end{enumerate}
\end{theorem}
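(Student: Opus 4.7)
The plan is to adapt the classical Matsushima argument to the cone setting, using Kodaira-H\"ormander $L^2$ techniques in place of the Kodaira-Bochner formula (which is not presently known for K\"ahler cone metrics). The core task is to construct, for each holomorphic vector field $Y\in\mathfrak g$ tangent to $D$, a complex potential $\varphi\in H_\theta^{\bC}$ with $Y = Y_{\varphi,\theta} = \Na^{1,0}\varphi$, and then to split this correspondence along real and imaginary parts; the reverse direction (from $\varphi\in H_\theta^{\bC}$ to a holomorphic $Y$ tangent to $D$) is the easier half and follows by unwinding the same computation.

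First I would manufacture the potential. On $M$, the $(0,1)$-form $\eta_Y = \overline{Y\lrcorner\theta}$ is $\bar\p$-closed by holomorphy of $Y$, and the tangency of $Y$ to $D$ should make it $L^2(\theta)$-integrable across the divisor. I would then solve $\bar\p\psi = \eta_Y$ by H\"ormander-type $L^2$ existence on $(X,\theta)$, the required positivity being furnished (distributionally) by the K\"ahler-Einstein identity $\Ricci(\theta) = \theta + 2\pi(1-\b)[D]$ --- or, more robustly, by a smoothing approximation $\theta_\ep$ with uniform Ricci lower bound followed by a limiting argument in $L^2$. Setting $\varphi = \bar\psi$ and normalizing $\int_X\varphi\,\theta^n = 0$, interior elliptic regularity on $M$ gives smoothness there, and Donaldson's $C^{2,\a}_\b$ Schauder theory for linear equations, applied to the resulting scalar equation, upgrades $\varphi$ to $C^{2,\a}_\b$ globally.

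Next, I would verify the eigenvalue equation $\Delta_\theta\varphi + \varphi = 0$. On $M$ this is the standard commutation: computing $\p\bar\p\varphi$ and using the smooth K\"ahler-Einstein identity $\Ricci(\theta) = \theta$ together with the holomorphy of $Y = \Na^{1,0}\varphi$, one obtains $\p\bar\p\varphi + \varphi\,\theta = c\,\theta$ on $M$; tracing yields the scalar equation with the constant $c$ absorbed by the normalization. Writing $\varphi = u + \sqrt{-1}v$ with $u,v$ real, both lie in $H_\theta$ since $\Delta_\theta + 1$ is $\bR$-linear. A direct computation on $M$ then shows: for real $v$ satisfying $\Delta_\theta v + v = 0$, the Hamiltonian vector field $J\Na v$ preserves $\theta$, so $Y_{\sqrt{-1}v,\theta}$ represents an element of $\mathfrak k_\theta$; analogously $Y_{u,\theta}$ represents an element of $\sqrt{-1}\mathfrak k_\theta = \mathfrak p_\theta$. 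The decomposition $\mathfrak g = \mathfrak k_\theta\oplus\mathfrak p_\theta$ follows, and injectivity of $\varphi\mapsto Y_{\varphi,\theta}$ is automatic: $\Na\varphi\equiv 0$ combined with the eigenvalue equation and the normalization force $\varphi\equiv 0$.

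The main obstacle is the first step. The absence of a globally valid Bochner-Kodaira identity on $(X,\theta)$ means the positivity needed to invert $\bar\p$ in $L^2$ has to be extracted indirectly, most plausibly through the smooth approximation $\theta_\ep\to\theta$ with controlled Ricci lower bounds and a delicate passage to the limit. Equally subtle is the regularity part of Step 1: one must show the $L^2$ solution $\psi$ has enough decay and regularity at $D$ to apply Donaldson's $C^{2,\a}_\b$ Schauder theory, to justify the integration-by-parts identities in Steps 2-3, and --- most critically --- to ensure that $Y_{\varphi,\theta}$, defined a priori only on $M$, extends as a genuine holomorphic vector field on the whole of $X$ that is tangent to $D$ rather than merely meromorphic along it. This extension-across-$D$ question, which bridges the analytic $L^2$ solution on the regular locus with the global algebraic requirement of membership in $\mathfrak g$, is where I expect the bulk of the technical effort to concentrate.
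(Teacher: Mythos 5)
Your overall framework is correct---Kodaira--H\"ormander $L^2$ techniques in place of the missing Bochner--Kodaira identity, then regularity via Donaldson's $C^{2,\alpha}_\beta$ Schauder theory---and this does match the paper's strategy. However, you have the distribution of difficulty exactly reversed, and this is a genuine gap rather than a matter of emphasis.

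You treat the \emph{reverse} direction (given a real eigenfunction $u\in H_\theta$, show $Y_{u,\theta}=\uparrow^\theta\dbar u$ is holomorphic) as ``the easier half'' that ``follows by unwinding the same computation,'' and you assert that ``a direct computation on $M$'' shows the associated Hamiltonian vector field preserves $\theta$. This is wrong: holomorphy of $\uparrow^\theta\dbar u$ is precisely what must be proved, and it is not a pointwise fact---it classically requires the integrated Bochner identity $\int|\dbar Y|^2 = \lambda(\lambda-1)\int|u|^2$, which is the very formula that is unavailable across $D$. The paper spends the bulk of its effort here: it introduces the logarithmic cut-off $\rho_\ep = \eta(\ep\log(-\log|s|^2_h))$, establishes a Bochner-type identity (in the style of Siu/Berndtsson) valid for $(n,1)$-forms vanishing in a neighborhood of $D$, and then proves, via delicate estimates, that all the cut-off error terms ($\|\partial\rho_\ep\wedge v_2\|_{L^2}$ and $\|\dbar\rho_\ep\wedge\dbar u_2\|_{L^2}$) converge to zero as $\ep\to 0$. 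Without this cut-off machinery your argument does not close: you cannot integrate the pointwise identity across $D$, because the growth of the third derivatives of the potential near $D$ is a priori unknown (not even $L^2$). This is the step where the paper notes ``the true obstruction is that we don't know the growth of $\dbar v_2$ near the divisor.''

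Conversely, the \emph{forward} direction, which you flag as the hard step, is handled rather directly in the paper. It does not smooth the metric $\theta$ and pass to a limit: it works throughout with the singular weight $\Phi = \phi_g + (1-\beta)\log|s|^2$ viewed as a positively curved singular hermitian metric on $-K_X$, so that $i\ddbar\Phi = \omega_g + (1-\beta)\delta_D$ is a positive current, and a singular-weight version of H\"ormander's estimate applies immediately once one verifies $f\wedge\ol f\leq H\,i\ddbar\Phi$ in the sense of measures. Tangency of $Y$ to $D$ enters precisely to guarantee $f\in L^2_{loc}$ and $H\in L^\infty_{loc}$. The only smoothing that appears in the forward direction is a smoothing $\Phi_\ep$ of the \emph{weight} (not the metric), used in a separate lemma to show the additive constant in $\partial^\Phi v = u + C$ vanishes under the natural normalization. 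Your proposed metric smoothing $\theta_\ep\to\theta$ is a more cumbersome route for this direction; and while your instinct that ``the extension-across-$D$ question'' is central is sound, it actually arises in the reverse direction (once $\dbar v_2 = 0$ is established on $M$ in $L^2$, Riemann extension gives a global holomorphic field tangent to $D$), not the forward one, where $Y\in\mathfrak g$ is given globally from the start.
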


In order to prove above theorem, it is enough to prove the following two statements: first, given a holomorphic vector field $v$ tangential to $D$, 
we can create a corresponding element $$u\in H^{\mathbb{C}}_{\theta};$$ second, given an element $u_2\in H_{\theta}$, we can induce a holomorphic 
vector field $v_2\in\mathfrak{g}$ from $u_2$. We will prove the first statement by solving a $\dbar$ equation, and the second statement is proved by applying 
a Bochner-Kodaira type formula.

\subsection{Solving $\dbar$ equation}
We clarify our notations again.
Let $$\omega_g:=\omega_{\vphi_g}=\omega + i\p\bar\p\varphi_g,$$ be a K\"ahler-Einstein cone metric with angle $2\pi\beta$ along $D$, with potential $\vphi_g$ in $ C^{2,\alpha}_{\beta}$. 

Suppose 
$v$ is a holomorphic vector field on $X$ in $T^{0,1}(X)$, or equivalently, a holomorphic $(n-1,0)$ form with value in $-K_X$. We define $(n,1)$-form with value in $-K_X$ as $$f:=\omega_{g}\wedge v,$$ and consider the equation:
\begin{equation}\label{101}
f = \dbar u.
\end{equation}
In general, it's not easy to handle equation (\ref{101}), even in the $L^2$ sense. However, we have the following proposition when $v$ is tangential to the divisor.
First we claim that $f$ is a closed $(0,1)$-current on $X$.
\begin{lemma}
\label{lem-1}
The $(0,1)$-current $f=\omega_g\wedge v$ is $\dbar$ closed.
\end{lemma}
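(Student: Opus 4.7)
The plan is to reduce $\bar\partial f = 0$ to a Hodge-type decomposition of the Lie derivative $L_v \omega_g$, via Cartan's magic formula. Under the natural isomorphism $\Omega^{n,1}(X) \otimes (-K_X) \cong \Omega^{0,1}(X)$ that identifies the vector field $v$ with an $(n-1,0)$-form valued in $-K_X$, the object $f = \omega_g \wedge v$ corresponds to the contraction $\iota_v \omega_g$. Since $v$ is smooth (it is a holomorphic section of $TX$) and $\omega_g$ is a well-defined $(1,1)$-current on $X$, this contraction is a well-defined $(0,1)$-current, and it suffices to show $\bar\partial(\iota_v \omega_g) = 0$ as a current.

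First I would write $\omega_g = \omega_0 + i\partial\bar\partial \phi_g$ for a smooth background K\"ahler metric $\omega_0$ and the K\"ahler-Einstein cone potential $\phi_g \in C^{2,\alpha}_\beta$. Because $v$ is a holomorphic $(1,0)$ vector field, $L_v$ preserves Hodge types and commutes separately with $\partial$ and $\bar\partial$. As currents,
\[
L_v \omega_g = L_v \omega_0 + i\partial\bar\partial\bigl(v(\phi_g)\bigr),
\]
where $v(\phi_g) = v^j \partial_j \phi_g$ is a bounded function (since $\partial \phi_g \in L^\infty$), so $i\partial\bar\partial(v(\phi_g))$ is a well-defined $(1,1)$-current. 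Hence $L_v \omega_g$ is a current of pure Hodge type $(1,1)$.

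Next, because $\omega_g$ is $d$-closed as a current and $v$ is smooth, Cartan's magic formula gives
\[
L_v \omega_g \;=\; d(\iota_v \omega_g) \;=\; \partial(\iota_v \omega_g) + \bar\partial(\iota_v \omega_g),
\]
where the two summands are of types $(1,1)$ and $(0,2)$ respectively. Equating this with the pure $(1,1)$-current from the previous step forces the $(0,2)$-part to vanish, i.e.\ $\bar\partial(\iota_v \omega_g) = 0$, which translates back to $\bar\partial f = 0$ for $f$ viewed as a $(n,1)$-form with values in $-K_X$.

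The main obstacle will be to justify these current-theoretic identities rigorously: Cartan's formula and the commutations $L_v \circ \bar\partial = \bar\partial \circ L_v$ against a current whose potential is only $C^{1,1}_\beta$. These follow by duality against smooth test forms, using that $v$ is smooth so that its flow preserves test forms. A hands-on alternative that bypasses the current calculus altogether is to choose cutoffs $\chi_\epsilon$ vanishing in an $\epsilon$-tube around $D$ and equal to $1$ outside a $2\epsilon$-tube, integrate $\int_X f \wedge \bar\partial \eta$ by parts on the regular locus $M$ (where $\bar\partial f = 0$ pointwise), and estimate the residual boundary integral. Here the tangentiality $v^1 = O(|z^1|)$ is essential: combined with $g_{1\bar 1} \sim |z^1|^{2\beta-2}$ it makes the singular component of $\iota_v \omega_g$ only $O(|z^1|^{2\beta-1})$, and together with $|\bar\partial \chi_\epsilon| \lesssim \epsilon^{-1}$ on a shell of Euclidean area $\sim \epsilon^2$, the boundary error is $O(\epsilon^{2\beta}) \to 0$. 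Without tangentiality the argument would fail at both levels.
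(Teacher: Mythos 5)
Your Lie-derivative argument is correct and takes a genuinely different route from the paper's. The paper regularizes: it convolves the potential, $\varphi_{g,\epsilon}=\chi_\epsilon\star\varphi_g$, observes that the smooth closed $(1,1)$-form $\omega_{g,\epsilon}$ wedged with the holomorphic $v$ is $\bar\partial$-closed pointwise, and passes to the limit by weak convergence against the fixed test form $v\wedge\bar\partial W$. Your argument instead uses that for a holomorphic $(1,0)$ vector field $v$ the Lie derivative $L_v$ preserves Hodge bidegree, which is equivalent to the anti-commutation identity $\bar\partial\iota_v+\iota_v\bar\partial=0$. Since $v$ is smooth this identity passes to currents by duality, and combined with the $\bar\partial$-closedness of the $(1,1)$-current $\omega_g$ it gives $\bar\partial(\iota_v\omega_g)=-\iota_v(\bar\partial\omega_g)=0$ directly; you can in fact skip the Cartan-formula detour entirely. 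Both this route and the paper's need no tangentiality hypothesis: tangentiality enters only later, in Proposition~\ref{dbar}, to control the $L^2$ weight in the H\"ormander estimate.

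Two points to fix. The justification ``$\partial\phi_g\in L^\infty$'' is false in general: the model term $\delta|s|_h^{2\beta}$ in the potential contributes $\partial\phi_g\sim|z^1|^{2\beta-1}$, which is unbounded when $\beta<1/2$. Your argument only needs $v(\phi_g)\in L^1_{loc}$ to make sense of $i\partial\bar\partial\,(v(\phi_g))$ as a current, and $|z^1|^{2\beta-1}$ is locally integrable for every $\beta>0$, so the argument survives once the reason is stated correctly. Also, your hands-on cut-off alternative genuinely requires tangentiality: without it the boundary term is only $O(\epsilon^{2\beta-1})$, which does not vanish for $\beta\le 1/2$, so that route proves strictly less than the lemma. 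Since neither the regularization proof nor your Lie-derivative proof needs tangentiality here, the cut-off technique is better reserved for the later integration-by-parts arguments in the paper where tangentiality really is used.
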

\begin{proof}
It's enough to check the following: let $U$ be an open neighborhood around a point $p\in D$, 
for any smooth $(0,n-2)$ form $W$ such that $\text{supp} W \Subset U$, we have 
$$\int_X \omega_g\wedge v\wedge \dbar W =0.  $$
The convolution $\varphi_{g,\ep} = \chi_{\ep} \star \varphi_g $ converges uniformly to $\varphi_g$ locally. Hence we have weak convergence as 
$$\int_X \omega_{g, \ep}\wedge (v\wedge \dbar W)  \rightarrow \int_X \omega_g \wedge (v\wedge \dbar W). $$ By integration by parts, we have 
$$ \int_X \dbar(\omega_{g,\ep} \wedge v)\wedge W = 0,$$ for each $\ep$. And the result follows.
 
\end{proof}

We denote $W^{1,2}(\om_0)$ the $W^{1,2}$ Sobolev space with respect to the smooth K\"ahler metric $\om_0$.

\begin{proposition}\label{dbar}
Suppose the holomorphic vector field $v$ is tangential along $D$. Then there exists a function $u\in C^{\infty}(M)\cap W^{1,2}(\om_0)$, such that $u$ solves equation (\ref{101}), and the following estimate holds: 
\begin{equation}\label{102}
\int_X |u|^2 e^{-\Phi} \leq \int_X H e^{-\Phi},
\end{equation}
where $H = |f|_{i\ddbar\phi_g}$ is the $L^2$ norm of $f$ under the metric $\omega_g$. 
\end{proposition}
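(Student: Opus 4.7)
The plan is to solve (\ref{101}) via a H\"ormander-Kodaira $L^2$ estimate, viewing $f = \om_g \wedge v$ as a $\bar\p$-closed $(n,1)$-form with values in the anti-canonical bundle $L = -K_X$ equipped with the singular Hermitian metric $e^{-\Phi}$, where $\Phi = \phi_g + (1-\beta)\psi$. The essential input is the curvature identity: combining the K\"ahler-Einstein equation $Ric(\om_g) = \om_g + 2\pi(1-\beta)[D]$ with the Poincar\'e-Lelong formula $i\p\bar\p \psi = 2\pi[D]$ yields
$$ i\p\bar\p \Phi = \om_g + 2\pi(1-\beta) [D] \geq \om_g $$
as $(1,1)$-currents on $X$. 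This is exactly the positivity hypothesis $\Theta(L, e^{-\Phi}) \geq \om_g$ that produces the constant $1$ on the right-hand side of (\ref{102}).

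Before running the estimate, I would first verify integrability of the data: because $v$ is tangential to $D$, in a local chart with $D = \{z^1 = 0\}$ we may write $v^1 = z^1\,\tilde v^1$ for some local holomorphic $\tilde v^1$, so that $|v^1|^2 |z^1|^{2(\beta-1)}$ stays bounded and $|v|_{\om_g}$ extends to a bounded function across $D$; combined with $\int_X e^{-\Phi} = \int_X \om_g^n < \infty$ this gives $\int_X H e^{-\Phi} < \infty$. Next, since $\om_g$ is not complete on $M = X\setminus D$ and $e^{-\Phi}$ is singular along $D$, the classical H\"ormander theorem does not apply directly. I would handle this by regularization: smooth the cone potential $\vphi_g$ to $\vphi_\ep$ so that $\om_\ep = \om_0 + i\p\bar\p \vphi_\ep$ is a genuine K\"ahler form on $X$, and correspondingly smooth $\Phi$ to $\Phi_\ep$ while preserving $i\p\bar\p \Phi_\ep \geq \om_\ep$ up to a vanishing error. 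On the compact manifold $X$ one solves $\bar\p u_\ep = f$ with the classical bound
$$ \int_X |u_\ep|^2 e^{-\Phi_\ep} \leq \int_X |f|^2_{\om_\ep} e^{-\Phi_\ep}, $$
extracts a weak limit $u_\ep \rightharpoonup u$ in $L^2(X, e^{-\Phi})$, and checks that $\bar\p u = f$ in the distributional sense together with the estimate (\ref{102}).

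Finally, I would establish the regularity claims. Interior smoothness $u \in C^\infty(M)$ is immediate from elliptic regularity applied to $\bar\p u = f$ on the regular part, since $f$ is smooth there. For the $W^{1,2}(\om_0)$ statement, note that near $D$ we have $\om_g^n \sim |s|_h^{-2(1-\beta)} \om_0^n$, hence $e^{-\Phi} \geq C \om_0^n$, so the weighted $L^2$ bound forces $\int_M |u|^2 \om_0^n < \infty$; meanwhile $|\bar\p u|^2_{\om_0} = |\om_g \wedge v|^2_{\om_0}$ remains bounded near $D$ because the conic factor $|s|^{2(\beta-1)}$ in $\om_g$ is neutralized by the vanishing of $v^1$ along $D$, giving $\bar\p u \in L^2(\om_0)$ and thus $u \in W^{1,2}(\om_0)$.

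The main obstacle will be the regularization step, because it must be simultaneously compatible with the conic nature of $\om_g$ and with the singular weight $e^{-\Phi}$: the inequality $i\p\bar\p \Phi \geq \om_g$ is sharp (the excess is precisely the current $2\pi(1-\beta)[D]$), so one has to smooth $\Phi$ without losing the lower bound. A cleaner alternative, avoiding regularization altogether, would be to work directly on the open K\"ahler manifold $M$ after replacing $\om_g$ by a complete auxiliary K\"ahler metric $\widetilde\om$ on $M$ (for instance $\widetilde\om = \om_g + i\p\bar\p\bigl(-\log(-\log |s|_h^2)\bigr)$), exploiting Demailly's observation that the H\"ormander estimate for $(n,1)$-forms with values in $L$ depends only on the curvature lower bound of $\Phi$ and not on the particular K\"ahler metric chosen to secure completeness.
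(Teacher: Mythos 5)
Your overall strategy agrees with the paper's on the two essential inputs: the curvature inequality $i\ddbar\Phi = \om_g + 2\pi(1-\beta)[D] \geq \om_g$ as currents, and the fact that tangency of $v$ to $D$ makes $v^1$ vanish along $D$, keeping $H = |f|^2_{\om_g}$ bounded and hence the right-hand side of \eqref{102} finite. The technical route, however, is genuinely different. The paper invokes B{\l}ocki's formulation of the H\"ormander estimate (\cite{Blocki}), which is stated directly for a singular weight and a degenerate metric on a projective manifold and asks only that $f\in L^2_{loc}$, $H\in L^\infty_{loc}$, and $f\wedge\ol f \leq H\,i\ddbar\Phi$ hold in the sense of currents of order zero. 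The tangency of $v$ is then used precisely to verify these local conditions through the growth rates $f_{\ol 1}\sim r^{2\beta-1}+r^{\beta-1}$ and $f_{\ol k}\sim r^{\beta}$ ($k>1$), and no regularization or auxiliary complete metric is needed. Your primary route --- mollifying $\varphi_g$ and $\Phi$ simultaneously --- is the one you yourself flag as problematic, and justly so: the excess in $i\ddbar\Phi-\om_g$ is exactly the singular current $2\pi(1-\beta)[D]$ supported where a smoothing destroys positivity, so it is not at all clear that one can keep $i\ddbar\Phi_\ep\geq\om_\ep$ while making $\om_\ep$ a genuine K\"ahler form. Your Demailly-style fallback with a complete metric on $M$ is closer to a workable alternative, but the paper's use of B{\l}ocki's singular version is cleaner and is, in effect, the shortcut both of your suggestions are circling around.

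One concrete error in the regularity discussion: you claim $|\dbar u|^2_{\om_0} = |\om_g\wedge v|^2_{\om_0}$ ``remains bounded near $D$.'' It does not. From the growth rates above, $|\dbar u|^2_{\om_0}=\sum_j|f_{\ol j}|^2 \sim r^{2\beta-2}$, which diverges as $r\to 0$ for $\beta<1$; the mixed metric components $g_{j\ol 1}\sim r^{\beta-1}$ contributed by a general $C^{2,\alpha}_\beta$ potential are not neutralized by the vanishing of $v^1$. What survives is the weaker but sufficient statement that $r^{2\beta-2}$ is integrable against $r\,dr$, hence $\dbar u\in L^2(\om_0)$, which is all that $u\in W^{1,2}(\om_0)$ requires.
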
 
\begin{proof}
We can write $v = X^i d\hat{z}^i$ locally, where $d\hat{z}^i$ is an $(n-1,0)$ form defined by $$dz^i\wedge d\hat{z}^i := dz^1\wedge\cdots\wedge dz^n=dZ,$$ and $X^i$ is a holomorphic function with value in $-K_X$. Then $$\omega_g\wedge v = (X^{\alpha}g_{\alpha\ol\beta}) d\bar{z}^{\beta}\wedge dZ$$ is an 
$(n,1)$ form with value in $-K_X$ (note that those coefficients may differ by a sign, but we ignore this problem here since we only concern about $L^p$ norms).

Notice that, from Lemma \ref{lem-1}, $\dbar f =0$ on $X$ shows that $$f=\omega_g\wedge v$$ is a $\dbar$ closed $(0,1)$ form, and $X$ is in fact a projective manifold by the ampleness of $-K_X$. Then the result follows from a slightly general version \cite{Blocki} of H\"ormander's $L^2$ estimate \cite{Bo}, and it's enough to check two things: $f$ is in $L^2_{loc,(0,1)}$ and 
$H\in L^{\infty}_{loc}$ satisfies
\begin{equation}\label{1001}
f\wedge\ol f \leq H i\ddbar \Phi, 
\end{equation}
 in the sense of currents of order zero (measure coefficients).

These conditions are true thanks to the vanishing of the orthogonal direction of $v$ near the divisor. In fact, we can decompose $X^1 = s\cdot h$ near the divisor $D$, where $D=\{s=0\}$ and $h$ is a local holomorphic function. Then we can check the growth order of $f$ near $D$ as:
\begin{eqnarray}\label{103}
f_{\ol1} &=& X^1g_{1\ol1} + \sum_{j>1}X^j g_{j\ol1}\nonumber\\
&\sim& r^{2\beta -1} + r^{\beta -1},
\end{eqnarray}
and for $k>1$ 
\begin{eqnarray}\label{104}
f_{\ol k} &=& X^1g_{1\ol k} + \sum_{j>1}X^j g_{j\ol k}\nonumber\\
&\sim&  r^{\beta },
\end{eqnarray}
where $|z^1|=r$. Hence we have $f\in L^2_{loc}$ and $H\in L^{\infty}_{loc}$, since $$|f|^2_{\omega_g} \leq C |f|^2_{\omega_{D}},$$
where $\omega_{D}$ is the model cone K\"ahler metric, and the latter is bounded since $$r^{2-2\beta}|f_{\ol 1}|^2\sim O(1)\text{ and 
}|f_{\ol k}|^2\sim O(1).$$ Finally notice that $i\ddbar\Phi$ can be written as $$i\ddbar\Phi = i\ddbar \phi_g + (1-\beta)\delta_D,$$ where $\delta_D$ is the integration 
current of $D$. Therefore we can establish the inequality:
\begin{equation}\label{105}
f\wedge\ol f \leq H i\ddbar{\phi_g} = H i\ddbar\Phi,
\end{equation}
on $M$ by definition of $H$. However, the coefficients of $f\wedge \ol f$ has no mass on the divisor $D$ since $f$ is $L^2_{loc}$. Hence inequality (\ref{104}) actually holds on the whole $X$.  

\end{proof}

\begin{rem}\label{hormander}
In fact, we can solve the $\dbar$ equation (\ref{101}) with estimate (\ref{102}) under even weaker conditions, provided that inequality (\ref{1001}) still holds in the sense of complex measure coefficients positive $(1,1)$ currents, and the integral on the RHS of equation (\ref{102}) is finite. 
\end{rem}

Next let's consider the complex Laplacian operator $\Box_g$ defined with respect to the K\"ahler-Einstein cone metric $\omega_g$. It can be written as 

$$ \Box_g u :=  -g^{\ol\beta\alpha}\frac{\partial^2 u}{\partial z^{\alpha}\partial\ol z^{\beta}}=-\tri_{\om_g}, $$
 in a local coordinate system. It certainly makes sense to define it outside of the divisor $D$, and it also makes sense across the divisor when $u$ is merely in $ C^{2,\alpha}_{\beta}$.

 Now we can look at this operator in a different view of point. We are given a $C^{\a}_\b$ K\"ahler cone metric $\om$.  We say a form  $f$ in $L^2(\om,\Phi)$, if $$\int_X |f|^2_\om\cdot e^{-\Phi} < +\infty.$$

Define $\dbar$ operator as a closed, densely defined operator between two Hilbert spaces, with closed range property. That is to say $$\dbar: L^2_{(n,0)}(\omega, \Phi) \dashrightarrow L^2_{(n,1)}(\omega, \Phi), $$ where $\Phi$ is viewed as a positively curved singular Hermitian metric on the anti-canonical line bundle $-K_X$. Then there exists its adjoint operator 

$$\dbar^*_{\Phi,\omega}: L^2_{(n,1)}(\omega, \Phi) \dashrightarrow L^2_{(n,0)}(\omega, \Phi),$$ 
which is also a closed, densely defined operator with closed range. However, there is another way to define the formal adjoint operator of $\dbar$, by doing integration by parts in local coordinate systems.

It can be written as, for any $-K_X$ valued $(n,1)$ form $f$,
$$ \vartheta f = \partial^{\Phi}(\omega\lrcorner f ), $$ 
in the distributional sense, and the operator $\partial^{\Phi}$ is defined as 
\begin{equation}\label{dphi}
\partial^{\Phi}\cdot := e^{\Phi}(\partial e^{-\Phi}\cdot) = \partial - \partial\Phi\wedge\cdot.
\end{equation}
It's standard to show $\dbar^*_{\Phi,\omega} = \vartheta$ on the domain of $\dbar^*_{\Phi,\omega}$. Therefore we can abuse them and define the other second order elliptic operator as 
\begin{equation}\label{106}
\Box_{\Phi,\omega} u := \dbar^*_{\Phi,\omega}\dbar u = \partial^{\Phi}(\omega\lrcorner \dbar u). 
\end{equation}

If we put the metric $\omega = \omega_g$, then a quick observation \cite{Li1} is that 
these two operators $\Box_{g}$ and $\Box_{\Phi,\omega_g}$ coincides with each other on $M$. 
Hence we can translate the Laplacian equation 
 into two first order equations:
\begin{equation}\label{107}
\left\{ \begin{array}{rcl}
\omega_g\wedge v &=& \dbar u\\
\partial^{\Phi}v &=& \Box_g u,
\end{array}\right.
\end{equation}
where in prior, $v$ is a vector field on $M$. 

However, the operator $\Box_{\Phi,\omega_g}$ is not quite well defined as a global operator, since it's not clear that $\dbar$ operator has closed range in the $L^2$ space with singular metric $\omega_g$ (it's proved by the Bochner technique, which involves one derivative of the metric $\omega_g$). The key observation here is that the operator $\partial^{\Phi}$, defined in equation (\ref{dphi}), is independent of the metric $\omega$. 
Then it still makes sense to talk about the system of differential equations like (\ref{107}) on the whole manifold $X$ in the current sense, and we are going to consider it in a very special circumstance. 
\begin{lemma}\label{2eq}
Under the same conditions in Proposition \ref{dbar}, the following equation holds on $X$:
\begin{equation}\label{1075}
\partial^{\Phi}v = u + C,
\end{equation}
where $C$ is some normalization constant. In particular, the function $u$ is in $C^{\bar 1,\alpha}_{\beta}$, i.e. $u\in  C^{\alpha}_{\beta}$ and $\dbar u\in  C^{\alpha}_{\beta}$. 
\end{lemma}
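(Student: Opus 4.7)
The plan is to first establish the identity on the smooth locus $M$ as a consequence of the classical Matsushima computation, and then to extend it to all of $X$ as an identity of currents by verifying that no delta-type mass is produced along $D$. On $M$, the system (\ref{107}) gives $\partial^{\Phi} v = \Box_g u$: indeed, $\dbar u = \omega_g\wedge v$ implies $\Box_g u = \dbar^{*}_{\Phi,\omega_g}\dbar u = \partial^{\Phi}(\omega_g\lrcorner(\omega_g\wedge v)) = \partial^{\Phi} v$, using the trace identity $\omega_g\lrcorner(\omega_g\wedge v) = v$ for an $(n-1,0)$-form valued in $-K_X$. The K\"ahler-Einstein equation $\omega_g^n = e^{-\Phi}$ reads, in a local trivialization of $-K_X$, as $\log\det g_{\alpha\bar\beta} = -\Phi + (\text{local pluriharmonic})$; combined with the holomorphicity of $v$, a direct coordinate calculation (the Matsushima identity) gives $\Box_g u = u$ modulo an additive integration constant. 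Absorbing this constant into the normalization of $u$ produces the pointwise identity $\partial^{\Phi} v = u + C$ on $M$.

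To upgrade this to an identity on $X$, the key step is to verify that $\partial^{\Phi} v$ is a bounded current on $X$ with no delta contribution along $D$. Decompose $\partial^{\Phi} v = \partial v - \partial\phi_g\wedge v - (1-\beta)\partial\psi\wedge v$. The first term is smooth since $v$ is a holomorphic section over $X$, and the second lies in $C^{\alpha}_{\beta}$ by $\phi_g\in C^{2,\alpha}_{\beta}$. The singular factor $\partial\psi = dz^1/z^1$ in the third term is neutralized by the tangentiality of $v$: in a chart with $D=\{z^1=0\}$ we have $v = X^1 d\hat z^1 + \sum_{i>1} X^i d\hat z^i$ with $X^1 = z^1 h$, so $\partial\psi\wedge v = h\, dZ$ plus terms killed by $dz^1\wedge dz^1$, which extends with bounded H\"older coefficients across $D$. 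Hence $\partial^{\Phi} v\in C^{\alpha}_{\beta}$ globally on $X$. On the other hand $u\in L^2(e^{-\Phi})$ by Proposition \ref{dbar}, and $\dbar u = \omega_g\wedge v$ is $L^2_{\text{loc}}$ with no mass on $D$ by (\ref{103})--(\ref{104}). Pairing both sides of the putative identity with any smooth compactly supported test function against $e^{-\Phi}$ and integrating by parts, all divisor contributions vanish and the equality reduces to the pointwise identity on $M$; the constant $C$ is fixed by integrating once more against $e^{-\Phi}$ over $X$.

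The regularity $u\in C^{\bar 1,\alpha}_{\beta}$ is then immediate: from $u = \partial^{\Phi} v - C \in C^{\alpha}_{\beta}$ and from $\dbar u = \omega_g\wedge v\in C^{\alpha}_{\beta}$ (the latter by combining the $C^{\alpha}_{\beta}$ regularity of $\omega_g$ with the smoothness of $v$ on $X$), both components of $C^{\bar 1,\alpha}_{\beta}$ regularity are controlled. The principal obstacle is the global extension across $D$; the tangentiality hypothesis on $v$ is used exactly to cancel the pole in $\partial\psi\wedge v$, without which the right-hand side would acquire a delta current proportional to $[D]$ and break the identity.
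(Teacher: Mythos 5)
The central step is asserted rather than proved. You invoke ``a direct coordinate calculation (the Matsushima identity)'' to produce $\Box_g u = u + C$ on $M$, but that identity \emph{is} the content of the lemma, not something one can cite as an available fact. Because $u$ is only known through the equation $\dbar u = \omega_g\wedge v$ (it was produced by H\"ormander's estimate, not by an intrinsic formula), there is no direct pointwise computation that evaluates $u$; any Matsushima-type manipulation can at best produce a quantity whose $\dbar$-derivative agrees with that of $u$. Showing that the resulting discrepancy is a genuine constant -- rather than just some holomorphic function on the non-compact locus $M$ -- is exactly where the work lies, and it is this step that is missing from your proposal.

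The paper's argument fills that gap as follows: on $M$, the algebraic commutation relation $\partial^\Phi\dbar + \dbar\partial^\Phi = i\ddbar\Phi$, together with $\dbar v = 0$ and $\dbar u = \omega_g\wedge v = i\ddbar\Phi\wedge v$, gives $\dbar(\partial^\Phi v - u) = 0$, so $\xi := \partial^\Phi v - u$ is a \emph{holomorphic} function on $M$. Holomorphicity on the open set $M$ alone does not force constancy; one then shows both $\partial^\Phi v$ and $u$ lie in $L^2_{\rm loc}$ near $D$ (your tangentiality analysis contributes precisely here, by taming the $(1-\beta)\partial\psi\wedge v$ term and the singular part of $\partial\phi_g\wedge v$), applies the Riemann removable-singularity theorem to extend $\xi$ holomorphically across $D$, and concludes by compactness of $X$ that $\xi$ is constant. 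Your handling of the extension of $\partial^\Phi v$ across $D$ and the regularity statement $u\in C^{\bar 1,\alpha}_{\beta}$ match the paper's computations and are sound, but they are ancillary to the constancy of $\xi$, which you never establish. As written, the proposal is circular at its crux.
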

\begin{proof}
First note that outside the divisor $D$, we can write equation (\ref{101}) as $$\ddbar \Phi \wedge v = \dbar u.$$ Then by the commutation relation 
$ \partial^{\Phi}\dbar + \dbar\partial^{\Phi} = \ddbar\Phi, $ we derive the following $\dbar$ equation on $M$:
\begin{equation}\label{108}
\dbar(\partial^{\Phi}v - u) =0.
\end{equation}
The difference $\xi = \partial^{\Phi}v - u$ is a holomorphic function outside the divisor. Then a standard theorem (Lemma 1.1, Lecture 5 \cite{Bo}) implies that $\xi$ can be extended across the divisor $D$, provided 
$\partial^{\Phi}v$ and $u$ are in $L^2_{loc}$. The norm $||u||_{L^2}$ is bounded thanks to the $L^2$ estimate (\ref{102}), and notice that we can compute $\partial\Phi$ on $M$ as:
$$\partial\Phi = \partial \phi_g + (1-\beta)\frac{\partial s}{s}.$$
But $ h=\frac{X^1}{z^1}$ is a local holomorphic function near a point on the divisor. Hence the following equation holds on all of $X$: 
\begin{equation}\label{109}
\partial\Phi\wedge v = \partial\phi_g\wedge v + (1-\beta)\frac{\partial s}{s}\wedge v. 
\end{equation}
Now we can write $$\partial^{\Phi}v = F - \partial\phi_g \wedge v,$$ where $F$ is a holomorphic function. In particular, $\partial^{\Phi}v$ is in $L^2$, and we even have a better regularity. The singular term can be decomposed as follows:
\begin{equation}\label{110}
\partial\phi_g\wedge v = (X^1 \frac{\partial\phi_g}{\partial z^1} + \sum_{j>1} X^j \frac{\partial\phi_g}{\partial z^j}) dZ. 
\end{equation}
The sum on the RHS of above equation is a smooth function, and the first term has the following growth control near the divisor:
\begin{equation}\label{111}
X^1 \frac{\partial\phi_g}{\partial z^1} \sim r^{\beta}o(1);\ \ \ \ \dbar(X^1 \frac{\partial\phi_g}{\partial z^1})=
 X^1\dbar \big( \frac{\partial\phi_g}{\partial z^1}\big) \sim r^{2\beta-1}O(1). 
\end{equation}
Hence, the coefficients of $\partial^{\Phi}v$ is in $ C^{\alpha}_{\beta}$ and the coefficients of $\dbar\partial^{\Phi}v$ is in $C^{\alpha}_{\beta}$. 
Finally, this shows the difference $\xi$ is a global holomorphic function on $X$, which can only be a constant. 
\end{proof}

Next we claim that the function $u$ constructed in Proposition \ref{dbar} is in the eigenspace $\Lambda_1$ of the Laplacian operator $\Delta_g$ with eigenvalue $1$ (the smallest eigenvalue). To see this, we first need a normalization condition:
\begin{equation}\label{111}
\int_X u \cdot e^{-\Phi} = 0.
\end{equation}
There are two ways to look at this equation: first, $u$ is a $-K_X$ valued $(n,0)$ form, which is exactly a function on $X$, and $e^{-\Phi}$ is a volume form, so the integral makes sense; second, it is equivalent to write equation (\ref{111}) as 
$$\int_X u\wedge\ol U e^{-\Phi}=0, $$ where $U$ is a $-K_X$ valued $(n,0)$ form, which is the representative of the constant function $1$ on $X$.
Then $e^{-\Phi}$ is viewed as the metric on the anti-canonical line bundle $-K_X$, and equation (\ref{111}) really says that $u$ is orthogonal to the kernel of $\dbar$ operator under the weight $e^{-\Phi}$. Based on this normalization, we have the following lemma:
\begin{lemma}\label{C}
Under the same conditions in Proposition \ref{dbar}. If we normalize the function $u$ as equation (\ref{111}), i.e. $u\perp_{\Phi} \ker{\dbar}$, then the constant $C$ appearing in Lemma \ref{2eq} is zero.
\end{lemma}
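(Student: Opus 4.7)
My plan is to pair the identity $\partial^{\Phi}v = u + C$ from \lemref{2eq} with the volume form $e^{-\Phi} = \omega_g^n$ and integrate over $X$. The right-hand side produces $\int_X u\,e^{-\Phi} + C\cdot V$, where $V = \int_X e^{-\Phi}>0$, so under the normalization hypothesis the conclusion $C=0$ becomes equivalent to the single identity
\[
\int_X \partial^{\Phi} v \cdot e^{-\Phi} = 0.
\]

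To establish this vanishing I would exploit the divergence-theorem interpretation of $\partial^{\Phi}$. A direct local expansion of $\partial^{\Phi}v = \partial v - \partial\Phi\wedge v$ in a trivialization $v = X^{i}\partial_{i}$ identifies the function $\partial^{\Phi}v$ with $\mathrm{div}_{\omega_g^n}(v)$, so that on the regular part $M$ one has $\partial^{\Phi}v\cdot e^{-\Phi} = \mathcal{L}_v(\omega_g^{n}) = d(i_v\,\omega_g^{n})$. On a compact manifold without singularities, Stokes' theorem would close the argument immediately; the content of the lemma is that the same identity survives across the conic singularity of $\omega_g^{n}$ along $D$.

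I would justify the integration by parts via a tubular cut-off. Let $\chi_{\epsilon}$ be a smooth cut-off supported in $\{|s|\geq \epsilon/2\}$ with $\chi_{\epsilon}\equiv 1$ on $\{|s|\geq \epsilon\}$, and apply Stokes' theorem on $X_{\epsilon}:=X\setminus\{|s|<\epsilon\}$. The interior integral converges to $\int_X \partial^{\Phi}v\cdot e^{-\Phi}$ by dominated convergence, since $\partial^{\Phi}v\in C^{\alpha}_{\beta}$ (established in \lemref{2eq}) and $\omega_g^{n}\in L^{1}(X)$. The question is therefore reduced to showing that the boundary flux $\int_{\partial X_{\epsilon}} i_v\,\omega_g^{n}$ tends to zero as $\epsilon\to 0$.

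The main obstacle will be the boundary estimate, and it is precisely here that the tangentiality of $v$ along $D$ is essential: it forces the decomposition $X^{1} = s\cdot h$ with $h$ locally holomorphic, exactly as used in \lemref{dbar}. The $X^{j}$-contributions to $i_v\,\omega_g^{n}$ for $j>1$ pull back to zero on the real hypersurface $\{|z^{1}|=\epsilon\}$, since they contain the factor $dz^{1}\wedge d\bar z^{1}$ which vanishes upon restriction; only the $X^{1}$-term survives. Combining $|X^{1}|\leq C\epsilon$ with the quasi-isometry bound $\omega_g^{n}\leq C|z^{1}|^{2(\beta-1)}(\text{smooth volume})$ and the $\epsilon$-circumference of the slice, the boundary flux is controlled by
\[
\Bigl|\int_{\partial X_{\epsilon}} i_v\,\omega_g^{n}\Bigr|\;\leq\; C\,\epsilon^{2\beta}\;\longrightarrow\;0
\]
for any $0<\beta\leq 1$, which yields the desired vanishing and hence $C=0$.
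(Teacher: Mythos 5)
Your reduction to the single identity $\int_X \partial^{\Phi}v\cdot e^{-\Phi}=0$ is the same one the paper makes, and the interpretation $\partial^{\Phi}v\cdot e^{-\Phi}=d(i_v\,\omega_g^n)$ on $M$ is correct. But from there the two proofs diverge, and yours has a gap that the authors in fact flag explicitly in the remark immediately following Lemma~\ref{C}.

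You apply Stokes' theorem on $X_\ep = X\setminus\{|s|<\ep\}$ and claim the $X^j$-contributions for $j>1$ pull back to zero on the boundary ``since they contain the factor $dz^1\wedge d\bar z^1$ which vanishes upon restriction.'' That vanishing is genuine only on the \emph{local} level set $\{|z^1|=\ep\}$ of a coordinate. The only globally meaningful boundary is $\{|s|_h = \ep\}$, and there the restriction does \emph{not} kill $dz^1\wedge d\bar z^1$. Writing $|s|_h^2 = |z^1|^2 e^{-\psi}$ locally, on $\{|s|_h=\ep\}$ one has $z^1 = r e^{i\theta}$ with $r\approx\ep e^{\psi/2}$, so $dr$ picks up the tangential derivatives $\psi_k$ and $dz^1\wedge d\bar z^1|_{\{|s|_h=\ep\}} = -2ir\,dr\wedge d\theta \sim -i r^2\sum_{k\ge 2}(\psi_k\,dz^k+\psi_{\bar k}\,d\bar z^k)\wedge d\theta \neq 0$. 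This is exactly the authors' warning that ``$\nabla|s|_h$ will generate non-parallel directions to the tangential direction of the divisor'': the hypersurface $\{|s|_h=\ep\}$ is tilted by $\nabla\psi$, and the tangential components $X^j$ with $j>1$ do see the boundary.

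Your \emph{conclusion} survives: because $dz^1\wedge d\bar z^1|_{\{|s|_h=\ep\}}$ is $O(\ep^2)$ rather than $0$, the $j>1$ terms contribute $O(1)\cdot\ep^{2(\beta-1)}\cdot\ep^2 = O(\ep^{2\beta})$ to the flux, which is the same order as the $X^1$-term, so the total boundary flux still goes to zero. But as written the derivation is wrong — it is not true that ``only the $X^1$-term survives'' — and the argument needs to be repaired by carrying along the tangential $\psi_k$ contributions and verifying they too decay. The paper avoids this entirely by a different mechanism: it keeps the domain of integration all of $X$ and instead smooths the weight, replacing $\Phi$ by $\Phi_\ep = \phi_g + (1-\beta)\log(|s|^2 + \ep e^\psi)$, for which $\int_X\partial(v e^{-\Phi_\ep})=0$ is trivial, and then proves convergence of each term as $\ep\to 0$. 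The delicate estimate there is not a boundary flux but the term $\int_X \ep e^\psi \partial\psi\wedge v\, e^{-\phi_g}/(|s|^2+\ep e^\psi)^{2-\beta}$, shown to be $O(\ep^\beta)$. Smoothing the weight rather than truncating the domain sidesteps both the global well-definedness of the cutoff and the tilt of the level sets, which is why the authors preferred it.
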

\begin{proof}
It's enough to prove the following identity:
\begin{equation}\label{112}
\int_X \partial (ve^{-\Phi}) = \int_X (\partial v - \partial\Phi\wedge v) e^{-\Phi} = 0. 
\end{equation}
Let's first consider a smooth approximation sequence of $\Phi$: $$ \Phi_{\ep} = \phi_g + (1-\beta)\log (|s|^2 + \ep e^{\psi}), $$ 
where $\psi$ is a smooth positively curved metric on the line bundle $L_D$. Then we know $\Phi_{\ep}$ is decreasing to $\Phi$, and $i\ddbar \Phi_{\ep} \geq \omega_g$ \cite{Li2}. 
Now it's trivial to see $$\int_X \partial(ve^{-\Phi_{\ep}}) = 0.$$ Then we claim the integrals will converge to $\int_X\partial{v\cdot e^{-\Phi}}$. Notice that we can write the integral as 
\begin{equation}\label{114}
\int_X \partial (ve^{-\Phi_{\ep}}) = \int_X \partial v\cdot e^{-\Phi_{\ep}} - \int_X \partial\phi_g\wedge v e^{-\Phi_{\ep}} 
- \int_X \partial \log (|s|^2 + \ep e^{\psi}) \wedge v e^{-\Phi_{\ep}}.
\end{equation}
The first two terms on the RHS of above equation will converges to $$\int_X (\partial v - \partial\phi_g\wedge v)\cdot e^{-\Phi},$$ by Lebesgue's dominated convergence theorem. 
But the third term is the tricky part here. 
\begin{eqnarray}\label{115}
\partial \log (|s|^2 + \ep e^{\psi}) \wedge v e^{-\Phi_{\ep}} &=& \frac{\ol s \partial s + \ep \partial\psi e^{\psi}}{(|s|^2+\ep e^{\psi})}\wedge v e^{-\Phi_{\ep}}
\nonumber\\
&=& \frac{\ol s \partial s\wedge ve^{-\phi_g}}{(|s|^2+\ep e^{\psi})^{2-\beta}} + \frac{\ep e^{\psi}\partial\psi\wedge ve^{-\phi_g} }{(|s|^2+\ep e^{\psi})^{2-\beta}}.
\end{eqnarray}
The first term in the last line of equation (\ref{115}) is safe since $$\partial s\wedge v = s\cdot h$$ for some holomorphic function $h$ locally near the divisor. 
For the second term, it's enough to estimate it locally in the orthogonal direction to the divisor $D$. For $z^1\in \bC$, we can compute the following:
\begin{eqnarray}\label{116}
\int_{|z^1|<1} \frac{\ep dz^1\wedge d\bar{\zeta}}{(|z^1|^2+\ep)^{2-\beta}} &=& c\ep \int_0^1  \frac{rdr}{(r^2+\ep)^{2-\beta}}
\nonumber\\
&=& c\ep (\ep^{\beta-1} + O(1))\nonumber\\
&\sim& \ep^{\beta},
\end{eqnarray}
where $r = |z^1|$ and $c$ is some uniform constant. Hence the second term converges to zero when $\ep\rightarrow 0$, which implies the convergence of the integral, i.e. $$\lim_{\ep\rightarrow 0}\int_X \partial(v\cdot e^{-\Phi_{\ep}}) = \int_X \partial(v\cdot e^{-\Phi}) =0.$$
\end{proof}
\begin{rem}
It's easy to see that equation (\ref{112}) holds locally near the divisor, by considering this integration on a sequence of subdomains defined as $D_{\ep} = \{|s|>\ep \}$.
However, this integration by parts argument can not be directly applied to our situation. This is because, 
on the one hand, the defining function $|s|$ is not well defined globally; on the other hand, $\nabla |s|_h$ will generate non-parallel directions to the tangential direction of the divisor.
\end{rem}

Now if we combine Lemma \ref{2eq} and Lemma \ref{C}, then outside the divisor $D$, the function $u$ satisfies 
$$\Box_g u = u.$$
That is to say, the function $u$ is in fact an eigenfunction of $\Box_g$ with smallest eigenvalue $1$ outside the divisor. 
\begin{lemma}\label{1110}
Let $\omega_g$ be a K\"ahler-Einstein cone metric with angle $2\pi\beta$ along a smooth divisor $D$. Suppose $u\in C^{\alpha}_{\beta}$ is a function such that the following things hold:
$$\Box_{\omega_g} u = u,$$ on $M$, 
and 
\begin{equation}\label{L2}
\int_X|u|^2 e^{-\Phi} < C, 
\end{equation}
with
\begin{equation}\label{L21}
\int_X |\nabla u|^2_{\omega_g} e^{-\Phi} < C, 
\end{equation}
where the norm for the $(0,1)$ form is taken with respect to the cone metric $\omega_g$.
Then $u$ is in $ C^{2,\alpha}_{\beta}$. 
\end{lemma}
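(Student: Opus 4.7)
The plan is to upgrade $u$ from $C^{\alpha}_{\beta}$ to $C^{2,\alpha}_{\beta}$ in two stages. First, promote the pointwise identity $\Box_{\omega_g}u = u$, valid a priori only on $M$, to a global $W^{1,2}$-weak equation on all of $X$ using the integrability assumptions \eqref{L2} and \eqref{L21}. Second, appeal to Donaldson's linear $C^{2,\alpha}_{\beta}$-Schauder theory \cite{MR2975584} for the cone Laplacian with $C^{\alpha}_{\beta}$ right-hand side $-u$ to conclude $u\in C^{2,\alpha}_{\beta}$.

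For the first stage, I would choose cutoffs $\chi_{\epsilon}\in C^{\infty}(X)$ with $\chi_{\epsilon}=0$ on $\{|s|_h\leq\epsilon\}$, $\chi_{\epsilon}=1$ on $\{|s|_h\geq 2\epsilon\}$, and Euclidean gradient of size $\epsilon^{-1}$. For $\eta\in C^{\infty}(X)$, testing $\Box_{\omega_g} u = u$ against $\chi_{\epsilon}\eta\,e^{-\Phi}$ on $M$ and integrating by parts yields
\begin{equation*}
\int_M \chi_{\epsilon}\langle\nabla u,\nabla\eta\rangle_{\omega_g} e^{-\Phi}+\int_M \eta\langle\nabla u,\nabla\chi_{\epsilon}\rangle_{\omega_g} e^{-\Phi}=\int_M u\eta\chi_{\epsilon}\, e^{-\Phi}.
\end{equation*}
The cross term with $\nabla\chi_{\epsilon}$ is controlled by Cauchy-Schwarz by
\begin{equation*}
\|\eta\|_{\infty}\Bigl(\int_{\{\epsilon<|s|_h<2\epsilon\}}|\nabla u|^2_{\omega_g}e^{-\Phi}\Bigr)^{1/2}\Bigl(\int_X |\nabla\chi_{\epsilon}|^2_{\omega_g} e^{-\Phi}\Bigr)^{1/2}.
\end{equation*}
Near $D$ one has $e^{-\Phi}\sim |s|_h^{2\beta-2}$ and $g^{1\bar{1}}\sim|s|_h^{2-2\beta}$, so on the annular shell of Euclidean measure $\sim \epsilon^2$ a direct computation gives $\int_X |\nabla\chi_{\epsilon}|^2_{\omega_g}e^{-\Phi}=O(1)$ uniformly in $\epsilon$. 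The first factor tends to $0$ by absolute continuity of $\int |\nabla u|^2 e^{-\Phi}$ using \eqref{L21}. Letting $\epsilon\to 0$ produces the global weak formulation
\begin{equation*}
\int_X \langle\nabla u,\nabla\eta\rangle_{\omega_g} e^{-\Phi}=\int_X u\eta\, e^{-\Phi}, \quad\forall \eta\in C^{\infty}(X).
\end{equation*}

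For the second stage, the right-hand side $-u$ lies in $C^{\alpha}_{\beta}$ by hypothesis. Donaldson's $C^{2,\alpha}_{\beta}$ Schauder theory for the cone Laplacian of a $C^{2,\alpha}_{\beta}$ K\"ahler cone metric produces, up to a constant $c$, a $C^{2,\alpha}_{\beta}$ function $v$ solving $\Delta_{\omega_g}v=-u-c$ in the weak sense above. The difference $u-v\in W^{1,2}(\omega_g)$ is then weakly harmonic for the cone Laplacian; testing the weak equation against $u-v$ itself shows that $|\nabla(u-v)|_{\omega_g}=0$ a.e., so $u-v$ is constant. This places $u$ in $C^{2,\alpha}_{\beta}$.

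The main obstacle is the cutoff argument of the first stage: the delicate matching between the cone volume weight $e^{-\Phi}\sim |s|_h^{2\beta-2}$ and the singular inverse metric $g^{1\bar{1}}\sim |s|_h^{2-2\beta}$ is precisely what forces the Dirichlet energy of $\chi_{\epsilon}$ to stay uniformly bounded, which is why the $L^2$-gradient hypothesis \eqref{L21} is exactly the correct integrability condition to impose. Weakening it would require a substantially different extension argument across $D$.
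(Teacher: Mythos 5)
Your proposal is correct and follows the same two-stage structure as the paper's (very terse) proof: first upgrade the pointwise equation on $M$ to a global $W^{1,2}$-weak equation on $X$, then invoke Donaldson's $C^{2,\alpha}_{\beta}$ Schauder theory. Where the paper simply cites Section~5 of Calamai--Zheng for the weak-solution step, you carry it out explicitly with a radial cutoff, and your bookkeeping of the exponents is right: $|\nabla\chi_{\epsilon}|^2_{\omega_g}\sim r^{2-2\beta}\epsilon^{-2}$ against the weight $e^{-\Phi}\sim r^{2\beta-2}$ gives $\epsilon^{-2}$ over a shell of Euclidean area $\sim\epsilon^2$, hence $O(1)$, and the factor that actually vanishes comes from absolute continuity of $|\nabla u|^2_{\omega_g}e^{-\Phi}$ via \eqref{L21}. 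Your second stage is actually slightly more careful than the paper's phrasing: rather than ``applying'' the Schauder estimate directly to $u$ (which a priori only controls a $C^{2,\alpha}_{\beta}$ norm once you already know it is finite), you solve $\Delta_{\omega_g}v=-u$ for $v\in C^{2,\alpha}_{\beta}$, show $u-v$ is weakly harmonic with zero Dirichlet energy, and conclude $u=v+\mathrm{const}$; this is the standard and rigorous way to turn the a priori estimate into a regularity statement. The Harnack step the paper mentions is indeed redundant here since $u\in C^{\alpha}_{\beta}$ is already a hypothesis of the lemma; it matters in the paper only because the same citation chain is reused in contexts where $u$ starts out merely in $W^{1,2}$. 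One small caveat worth recording: to test the weak equation against $u-v$ itself you need density of smooth (or at least $C^{2,\alpha}_{\beta}$) functions in $W^{1,2}(\omega_g,e^{-\Phi})$, which holds because $D$ has zero capacity for the cone metric -- the same cutoff computation you already did furnishes this.
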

\begin{proof}
We will only sketch the proof here. From \eqref{L2} and \eqref{L21}, Section 5 in \cite{Calamai-Zheng} implies that $u$ is a $W^{1,2}$ weak solution. Then the Harnack inequality, Proposition 5.12 proved in \cite{Calamai-Zheng}, implies that $u$ has bounded $C^{\a}$ norm. Thus the conclusion follows from applying Donaldson's $C^{2,\alpha}_{\beta}$ Schauder estimate to the equation $\Box_{\omega_g} u = u$.

\end{proof}

Observe that inequality (\ref{L2}) is equivalent to say $u\in L^2(\Phi)$, which is guaranteed by the H\"ormander's estimate (Proposition \ref{dbar}). 
Moreover, the condition $\nabla u \in L^2(\omega_g,\Phi)$ is also true by the following lemma.
\begin{lemma}\label{1111}
The function $u$ constructed in Proposition \ref{dbar} satisfies 
\begin{equation}\label{W2}
\int_X |\dbar u|^2_{\omega_g}e^{-\Phi} < +\infty,
\end{equation}
and 
\begin{equation}\label{W21}
\int_X |\partial u|^2_{\omega_g}e^{-\Phi} < +\infty.
\end{equation}
In particular, $u\in C^{2,\alpha}_{\beta}$.
\end{lemma}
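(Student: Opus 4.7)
The plan is to verify the two $L^2$ bounds separately, and then invoke Lemma \ref{1110} directly to obtain the $C^{2,\alpha}_\beta$ regularity.

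First, for \eqref{W2}: this is essentially contained in the H\"ormander construction. Since $\dbar u = f = \omega_g\wedge v$ by Proposition \ref{dbar}, I would reuse the growth estimates \eqref{103}--\eqref{104} together with the cone inverses $g^{\bar 1 1}\sim r^{2-2\beta}$ and $g^{\bar k k}=O(1)$ to observe that $|f|^2_{\omega_g}$ is globally bounded on $X$: the contribution $g^{\bar 1 1}|f_{\bar 1}|^2\sim r^{2-2\beta}\cdot r^{2\beta-2}=O(1)$, and $g^{\bar k k}|f_{\bar k}|^2\sim r^{2\beta}=O(1)$ for $k>1$. Integrating against the finite measure $e^{-\Phi}=\omega_g^n$ then yields \eqref{W2}.

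The real work is \eqref{W21}. My approach would be to establish the integral identity
\[
\int_X|\partial u|^2_{\omega_g}\,\omega_g^n \;=\; \int_X|u|^2\,\omega_g^n,
\]
which on a closed K\"ahler manifold is the standard consequence of the K\"ahler identity $\Box_\partial=\Box_{\dbar}$ on functions combined with $\Box_g u=u$: equivalently, since $\Box_g$ is a real differential operator on functions, $\Box_g\bar u=\bar u$ and $|\partial u|^2 = |\dbar\bar u|^2$. In the cone setting I would justify this by an exhaustion argument. Take a \emph{logarithmic} cut-off $\eta_\epsilon(x)=\rho(\log|s|_h(x)/\log\epsilon)$, where $\rho$ is smooth, equal to $1$ on $(-\infty,1/2]$ and to $0$ on $[1,\infty)$. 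Using $\bigl|\partial|s|_h\bigr|^2_{\omega_g}\sim r^{2-2\beta}$ and the cone volume form $\omega_g^n\sim r^{2\beta-1}\,dr\wedge dS$, a direct computation produces the borderline estimate
\[
\int_X|\partial\eta_\epsilon|^2_{\omega_g}\,\omega_g^n \;\leq\; \frac{C}{\log(1/\epsilon)}\longrightarrow 0.
\]
On $\mathrm{supp}(\eta_\epsilon)\Subset M$, where $u$ is smooth, standard integration by parts with $\Box_g\bar u=\bar u$ gives
\[
\int_X\eta_\epsilon^2|\partial u|^2_{\omega_g}\omega_g^n \;=\; \int_X\eta_\epsilon^2|u|^2\omega_g^n \;-\; 2\,\Re\!\int_X\eta_\epsilon\,\bar u\,\langle\dbar\eta_\epsilon,\dbar\bar u\rangle_{\omega_g}\omega_g^n.
\]
I would then estimate the cross term by Cauchy--Schwarz and the $L^\infty$ bound on $u$ (from $u\in C^\alpha_\beta$), absorb the factor $(\int\eta_\epsilon^2|\partial u|^2)^{1/2}$ into the left-hand side, and use the H\"ormander bound $\int_X|u|^2 e^{-\Phi}<\infty$ from \eqref{102} to conclude that $\int\eta_\epsilon^2|\partial u|^2$ stays bounded and converges to $\int_X|u|^2\omega_g^n$. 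Monotone convergence yields \eqref{W21}, and Lemma \ref{1110} finishes the proof.

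I expect the cone structure near $D$ to be the principal obstacle: naive polynomial cut-offs $\chi(|s|_h/\epsilon)$ have $L^2$ gradient mass of order $O(1)$ in the cone metric and are insufficient to close the argument, so the logarithmic cut-off, with its borderline decay $1/\log(1/\epsilon)$, is essential. A secondary delicate point is arranging the K\"ahler-type identity $|\partial u|^2=|\dbar\bar u|^2$ and the integration by parts on the smooth locus so that, in the limit, no anomalous contribution from $D$ survives.
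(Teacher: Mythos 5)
Your proof of \eqref{W2} is the same as the paper's: both read off the growth of $f_{\bar j}$ from the proof of Proposition \ref{dbar} and use the quasi-isometry $\omega_g\sim\omega_D$ to conclude $|\dbar u|^2_{\omega_g}=O(1)$, then integrate against the finite measure $e^{-\Phi}$. For \eqref{W21}, however, your route is genuinely different. The paper argues at the coordinate level: it splits $|\partial u|^2_{\omega_D}=r^{2-2\beta}|u_{,1}|^2+\sum_{k>1}|u_{,k}|^2$, handles the tangential pieces $u_{,k}$ ($k>1$) by the Fubini--convolution identity \eqref{ibp} (turning $\dbar_k$-norms into $\partial_k$-norms at no cost since the singular weight $|z^1|^{2\beta-2}$ is independent of $z^k$), and handles the normal piece $u_{,1}$ by noting that the factor $r^{2-2\beta}$ cancels the singularity of the measure $e^{-\Phi}\sim r^{2\beta-2}dV_{\rm smooth}$, so what remains is exactly controlled by the membership $u\in W^{1,2}(\omega_0)$ already recorded in Proposition \ref{dbar}. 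You instead run a single global argument: treat $|\partial u|^2=|\dbar\bar u|^2$, integrate the eigenfunction equation $\Box_g\bar u=\bar u$ against $\eta_\epsilon^2\bar u$ with a logarithmic cutoff, absorb the cross term by Cauchy--Schwarz using $u\in L^\infty$ (from the $C^{\bar1,\alpha}_\beta$ regularity of Lemma \ref{2eq}) and the borderline decay $\|\partial\eta_\epsilon\|^2_{L^2(\omega_g)}\lesssim1/\log(1/\epsilon)$, and pass to the limit. Both arguments are correct. Yours is more structural --- it exploits the K\"ahler identity and the PDE rather than the explicit cone geometry, and in fact delivers the stronger equality $\int|\partial u|^2_{\omega_g}e^{-\Phi}=\int|u|^2e^{-\Phi}$ --- whereas the paper's is more elementary and does not require the $L^\infty$ bound on $u$ or the commutativity of $\Box_g$ with conjugation. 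Your observation about the failure of polynomial cutoffs is correct and is precisely the reason the paper elsewhere (Lemma \ref{Bo}) resorts to an iterated-log cutoff; a single-log cutoff suffices here because you only need an $o(1)$ gradient bound, not summability in $\epsilon$.
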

\begin{proof}
First observe that for any $(0,1)$ form $\alpha$, the two norms $|\alpha|^2_{\omega_g}$ and $|\alpha|^2_{\omega_{D}}$ are equivalent locally near a point on the divisor, where $\omega_{D}$ is the standard model cone metric, by the isometric property between these two metrics. Now we have seen in the proof of Proposition \ref{dbar} that $$|u_{,\ol1}|\sim r^{\beta -1}\text{ and }|u_{,\ol k}|\sim r^{\beta}\text{ for }k>1.$$ Then we have
\begin{equation}\label{comp}
|\dbar u|^2_{\omega_{D}} = r^{2-2\beta}|u_{,\ol1}|^2 + \sum_{k>1}|u_{,\ol k}|^2\sim O(1).
\end{equation}
Hence $\dbar u\in L^2(\omega_g, \Phi)$. Next noticed that those derivatives on the tangential directions are all in $L^{2}(\omega_g, \Phi)$. This is because locally we can write for all $k>1$
\begin{equation}\label{ibp}
\int \left( \frac{\partial u}{\partial \ol z^k}  \right) \ol{\left( \frac{\partial u}{\partial \ol z^k}  \right)}\frac{1}{|z^1|^{2-2\beta}}
= \int \left( \frac{\partial u}{\partial z^k}  \right) \ol{\left( \frac{\partial u}{\partial z^k}  \right)}\frac{1}{|z^1|^{2-2\beta}},
\end{equation}  
by Fubini's theorem and a convolution argument (compare to Theorem 4.2.5, H\"ormander \cite{Hor}). Then the lemma will follow if we can prove 
$|u_{,1}|\in L^2$, since $$|\partial u |^2_{\omega_{D}} = r^{2-2\beta}|u_{,1}|^2 + \sum_{k>1}|u_{, k}|^2.$$
But this is true since $u\in W^{1,2}(\om_0)$. 
\end{proof}

All in all, we conclude as follows. 
\begin{theorem}\label{main1}
Suppose there exists a holomorphic vector field $v$ tangential to the divisor $D$. Then the function $u\in  C^{2,\alpha}_{\beta}$ constructed in Proposition \ref{dbar} satisfies the following equation on $X$(interpreted as the linear system (\ref{107})):
\begin{equation}\label{117}
\Box_{g} u = u.
\end{equation}
In particular, $u$ is in the eigenspace $\Lambda_1$ of the Laplacian operator $\Delta_g$ with eigenvalue $1$. 
\end{theorem}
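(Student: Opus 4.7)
The plan is to assemble the preceding lemmas. Proposition \ref{dbar} produces a function $u\in C^{\infty}(M)\cap W^{1,2}(\omega_0)$ satisfying $\dbar u = \omega_g\wedge v$ with the $L^2$ bound \eqref{102}; Lemmas \ref{2eq} and \ref{C} together identify $\partial^{\Phi}v$ with $u$ globally on $X$; and Lemmas \ref{1110}--\ref{1111} upgrade $u$ to $C^{2,\alpha}_{\beta}$. The theorem is then a book-keeping exercise that fits these pieces into the first-order system \eqref{107}.

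First I would invoke Proposition \ref{dbar} and replace $u$ by $u$ minus its $e^{-\Phi}$-weighted average, so that the orthogonality $u\perp_{\Phi}\ker\dbar$ expressed by \eqref{111} is in force. This normalization does not alter the first line of \eqref{107}, since constants are killed by $\dbar$.

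Next I would verify $\Box_g u = u$ on the regular part $M$. On $M$ the geometric Laplacian agrees with $\dbar^{*}_{\Phi,\omega_g}\dbar$, and by \eqref{106} one has
\[
\Box_g u \;=\; \partial^{\Phi}\bigl(\omega_g\lrcorner\dbar u\bigr) \;=\; \partial^{\Phi}\bigl(\omega_g\lrcorner(\omega_g\wedge v)\bigr) \;=\; \partial^{\Phi}v,
\]
the last step being the pointwise K\"ahler identity $\omega_g\lrcorner(\omega_g\wedge v)=v$. Lemma \ref{2eq} then gives $\partial^{\Phi}v = u + C$ in the current sense on all of $X$, and Lemma \ref{C} forces $C=0$ because of the normalization from the previous step. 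Hence $\Box_g u = u$ pointwise on $M$, which is the second line of \eqref{107}.

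Finally I would upgrade the regularity and globalize. Lemma \ref{1111} establishes $\dbar u,\partial u\in L^{2}(\omega_g,\Phi)$, which are precisely the hypotheses \eqref{L2}--\eqref{L21}, so Lemma \ref{1110} yields $u\in C^{2,\alpha}_{\beta}$. With both lines of \eqref{107} now verified on $X$ (the first as an identity of $(n,1)$-currents via Proposition \ref{dbar}, the second as the equation $\partial^{\Phi}v = u$ from step two), the Laplacian equation $\Box_g u = u$ acquires its stated global meaning, and $u$ lies in $\Lambda_1$. The main obstacle has in effect been absorbed into Lemma \ref{C}, where the $\varepsilon$-regularization of the singular weight $e^{-\Phi}$ together with the boundary estimate \eqref{116} is exactly what rules out a non-zero constant; the conceptual novelty exploited here is that the operator $\partial^{\Phi}$ in \eqref{dphi} is metric-free, so the system \eqref{107} makes global sense even though the closed-range property of $\dbar$ on $L^2(\omega_g,\Phi)$ is unavailable under the singular cone metric $\omega_g$.
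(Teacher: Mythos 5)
Your proposal is correct and follows essentially the same route as the paper: Theorem~\ref{main1} is indeed just a synthesis of Proposition~\ref{dbar}, Lemmas~\ref{2eq} and \ref{C} (which give $\partial^{\Phi}v=u$ under the normalization~\eqref{111}), and the regularity upgrade of Lemmas~\ref{1110}--\ref{1111}, with the observation that $\Box_g u=\partial^{\Phi}(\omega_g\lrcorner\dbar u)=\partial^{\Phi}v$ on $M$. The only minor slip is attributing both hypotheses~\eqref{L2} and~\eqref{L21} of Lemma~\ref{1110} to Lemma~\ref{1111}; in fact~\eqref{L2} comes from the H\"ormander estimate~\eqref{102} in Proposition~\ref{dbar}, which you had already invoked, so the argument still closes.
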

\subsection{Creating the holomorphic vector field}\label{sub2}
The remaining task is to prove a theorem ``going backwards". That is to say, to create a holomorphic vector field from a real valued egienfunction $u_2$ in the eigenspace $\Lambda_1$. More precisely, when $u_2$ is chosen as the imaginary part of the function $u\in\Lambda_1$, 
we want to prove the induced vector field $\uparrow\dbar u_2$ is holomorphic. Then its real part is a Killing vector field, and 
this implies 
the automorphism group is the complexification of the group of Killing vector fields, i.e. $\Aut (X,D) = K^{\bC}$. Then it is reductive. 

For any $u\in\Lambda_1$, let's write $u=u_1 + \sqrt{-1}u_2$, where $u_1$ and $u_2$ are real valued functions. We see $u_1$ and $u_2$ also satisfy equation (\ref{117}) on $M$, since
the Laplacian operator $\Box_g$ is a real operator for the K\"ahler-Einstein cone metric $\omega_g$. Then the following system of differential equations holds for the function $u_2$ on $M$:
\begin{equation}\label{118}
\left\{ \begin{array}{rcl}
\omega_g\wedge v_2 &=& \dbar u_2\\
\partial^{\Phi}v_2 &=&  u_2,
\end{array}\right.
\end{equation}
Now we want prove the following theorem.
\begin{theorem}\label{main3}
The vector field $v_2$ is a holomorphic vector field tangential to the divisor $D$. 
\end{theorem}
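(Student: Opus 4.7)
My plan is a two-step Bochner-type $L^{2}$ argument on the regular part $M$, closed off by a cutoff approximation near $D$, following the classical Matsushima-Lichnerowicz strategy adapted to the conical setting. On a smooth compact K\"ahler-Einstein manifold, an eigenfunction with eigenvalue equal to the Einstein constant has holomorphic gradient; the only real novelty here is the conical singularity, which forces every integration by parts to be carried out on $M$ with explicit control of the terms appearing near $D$.

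First, I would derive the elliptic equation $\partial^{\Phi}\dbar v_{2}=0$ on $M$. On $M$, the singular metric $\Phi$ is a genuine smooth potential for $\omega_{g}$ --- $i\partial\bar\partial\Phi=\omega_{g}$ there, since $\psi=\log|s|^{2}$ is pluriharmonic on $M$ --- so the K\"ahler commutation identity $\partial^{\Phi}\dbar+\dbar\partial^{\Phi}=\omega_{g}\wedge(\cdot)$ (up to convention-dependent constants) holds on $-K_{X}$-valued forms. Applying it to $v_{2}$ and substituting both equations of (\ref{118}), namely $\partial^{\Phi}v_{2}=u_{2}$ and $\omega_{g}\wedge v_{2}=\dbar u_{2}$, the two $\dbar u_{2}$ contributions cancel and one is left with $\partial^{\Phi}\dbar v_{2}=0$ on $M$. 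This is dual to the factorization $\Box_{g}=\partial^{\Phi}(\omega_{g}\lrcorner\dbar)$ of (\ref{106}).

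Next I would pair this equation with $v_{2}$ in the weighted $L^{2}$ inner product on $M$ against a cutoff supported away from $D$. Take a logarithmic cutoff $\chi_{\epsilon}$ supported in $\{|s|_{h}\geq\epsilon\}$, equal to $1$ on $\{|s|_{h}\geq\sqrt{\epsilon}\}$, chosen so that $\int_{M}|d\chi_{\epsilon}|^{2}_{\omega_{g}}\,\omega_{g}^{n}\to 0$. Since $\chi_{\epsilon}^{2}v_{2}$ vanishes in a neighborhood of $D$, the formal adjointness $\partial^{\Phi}=(\dbar)^{*}_{\omega_{g},\Phi}$ applies without boundary terms and integration by parts yields
\begin{equation*}
\int_{M}\chi_{\epsilon}^{2}|\dbar v_{2}|^{2}_{\omega_{g}}\,e^{-\Phi}\omega_{g}^{n}\;=\;-2\int_{M}\chi_{\epsilon}\,\langle\dbar v_{2},\,\dbar\chi_{\epsilon}\wedge v_{2}\rangle_{\omega_{g}}\,e^{-\Phi}\omega_{g}^{n}.
\end{equation*}
Cauchy-Schwarz together with the pointwise bound $|v_{2}|_{\omega_{g}}\in L^{\infty}_{\mathrm{loc}}$ near $D$ --- which comes from the same local computation as in Proposition~\ref{dbar}, applied now to $u_{2}\in C^{2,\alpha}_{\beta}$ via the identification $|v_{2}|_{\omega_{g}}\asymp|\dbar u_{2}|_{\omega_{g}}$ --- gives $\|\chi_{\epsilon}\dbar v_{2}\|_{L^{2}}\leq C\|d\chi_{\epsilon}\|_{L^{2}}\to 0$. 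Letting $\epsilon\to 0$ forces $\dbar v_{2}\equiv 0$ on $M$, so $v_{2}$ is holomorphic on $M$.

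For the extension across $D$ and the tangentiality, I would re-run the pointwise estimates (\ref{103})-(\ref{104}) using $u_{2}\in C^{2,\alpha}_{\beta}$ in place of the holomorphicity of the vector field $v$; this gives $|v_{2}^{1}|\lesssim r^{1-\beta}$ and $|v_{2}^{k}|=O(1)$ for $k>1$ in Euclidean coordinates near $D=\{z^{1}=0\}$. Hence $v_{2}$ is a bounded holomorphic section of $T^{1,0}X$ over $M$ and extends to a global holomorphic vector field on $X$ by Riemann's removable-singularity theorem; the bound $|v_{2}^{1}|\lesssim r^{1-\beta}\to 0$ at $D$ provides tangentiality. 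The main obstacle is the integration by parts in Step 2: it demands both a cutoff with vanishing $\omega_{g}$-Dirichlet energy and an $L^{\infty}_{\mathrm{loc}}$ control on $|v_{2}|_{\omega_{g}}$ near $D$, both ultimately resting on $u_{2}\in C^{2,\alpha}_{\beta}$ from Lemmas~\ref{1110} and~\ref{1111}. The commutation in Step 1 is standard on $M$; the danger would have been to commute across $D$, which is precisely what the (unavailable) Kodaira-Bochner formula on the full $X$ would demand.
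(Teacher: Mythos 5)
Your argument is correct and takes a genuinely different, and in some respects more elementary, route than the paper. The paper never derives a pointwise equation for $\dbar v_2$; instead it applies the Siu--Berndtsson Bochner identity (Lemma~\ref{IBP21}, Proposition~\ref{prop}) to $\alpha = \omega_g\wedge v_\ep$ with $v_\ep = \chi_\ep v_2$, estimates the resulting error terms via Lemmas~\ref{lem1} and~\ref{em2}, and invokes Donaldson's eigenvalue lower bound to force $\lim_\ep\int_X|\dbar v_\ep|^2_{\omega_g}e^{-\Phi}=\int_X|u_2|^2e^{-\Phi}-\int_X|\dbar u_2|^2_{\omega_g}e^{-\Phi}\le 0$. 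Your Step~1, feeding \emph{both} equations of~(\ref{118}) into the Chern commutation $\partial^\Phi\dbar + \dbar\partial^\Phi = \ddbar\Phi\wedge(\cdot)$ on $M$ to obtain $\partial^\Phi\dbar v_2 = 0$ pointwise, is a genuine shortcut the paper does not take: it lets you bypass the full Bochner identity of Proposition~\ref{prop}, Lemma~\ref{em2}, and the eigenvalue bound entirely. Step~2 then only needs $\|d\chi_\ep\|_{L^2(\omega_g)}\to 0$ and $|v_2|_{\omega_g}\in L^\infty$, both already supplied by $u_2\in C^{2,\alpha}_{\beta}$ (Lemmas~\ref{1110} and~\ref{1111}).

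One point should be made explicit. The ``formal adjointness $\partial^\Phi=(\dbar)^*_{\omega_g,\Phi}$'' you invoke in Step~2 is loose: the two operators change degree in opposite directions, and the integration by parts you actually perform requires $\dbar^*(\dbar v_2)=0$ (an $(n-1,0)$-form equation), whereas Step~1 produces $\partial^\Phi(\dbar v_2)=0$ (an $(n,1)$-form equation). The bridge between them is the primitivity of $\dbar v_2$, which holds because
\begin{equation*}
\omega_g\wedge\dbar v_2 \;=\; \dbar(\omega_g\wedge v_2) \;=\; \dbar\dbar u_2 \;=\; 0;
\end{equation*}
for a primitive $(n-1,1)$-form the Hodge star is a scalar multiple of the identity, so $\dbar^*\dbar v_2 = -\ast\partial^\Phi\!\ast\dbar v_2$ vanishes. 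Without noting this, the identity you integrate by parts against is not licensed. Finally, your treatment of the extension across $D$ and of tangentiality via the local estimates $|v_2^1|\lesssim r^{1-\beta}$ and $|v_2^k|=O(1)$ for $k>1$ is correct and in fact more explicit than the paper, which ends its proof of Theorem~\ref{main3} at ``$v_2$ is a holomorphic vector field'' without spelling out the removable-singularity step.
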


First notice that $v_2$ has $L^2$ coefficients. This is because $\dbar u_2\in L^2$, and locally in a normal coordinate around an arbitrary point $p\in M$, we have
\begin{eqnarray}\label{120}
|v_2|^2(p) &=&  h_{\alpha\ol\mu}g^{\ol\beta\alpha}u_{2,\ol\beta} g^{\ol\mu\gamma}u_{2,\gamma} 
\nonumber\\
&=& \sum_{\alpha} \frac{1}{\lambda^2_{\alpha}} |u_{,\ol{\alpha}}|^2
\nonumber\\
&\leq& c^{-2} \sum_{\alpha}   |u_{,\ol{\alpha}}|^2,
\end{eqnarray}
where we used the inequality $\omega_g \geq c \omega$. Then observe that $\partial^{\Phi}v_{2}\in L^2$ by the second equation of ({\ref{118}}). 
In fact, we can gain more regularities of $v_2$ from $u$ as follows

\begin{lemma}\label{reg}
$u_2\in  C^{2,\alpha}_{\beta}$. In particular, $v_2\in L^2(\omega_g,\Phi)$ and $\partial^{\Phi}v\in L^2(\Phi)$. 
\end{lemma}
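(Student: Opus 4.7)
The plan is to deduce this lemma from the analysis of $u$ already carried out in Theorem \ref{main1}, Lemma \ref{1110} and Lemma \ref{1111}, by exploiting the fact that $u_2$ is simply the imaginary part of an element $u\in\Lambda_1$, and then to read off the bounds on $v_2$ algebraically from the system (\ref{118}).

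First I would observe that the $C^{2,\alpha}_{\beta}$ regularity of $u_2$ is immediate: by Theorem \ref{main1}, combined with Lemma \ref{1111}, the function $u$ itself lies in $C^{2,\alpha}_{\beta}$, and $C^{2,\alpha}_{\beta}$ is a real Banach space, so the imaginary part $u_2$ inherits this regularity. The same reasoning transfers the integrability bounds for $u$ to $u_2$: the H\"ormander bound (\ref{102}) applied to $u$ yields $u_2\in L^2(\Phi)$, and (\ref{W2}) applied to $u$ yields $\bar\partial u_2 \in L^2(\omega_g,\Phi)$. A self-contained alternative is to apply Lemma \ref{1110} directly to $u_2$, noting that $\Box_{\omega_g} u_2 = u_2$ on $M$ because $\omega_g$ and $\Box_{\omega_g}$ are real, and that the hypotheses (\ref{L2}) and (\ref{L21}) for $u_2$ are dominated by the corresponding bounds for $u$.

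Next, for the claim $v_2 \in L^2(\omega_g,\Phi)$: inverting $\omega_g$ in the first equation of (\ref{118}) on $M$ gives $v_2^{\alpha} = g^{\alpha\bar\beta}(u_2)_{\bar\beta}$, and a pointwise computation in normal coordinates exactly as in (\ref{120}) yields the identity $|v_2|^2_{\omega_g} = |\bar\partial u_2|^2_{\omega_g}$. Since Step 1 already gave an $L^2(\omega_g,\Phi)$ bound on $\bar\partial u_2$, the assertion follows at once. Finally, the bound on $\partial^{\Phi} v_2$ reduces to Step 1 as well: the second equation of (\ref{118}) reads $\partial^{\Phi} v_2 = u_2$, so $\partial^{\Phi} v_2 \in L^2(\Phi)$ because $u_2 \in L^2(\Phi)$.

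The only subtle point is not the regularity or integrability itself, which essentially follows formally from the corresponding statements for $u$, but rather the verification that the system (\ref{118}) holds \emph{globally} on $X$ (and not merely on $M$) for the imaginary part $u_2$. This however is settled by repeating verbatim the arguments of Lemma \ref{2eq} and Lemma \ref{C} with $u$ replaced by $u_2$: the difference $\partial^{\Phi} v_2 - u_2$ is holomorphic outside $D$, extends across $D$ because both terms lie in $L^2_{\mathrm{loc}}$, and must vanish by the normalization $\int_X u_2\, e^{-\Phi}=0$ inherited from that of $u$. This is the only step requiring genuine care, but it is a direct repetition of the earlier arguments rather than a new obstacle.
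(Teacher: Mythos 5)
Your proposal is correct and, in its core, coincides with the paper's proof: the ``self-contained alternative'' you describe — transferring the bounds $u\in L^2(\Phi)$, $\nabla u\in L^2(\omega_g,\Phi)$ from Lemma \ref{1111} to the imaginary part $u_2$ and then invoking Lemma \ref{1110} — is exactly the paper's argument, and the even more direct route of taking imaginary parts of a $C^{2,\alpha}_{\beta}$ function is an equivalent shortcut. The derivation of $v_2\in L^2(\omega_g,\Phi)$ and $\partial^{\Phi}v_2\in L^2(\Phi)$ from the first and second lines of (\ref{118}) also matches the paragraph preceding the lemma.

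One caveat: the final paragraph of your proposal is an unnecessary detour and slightly misrepresents the setup. In Lemma \ref{2eq} and Lemma \ref{C}, $v$ was a \emph{given} holomorphic vector field and $u$ was produced by solving a $\dbar$-equation, so $\partial^{\Phi}v-u$ could a priori differ by a constant and the normalization had to be checked. Here the roles are reversed: $u_2$ is given, and $v_2$ is \emph{defined} by $v_2:=\uparrow^{\omega_g}\dbar u_2$, i.e.\ by the first line of (\ref{118}). On $M$ one then has $\partial^{\Phi}v_2=\Box_g u_2=u_2$ pointwise, because $\Box_g$ is a real operator and $\Box_g u = u$ on $M$; no holomorphic extension across $D$ and no normalization constant arise. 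The system (\ref{118}) is only claimed on $M$, and the subsequent argument (Theorem \ref{main3}) works entirely with cutoffs $\chi_{\ep}$ supported away from $D$, so no global version of (\ref{118}) is needed.
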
  
\begin{proof}
By Lemma \ref{1111}, $u\in L^2(\Phi)$ and $\nabla u\in L^2(\omega_g, \Phi)$, which implies $u_2\in L^2(\Phi)$ and $\nabla u_2\in L^2(\omega_g, \Phi)$. 
Hence $u_2\in  C^{2,\alpha}_{\beta}$ by Lemma \ref{1110}. 
\end{proof}
However, the true obstruction is that we don't know the growth of $\dbar v_2$ (even $L^2$ is unclear!) near the divisor, where the third derivatives of the potential are involved. 

\subsection{Cut-off function}
In order to circumvent this problem, we need to invoke a useful cut off function (Lemma 2.2 \cite{Bo11}).
First let $$\eta: \bR^{+}\rightarrow \bR^{+}\cup \{0 \} $$ be an auxiliary function, which is a non-decreasing smooth function such that $\eta =0$ 
when $x< 1$ and $\eta = 1$ for $x>2$ with $|\eta'|$ and $|\eta''|$ bounded. Then define for any $\ep>0$ small, 
\begin{equation}\label{121}
\rho_{\ep}:= \eta(\ep\log(-\log |s|^2_h)), 
\end{equation}
where $h=e^{-\psi}$, a smooth positively curved hermitian metric on the line bundle $L_D$, and we can always normalize $|s|^2_h < 1$ on $X$.  For the convenience of readers, we compute its derivatives as follows.
\begin{lemma}\label{Bo}
Let $\tau = |s|^2e^{-\psi}$ be the $L^2$ norm of the section. On $M$, we can write 
\begin{equation}\label{122}
\dbar\rho_{\ep} =  \frac{\ep\eta'}{\log\tau} \ol{\left( \frac{\partial^{\psi}s}{s}\right)}, 
\end{equation}
and 
\begin{equation}\label{123}
\partial\dbar\rho_{\ep} = -\ep\eta' \frac{\partial\dbar\psi}{\log\tau} + (\ep^2\eta'' - \ep\eta') \left | \frac{\partial^{\psi}s}{s\log\tau} \right |^2.
\end{equation}
In particular, $||\dbar\rho_{\ep}||_{L^2(\om_0)} \rightarrow 0$ as $\ep\rightarrow 0$. 
\end{lemma}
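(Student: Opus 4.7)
The plan is a direct chain-rule computation in the real parameter $\tau = |s|^2_h = |s|^2 e^{-\psi}$, together with a one-dimensional integral estimate for the $L^2$ claim.

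\textbf{Step 1 (first derivative).} First I would record the algebraic identity
\[
\bar\partial\tau = \tau\cdot\overline{\left(\frac{\partial^\psi s}{s}\right)},
\]
which follows at once from $\bar\partial(|s|^2 e^{-\psi}) = e^{-\psi}\bigl(s\,\overline{\partial s} - |s|^2\,\bar\partial\psi\bigr)$ combined with the definition $\partial^\psi s = \partial s - s\,\partial\psi$. Writing $\rho_\epsilon = F(\tau)$ with $F(x) = \eta(\epsilon\log(-\log x))$, the scalar chain rule gives $F'(x) = \epsilon\eta'/(x\log x)$, and so
\[
\bar\partial\rho_\epsilon = F'(\tau)\,\bar\partial\tau = \frac{\epsilon\eta'}{\log\tau}\,\overline{\left(\frac{\partial^\psi s}{s}\right)},
\]
which is exactly (\ref{122}).

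\textbf{Step 2 (second derivative).} Apply $\partial$ to the formula from Step 1 via the Leibniz rule. Differentiating the scalar coefficient,
\[
\partial\!\left(\frac{\eta'}{\log\tau}\right) = \frac{\epsilon\eta''-\eta'}{(\log\tau)^2}\cdot\frac{\partial\tau}{\tau} = \frac{\epsilon\eta''-\eta'}{(\log\tau)^2}\cdot\frac{\partial^\psi s}{s},
\]
using the conjugate of the Step 1 identity to rewrite $\partial\tau/\tau$. Wedging with $\overline{(\partial^\psi s/s)}$ yields precisely the $(\epsilon^2\eta''-\epsilon\eta')\bigl|\partial^\psi s/(s\log\tau)\bigr|^2$ summand. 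For the other Leibniz term I need $\partial\,\overline{(\partial^\psi s/s)} = \partial\bigl(\overline{\partial s}/\bar s - \bar\partial\psi\bigr)$: in a local trivialization both $\bar s$ and $\overline{\partial s}$ are antiholomorphic, so $\partial$ annihilates the first summand and only $-\partial\bar\partial\psi$ survives. Multiplied by the prefactor $\epsilon\eta'/\log\tau$, this contributes the $-\epsilon\eta'\partial\bar\partial\psi/\log\tau$ term in (\ref{123}).

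\textbf{Step 3 ($L^2$ decay).} The factor $\eta'(\epsilon\log(-\log\tau))$ is supported where $\epsilon\log(-\log\tau)\in[1,2]$, i.e.\ in the tubular annulus $\tau\in[e^{-e^{2/\epsilon}},\,e^{-e^{1/\epsilon}}]$ around $D$. There $|\log\tau|\ge e^{1/\epsilon}$ and $|\partial^\psi s/s|^2_{\omega_0}\lesssim |s|^{-2}$ near $D$. In a local coordinate transverse to $D$ with $r=|s|$, the integral reduces to
\[
\int_X |\bar\partial\rho_\epsilon|^2_{\omega_0}\,dV_{\omega_0} \;\lesssim\; \epsilon^2\int_{r_1}^{r_2}\frac{dr}{r(\log r)^2},
\]
and the substitution $u=\log r$ evaluates this as $O(\epsilon^2 e^{-1/\epsilon})\to 0$.

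The main obstacle I anticipate is the bookkeeping in Step 3: on a finite atlas of tubular neighborhoods of $D$ one must compare $|s|^2$ to $\tau$ up to smooth factors coming from $e^{-\psi}$, and verify that the derivatives of $\psi$ contribute only lower-order terms. Everything else is a direct chain-rule manipulation.
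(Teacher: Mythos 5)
Your proof is correct and follows essentially the same chain-rule computation as the paper: the first-derivative identity $\partial K=\partial^\psi s/s$ (equivalently your $\bar\partial\tau=\tau\,\overline{\partial^\psi s/s}$), Leibniz for the second derivative, and a one-dimensional transverse integral for the $L^2$ decay. The only cosmetic differences are that you obtain the $-\partial\bar\partial\psi$ term by noting $\partial$ kills the antiholomorphic piece $\overline{\partial s}/\bar s$ directly instead of invoking the commutation relation $\bar\partial\partial^\psi+\partial^\psi\bar\partial=\partial\bar\partial\psi$, and in Step 3 you exploit the support of $\eta'$ to get the sharper bound $O(\ep^2 e^{-1/\ep})$ where the paper simply observes $\partial\rho_\ep\wedge\bar\partial\rho_\ep\sim\ep^2\omega_P$ with $\omega_P$ of finite volume.
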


\begin{proof}
Let $K =\log\tau$, and it derivative is 
\begin{equation}\label{124}
\partial K = \frac{\ol s\partial ( { s e^{-\psi}}) }{|s|^2e^{-\psi}} = \frac{\partial^{\psi} s}{s}.
\end{equation}
The function $\rho_{\ep}$ can be written as 
$$\rho_{\ep} = \eta(\ep\log(-\log\tau)). $$
Hence take $\dbar$, we have 
\[
\dbar \rho_{\ep} = \eta' \ep \frac{\dbar K}{K} = \frac{\ep\eta'}{K}\ol{\left ( \frac{\partial^{\psi}s}{s} \right )},
\]
which proved equation (\ref{122}). Take $\partial$ again, we have 
\begin{eqnarray}\label{125}
\ddbar\rho_{\ep} &=& \partial (\ep\eta' \frac{1}{K} \ol{\left ( \frac{\partial^{\psi}s}{s} \right )})
\nonumber\\
&=& \eta'' \ep^2 \frac{1}{K^2}\left ( \frac{\partial^{\psi}s}{s} \right )\wedge \ol{\left ( \frac{\partial^{\psi}s}{s} \right )} 
+ \eta'\ep \partial \left( \frac{1}{K} \ol{\left ( \frac{\partial^{\psi}s}{s} \right )}\right).
\end{eqnarray}
Compute the last term as 
\begin{eqnarray}\label{126}
 \partial \left( \frac{1}{K} \ol{\left ( \frac{\partial^{\psi}s}{s} \right )}\right) &=&
-\frac{1}{K^2} \frac{\partial^{\psi}s}{s}\wedge \ol{\left ( \frac{\partial^{\psi}s}{s} \right )} + \frac{1}{K}\ol{\left( \frac{\dbar\partial^{\psi}s}{s} \right)}
\nonumber\\
&=& -\frac{1}{K^2} \frac{\partial^{\psi}s}{s}\wedge \ol{\left ( \frac{\partial^{\psi}s}{s} \right )} + 
\frac{1}{K} \ol{\ddbar\psi},
\end{eqnarray}
where we used the commutation relation $$\ddbar\psi = \dbar\partial^{\psi} + \partial^{\psi}\dbar$$ in the last equation. Combine equations (\ref{125}) and (\ref{126}), we proved equation (\ref{123}). And the convergence follows easily, since locally on the orthogonal direction, 
$$\partial\rho_{\ep}\wedge \dbar\rho_{\ep} \sim \ep^2 \omega_P,$$
where $\omega_P$ stands for the Poinc\'are metric on the unit disk, which always has a finite volume. 
\end{proof}

The cut off function $\rho_{\ep}$ is supported on a small neighborhood 
$$D_{\ep} = \{ |s|^2_h < \exp(-e^{\frac{1}{\ep}}) \}$$ of the divisor, equals to $1$ on $D_{\ep/2}$. Of course the support converges to the divisor when $\ep\rightarrow 0$.

Before using these cut off functions to construct an approximation, let's first assume that there is sequence of smooth vector fields $v_{\ep}$, such that they belong to the following family.
\begin{equation}
\mathcal{V}_{\ep}: = \{ v\ ; \dbar v\wedge \omega_g =0 \ and\ v = 0\ on\ D_{\ep} \}.
\end{equation}
Then we have the following integration by parts formula for such vector fields.

\begin{lemma}\label{Bochner}
If $v_{\ep}\in \mathcal{V}_{\ep}$, then 
\begin{equation}\label{130}
 \int_X | \dbar v_{\ep}|^2_{\omega_g}e^{-\Phi} =  \int_X | \partial^{\Phi}v_{\ep}|^2e^{-\Phi} 
 - \int_X \omega_g\wedge v_{\ep}\wedge\ol v_{\ep} e^{-\Phi}. 
\end{equation} 
\end{lemma}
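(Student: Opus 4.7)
The plan is to read \eqref{130} as a weighted Kodaira--Bochner formula, applied to $v_\ep$ viewed as an $(n-1,0)$-form with values in the Hermitian line bundle $(-K_X, e^{-\Phi})$ via contraction with a local trivialization of $K_X$. Because $v_\ep$ vanishes on the neighborhood $D_\ep$ of the divisor, every integral is supported on the smooth locus $M$, where $e^{-\Phi}$ is a genuine smooth Hermitian metric. Its Chern curvature is $i\partial\bar\partial\Phi$, which by the K\"ahler--Einstein equation \eqref{ke} equals $\omega_g + 2\pi(1-\beta)[D]$; on the support of $v_\ep$ the divisor current contributes nothing, so we may treat the curvature as simply $\omega_g$. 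Hence no subtlety from the cone singularity enters, and we are reduced to a standard computation on smooth compactly supported sections.

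Concretely, apply the weighted Kodaira--Nakano identity
\begin{equation*}
\dbar^{*}_{\Phi}\,\dbar - \partial^{\Phi}(\partial^{\Phi})^{*} - (\partial^{\Phi})^{*}\partial^{\Phi} = [i\partial\bar\partial\Phi,\,\Lambda]
\end{equation*}
to $v_\ep$ and pair with $v_\ep$ in the weighted inner product. The term $\dbar^{*}_{\Phi}v_\ep$ vanishes automatically because a section of bidegree $(n-1,0)$ has no $(n-1,-1)$-part. This yields
\begin{equation*}
\int_X |\dbar v_\ep|^{2}_{\omega_g}\,e^{-\Phi} - \int_X |\partial^{\Phi} v_\ep|^{2}\,e^{-\Phi} - \int_X |(\partial^{\Phi})^{*} v_\ep|^{2}\,e^{-\Phi} = \int_X \langle[\omega_g,\Lambda] v_\ep,\, v_\ep\rangle\,e^{-\Phi}.
\end{equation*}

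Two standard Hodge identities then close the proof. First, since $v_\ep$ has pure bidegree $(n-1,0)$ we have $\Lambda v_\ep = 0$ trivially, so the $\mathrm{sl}(2)$-commutator $[L,\Lambda] = (k-n)\,\mathrm{id}$ on $\Omega^{k}$ (with $L = \omega_g\wedge\cdot$ and $k = n-1$) gives $\Lambda(\omega_g\wedge v_\ep) = v_\ep$, whence $\langle[\omega_g,\Lambda] v_\ep, v_\ep\rangle\,e^{-\Phi} = -\omega_g\wedge v_\ep\wedge\bar v_\ep\,e^{-\Phi}$. Second, the defining condition $\omega_g\wedge\dbar v_\ep = 0$ of $\mathcal{V}_\ep$ is precisely the primitivity of the $(n-1,1)$-form $\dbar v_\ep$, equivalent to $\Lambda\dbar v_\ep = 0$; combined with $\Lambda v_\ep = 0$ and the K\"ahler identity $[\Lambda,\dbar] = -i(\partial^{\Phi})^{*}$ this forces $(\partial^{\Phi})^{*} v_\ep = 0$, killing the middle term and producing \eqref{130}. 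The only real care required is bookkeeping of sign conventions in Kodaira--Nakano and in the $\mathrm{sl}(2)$-action; otherwise the computation is purely algebraic. The primitivity condition defining $\mathcal{V}_\ep$ is essential --- without it, the leftover $\|(\partial^{\Phi})^{*} v_\ep\|^{2}$ term would degrade the equality to an inequality, whereas with it the identity holds as stated.
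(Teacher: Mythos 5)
Your proof is correct, but it takes a genuinely different route from the paper. The paper proves \eqref{130} by a direct two-step integration by parts: starting from $-\int_X \dbar v_\ep \wedge \ol{\dbar v_\ep}\,e^{-\Phi}$, moving $\dbar$ onto the second factor to produce $\partial^\Phi\dbar v_\ep$, applying the first-order commutation relation $\partial^\Phi\dbar + \dbar\partial^\Phi = i\ddbar\Phi = \omega_g$ (valid on $M$, where everything is supported), and then integrating by parts once more; primitivity of $\dbar v_\ep$ enters only at the very end to identify $-\dbar v_\ep\wedge\ol{\dbar v_\ep}$ with $|\dbar v_\ep|^2_{\omega_g}$. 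You instead invoke the full Bochner--Kodaira--Nakano identity together with the $\mathrm{sl}(2)$-structure of Hodge theory. Your supporting observations are all sound: $\dbar^*_\Phi v_\ep=0$ by bidegree, $(\partial^\Phi)^*v_\ep=0$ via the K\"ahler identity $[\Lambda,\dbar]\propto(\partial^\Phi)^*$ combined with $\Lambda v_\ep=0$ and primitivity of $\dbar v_\ep$ (which is equivalent to $\omega_g\wedge\dbar v_\ep=0$ for $(n-1,1)$-forms), and $[\omega_g,\Lambda]v_\ep=-v_\ep$ since $v_\ep$ is a primitive $(n-1)$-form. Both proofs rest on the same crucial point that $v_\ep\in\mathcal V_\ep$ forces all integrands to be supported in a compact subset of $M$, so that $\Phi$, $\partial\Phi$, and $\omega_g$ are smooth wherever anything is nonzero and integration by parts (or, in your version, the BKN identity for smooth compactly supported forms) applies without qualification. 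The trade-off is clear: the paper's argument is shorter, needs no Hodge-theoretic adjoints or $\mathrm{sl}(2)$-commutators, and sidesteps the sign/normalization bookkeeping you flag at the end; your argument is more systematic and makes it transparent exactly which structural facts (degree, primitivity, curvature equal to the metric) conspire to kill the extra terms. Either is a legitimate proof of the lemma.
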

\begin{proof}
The observation is that $\Phi$ or $\partial\Phi$ only have singularities along the divisor $D$. Hence integration by parts works for free, provided one of the integrand is identically zero in a neighborhood of $D$. Then we compute as follows:
\begin{eqnarray}\label{IBP}
- \int_X \dbar v_{\ep}\wedge \ol{\dbar v_{\ep}}e^{-\Phi} &=& \int_X v_{\ep}\wedge\ol{\partial^{\Phi}\dbar v_{\ep}}e^{-\Phi}
\nonumber\\
&=& \int_X v_{\ep}\wedge \ol{( \omega_g\wedge v_{\ep} - \dbar\partial^{\Phi}v_{\ep} )}e^{-\Phi}
\nonumber\\
&=& \int_X \partial^{\Phi}v_{\ep}\wedge\ol{\partial^{\Phi}v_{\ep}}e^{-\Phi}  - \int_X \omega_g\wedge v_{\ep}\wedge\ol v_{\ep} e^{-\Phi}.
\end{eqnarray}
The first line holds because $\dbar v_{\ep}$ is zero on $D_{\ep}$, and the last line is because $v_{\ep}$ vanishes on $D_{\ep}$. Then by the assumption, $\dbar v_{\ep}$ is primitive with respect to the metric $\omega_g$, which implies
$$|\dbar v_{\ep}|^2_{\omega_g} = - \dbar v_{\ep}\wedge \ol{\dbar v_{\ep}}. $$
\end{proof}

Now if we put 
$\chi_{\ep} = 1-\rho_{\ep}$, then there are two nature ways of approximating:
$$ u_{\ep} = \chi_{\ep}u_2,$$
or $$ v_{\ep} = \chi_{\ep}v_{2}. $$
Let's look at the first approximation $u_{\ep} = (1-\rho_{\ep})u_2$, and we can define $$w_{\ep} = \uparrow^{\omega_g}\dbar u_{\ep}.$$ Then $w_{\ep}$ is indeed in $\mathcal{V}_{\ep}$, and $$\partial^{\Phi}w_{\ep} = \Box_{g}u_{\ep}.$$ Hence Lemma \ref{Bochner} implies 
\begin{equation}\label{op}
\int_X | \dbar w_{\ep}|^2_{\omega_g}e^{-\Phi} = \int_X ( | \Box_g u_{\ep}|^2 - |\dbar u_{\ep} |^2) e^{-\Phi}.  
\end{equation}
However, the growth of the Laplacian of the cut off function $\rho_{\ep}$ is too fast near the divisor. 
($\Delta_{g}\rho_{\ep} \sim \ep r^{-2\beta}(\log r)^{-2} ,$ which is in $L^2$ only when $\beta<1/2$ and never in $L^2(\Phi)$!).
From now on, we assume $$v_{\ep} = \chi_{\ep}v_2 \in \mathcal{V}_{\ep}.$$ Then let's invoke the following Bochner type identity for $(n,q)$ forms with value in certain line bundle $L$, which goes back to Siu, and reformulated by Berndtsson \cite{Bo}. 
Recall that $\om_0$ is smooth K\"ahler metric.
\begin{defn}
Let $\alpha$, $\beta$ be two differential forms with bidegree $(n,q)$ with value in a line bundle $L$. Then
\begin{equation}\label{T}
T_{\alpha}: = c_{n-1} \gamma_{\alpha}\wedge \ol\gamma_{\alpha}\wedge\omega_0^{q-1} e^{-\phi}, 
\end{equation}
where $c_{n-1}= i^{(n-q)^2}$ is a constant to make $T_{\alpha} \geq 0$, and $\gamma_{\alpha}$ is the unique $(n-q,0)$ form 
associated to $\alpha$ such that 
\[
\gamma_{\alpha}\wedge\omega_0^q = \alpha.
\]
\end{defn}

\begin{lemma}\label{IBP21}
The following identity holds.
\begin{eqnarray}\label{IBP31}
i\ddbar T_{\alpha} &=& i\ddbar\phi\wedge T_{\alpha} - 2\Re\langle i\dbar\dbar^*_{\phi}\alpha, \alpha\rangle
\nonumber\\
&+& | \dbar\gamma_{\alpha}|_{\om_0}^2 - | \dbar\alpha |_{\om_0}^2
+ | \dbar^*_{\phi}\alpha|_{\om_0}^2.
\end{eqnarray}
\end{lemma}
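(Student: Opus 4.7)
The plan is a direct pointwise computation of $i\ddbar T_\alpha$ in the spirit of the Siu--Berndtsson Bochner identity for $(n,q)$-forms, using only the Kähler property $d\om_0=0$, the Leibniz rule, and the defining relation $\alpha=\gamma_\alpha\wedge\om_0^q$.

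First I would compute $\partial T_\alpha$. Since $\om_0$ is closed, only $\gamma_\alpha$, $\ol{\gamma_\alpha}$, and $e^{-\phi}$ contribute, giving schematically
\[
\partial T_\alpha = c_{n-1}\bigl(\partial\gamma_\alpha\wedge\ol{\gamma_\alpha}\pm\gamma_\alpha\wedge\partial\ol{\gamma_\alpha}\bigr)\wedge\om_0^{q-1}e^{-\phi}-\partial\phi\wedge T_\alpha,
\]
and analogously for $\dbar T_\alpha$. Applying the opposite derivative produces three disjoint families of terms in $i\ddbar T_\alpha$: (a) the curvature term $i\ddbar\phi\wedge T_\alpha$, arising from hitting $e^{-\phi}$ twice; (b) quadratic square-norm terms built from $\dbar\gamma_\alpha\wedge\ol{\dbar\gamma_\alpha}$ and from $\partial^\phi\gamma_\alpha\wedge\ol{\partial^\phi\gamma_\alpha}$, wedged with $\om_0^{q-1}$; and (c) a mixed term $i\partial\dbar\gamma_\alpha\wedge\ol{\gamma_\alpha}\wedge\om_0^{q-1}e^{-\phi}$ together with its conjugate and $\partial\phi$-contractions.

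Next I would translate the $\gamma_\alpha$-quantities into $\alpha$-quantities. The defining relation gives $\dbar\alpha=\dbar\gamma_\alpha\wedge\om_0^q$, from which a pointwise Lefschetz comparison produces precisely the ``primitive defect'' $|\dbar\gamma_\alpha|^2_{\om_0}-|\dbar\alpha|^2_{\om_0}$ that appears in the lemma. In top holomorphic degree the Hodge star identifies $\dbar^*_\phi\alpha$ with $\partial^\phi\gamma_\alpha$ (up to an explicit constant), so the $\partial^\phi\gamma_\alpha$ quadratic piece is exactly $|\dbar^*_\phi\alpha|^2_{\om_0}$. The mixed family (c) is rewritten by pulling one $\dbar$ off $\gamma_\alpha$ and onto $\alpha$ via $\dbar^*_\phi$, delivering the inner-product term $-2\Re\langle i\dbar\dbar^*_\phi\alpha,\alpha\rangle$.

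The main obstacle will be the bookkeeping of signs and Lefschetz constants. The factor $c_{n-1}=i^{(n-q)^2}$ is tuned so that $c_{n-1}\beta\wedge\ol{\beta}\wedge\om_0^{q-1}$ is a positive multiple of $\om_0^n$ for primitive $(n-q,1)$-forms $\beta$, but the signs produced by commuting $\partial$ past the degree-$(n-q)$ form $\gamma_\alpha$ and by integrating by parts with $\partial^\phi=e^\phi\partial e^{-\phi}$ must be tracked with care. Verifying that the Lefschetz deficit between $|\dbar\gamma_\alpha|^2_{\om_0}$ and $|\dbar\alpha|^2_{\om_0}$ enters with the correct sign, and that the cross term assembles into exactly $-2\Re\langle i\dbar\dbar^*_\phi\alpha,\alpha\rangle$ rather than its negative, is the technical heart of the argument.
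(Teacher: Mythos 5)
The paper does not actually prove Lemma~\ref{IBP21}: it is invoked as a known Bochner--Kodaira identity going back to Siu, ``reformulated by Berndtsson,'' with the citation \cite{Bo}. There is therefore no internal proof to compare against. Your plan does reproduce the overall structure of Berndtsson's derivation — expand $i\ddbar T_\alpha$ by the Leibniz rule, absorb the $\partial\phi$- and $\dbar\phi$-contractions into the twisted operator $\partial^\phi$ so that a curvature family, two square-norm families, and a mixed family emerge, and then convert $\gamma_\alpha$-quantities into $\alpha$-quantities using $\alpha=\gamma_\alpha\wedge\om_0^q$ together with the duality between $\dbar^*_\phi\alpha$ and $\partial^\phi\gamma_\alpha$. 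That is the right scaffolding.

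However, what you submit is a plan, not a proof: you explicitly postpone verifying that the Lefschetz defect produces $|\dbar\gamma_\alpha|^2_{\om_0}-|\dbar\alpha|^2_{\om_0}$ with precisely that sign, and that the mixed family assembles into $-2\Re\langle i\dbar\dbar^*_\phi\alpha,\alpha\rangle$ rather than its negative, calling this ``the technical heart of the argument.'' For an identity of this type the signs and Lefschetz constants \emph{are} the content; the qualitative regrouping is routine, and a single sign error would break Proposition~\ref{prop}, which is obtained by integrating the identity and relies on $|\dbar^*_\Phi\alpha|^2_{\om_g}$ cancelling exactly half of $2\Re\langle i\dbar\dbar^*_\Phi\alpha,\alpha\rangle$ after Stokes. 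Two concrete points your sketch elides: first, $\dbar\gamma_\alpha$ is not primitive, so $c_{n-1}\,\dbar\gamma_\alpha\wedge\ol{\dbar\gamma_\alpha}\wedge\om_0^{q-1}$ is \emph{not} $|\dbar\gamma_\alpha|^2_{\om_0}$ times the volume form, and pinning down the discrepancy as exactly $-|\dbar\alpha|^2_{\om_0}$ requires working through the pointwise Lefschetz decomposition of $\dbar\gamma_\alpha$; second, the literal identification $\dbar^*_\phi\alpha\simeq\partial^\phi\gamma_\alpha$ holds as stated only for $q=1$, when both are $(n,0)$-forms, while for $q>1$ one has $\dbar^*_\phi\alpha$ proportional to $\partial^\phi\gamma_\alpha\wedge\om_0^{q-1}$ and only the norms agree. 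Until these computations are carried out, the identity remains unverified.
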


Now if we take $\phi = \Phi$ and $\omega =\omega_g$, then $i\ddbar\phi = \omega_g$ by K\"ahler-Einstein condition on $M$. The observation again is that integration by parts works on this identity, for all objects vanishing in a neighborhood of the divisor(compare to Lemma \ref{Bochner}). Therefore, we have the following integral equation.
\begin{proposition}\label{prop}
Suppose $\alpha$ is any $(n,1)$ form with value in $-K_X$, such that $\alpha$ vanishes in an open neighborhood $D_{\ep}$ of the divisor. Then
\begin{equation}\label{Boc1}
\int_X |\alpha|_{\om_g}^2 + \int_X |\dbar\gamma_{\alpha}|_{\om_g}^2= \int_X |\dbar\alpha |_{\om_g}^2 + |\dbar^*_{\Phi}\alpha|_{\om_g}^2.
\end{equation}
\end{proposition}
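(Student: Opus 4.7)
The plan is to integrate the pointwise Bochner--Kodaira identity of Lemma~\ref{IBP21} (with $\phi = \Phi$ and background K\"ahler metric $\om_g$, smooth on $M$) over $X$, and to exploit the vanishing hypothesis on $\alpha$ near $D$ to discard all potentially troublesome boundary and distributional contributions. The key advantage of the hypothesis is that the auxiliary $(n-1,0)$-form $\gamma_\alpha$, the top-form $T_\alpha$, and the derivatives $\dbar\alpha$ and $\dbar^*_\Phi\alpha$ all vanish identically on the open neighborhood $D_\ep$ of $D$. Consequently every term in the Bochner identity is smooth and compactly supported in $M$, so integration by parts across $D$ picks up no masses from the current $[D]$ or from the singularities of $\partial\Phi$, $\om_g$ along the divisor.

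First I would apply Stokes' theorem to eliminate the left-hand side: since $T_\alpha$ is supported away from $D$, one has $\int_X i\ddbar T_\alpha = 0$. Next, using the K\"ahler--Einstein identity $i\ddbar\Phi = \om_g$ on $M$, the first term on the right of Lemma~\ref{IBP21} becomes
$$\int_X i\ddbar\Phi\wedge T_\alpha \;=\; \int_X \om_g\wedge T_\alpha \;=\; \int_X |\alpha|^2_{\om_g}\,e^{-\Phi},$$
where the last equality comes from the pointwise identity $\om_g\wedge\gamma_\alpha\wedge\ol\gamma_\alpha = |\alpha|^2_{\om_g}\,\om_g^n/n!$ that follows directly from $\gamma_\alpha\wedge\om_g = \alpha$. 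A standard integration by parts, again legitimate because $\alpha$ vanishes on $D_\ep$, converts the middle term into
$$-2\Re\int_X\langle i\dbar\dbar^*_\Phi\alpha,\alpha\rangle \;=\; -2\int_X |\dbar^*_\Phi\alpha|^2_{\om_g}\,e^{-\Phi}.$$

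Assembling the resulting five integrals gives
$$0 \;=\; \int_X |\alpha|^2_{\om_g}e^{-\Phi} - 2\int_X |\dbar^*_\Phi\alpha|^2_{\om_g}e^{-\Phi} + \int_X |\dbar\gamma_\alpha|^2_{\om_g}e^{-\Phi} - \int_X |\dbar\alpha|^2_{\om_g}e^{-\Phi} + \int_X |\dbar^*_\Phi\alpha|^2_{\om_g}e^{-\Phi},$$
and rearranging the terms yields exactly (\ref{Boc1}).

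I do not expect a genuine obstacle here. The only reason a Bochner-type identity for $(n,1)$-forms is subtle in this setting is the conic singularity of $\om_g$ and $\partial\Phi$ along $D$, and the compact-support hypothesis on $\alpha$ defuses that issue completely. The one bookkeeping point worth double-checking is the precise sign and normalizing constant $c_{n-1} = i^{(n-q)^2}$ in the identification of $\om_g\wedge T_\alpha$ with $|\alpha|^2_{\om_g}\,e^{-\Phi}$, and in the formal adjoint appearing in the middle term; both follow from the standard conventions in Berndtsson's formulation, and should combine to produce the signs and factor $-2+1=-1$ in front of $|\dbar^*_\Phi\alpha|^2_{\om_g}$ that the proposition requires.
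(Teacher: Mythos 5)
Your proposal is correct and follows exactly the paper's (one-sentence) argument: integrate the pointwise identity of Lemma~\ref{IBP21} with $\phi=\Phi$, $\omega=\omega_g$, observe that the left side vanishes by Stokes and that integration by parts on the $\langle i\dbar\dbar^*_\Phi\alpha,\alpha\rangle$ term is legitimate because $\alpha$ (hence $T_\alpha$, $\gamma_\alpha$, and all derivatives) vanish identically on a neighborhood of $D$ where $\omega_g$ and $\partial\Phi$ are singular. Your bookkeeping of the five integrals, including the merging of $-2|\dbar^*_\Phi\alpha|^2+|\dbar^*_\Phi\alpha|^2$ into the single $|\dbar^*_\Phi\alpha|^2$ on the right of~(\ref{Boc1}), matches what the paper asserts.
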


The hope is to apply this Bochner formula to the form $\alpha = \omega_g\wedge v_{\ep}$. Then we can estimate the $L^2$ norm of 
$\dbar v_{\ep}$, but there are some error terms on the RHS of equation (\ref{Boc1}). Fortunately, they are negligible in the following sense.

\begin{lemma}\label{lem1}
$v_{\ep} \rightarrow v_2$ in $L^2(\omega_g, \Phi)$ norm, and $\partial^{\Phi}v_{\ep}\rightarrow \partial^{\Phi}v_2$ in both $L^2(\om_0)$ and $L^2(\Phi)$ norm. In particular, $|| \partial\rho_{\ep}\wedge v_2 ||_{L^2(\Phi)}\rightarrow 0$. 
\end{lemma}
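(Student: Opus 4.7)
The plan is to reduce all three convergences to the Leibniz expansion
\begin{align*}
v_{\ep}-v_2 &= -\rho_{\ep}v_2,\\
\partial^{\Phi}v_{\ep}-\partial^{\Phi}v_2 &= -\rho_{\ep}\partial^{\Phi}v_2-\partial\rho_{\ep}\wedge v_2,
\end{align*}
and then dispatch the two kinds of terms by rather different means. The bulk terms $\rho_{\ep}v_2$ and $\rho_{\ep}\partial^{\Phi}v_2$ I would handle by Lebesgue dominated convergence: $\rho_{\ep}\in[0,1]$ converges pointwise to $0$ on $M$ because its support shrinks to the divisor, while by Lemma \ref{reg} one has $v_2\in L^2(\omega_g,\Phi)$ and $\partial^{\Phi}v_2\in L^2(\Phi)$. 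For the additional $L^2(\omega_0)$ convergence of $\partial^{\Phi}v_{\ep}$, I would invoke Lemma \ref{C} to write $\partial^{\Phi}v_2=u_2$ on $M$; since $u_2\in C^{\alpha}_{\beta}\subset L^{\infty}(X)$ by Lemma \ref{reg}, compactness of $X$ yields $L^2(\omega_0)$ control for free.

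The genuine content, and the ``in particular'' statement, is the estimate
\[
\|\partial\rho_{\ep}\wedge v_2\|_{L^2(\Phi)}\to 0
\]
(and its $L^2(\omega_0)$ analogue). I would prove this by a local pointwise bound. In a coordinate chart around $p\in D$ with local defining function $s=z^1$, Lemma \ref{Bo} writes
\[
\partial\rho_{\ep}=\frac{\ep\eta'}{\log\tau}\left(\frac{dz^1}{z^1}-\partial\psi\right),
\]
while tangentiality of $v$ to $D$ produces $v_2=\sum_i X^i d\hat z^i$ with $X^1=z^1 h$ for some holomorphic $h$. The dangerous singular term $\frac{dz^1}{z^1}$ vanishes when wedged with $X^j d\hat z^j$ for $j\geq 2$ (since $dz^1\wedge d\hat z^j=0$) and yields the smooth expression $h\,dZ$ when wedged with $X^1 d\hat z^1=z^1h\,d\hat z^1$; the remaining $\partial\psi$ contribution is already smooth and bounded. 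Hence the coefficient of $\partial\rho_{\ep}\wedge v_2$ is pointwise bounded by $C\ep|\eta'|/|\log\tau|$. On the support of $\eta'$ the defining relation $\ep\log(-\log\tau)\in[1,2]$ forces $|\log\tau|\geq e^{1/\ep}$, so $\partial\rho_{\ep}\wedge v_2$ is uniformly $O(\ep e^{-1/\ep})$. Since $X$ is compact and both $\omega_0^n$ and $e^{-\Phi}$ are finite measures, this uniform decay gives the required $L^2$ convergences with enormous room to spare.

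The main obstacle, and the only place where anything genuinely has to happen, is this cancellation between the $1/z^1$ singularity of $\partial\rho_{\ep}$ and the vanishing of $X^1$ on $D$ coming from the tangential hypothesis on $v$. Without that cancellation the product $\partial\rho_{\ep}\wedge v_2$ would behave like $1/z^1$ near $D$, and integrated against the cone weight $|z^1|^{2\beta-2}$ in $e^{-\Phi}$ it would already fail to lie in $L^2(\Phi)$ for fixed $\ep$. The tangential condition on $v$ is thus used essentially here, and this is the only point in the lemma where it enters.
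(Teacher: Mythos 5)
There is a genuine circularity in your argument. You identify the key cancellation as coming from the tangential hypothesis on $v$, and write $X^1 = z^1 h$ with $h$ holomorphic so that $\partial\rho_\ep\wedge v_2$ is pointwise bounded by $C\ep|\eta'|/|\log\tau|$. But the vector field $v_2$ in this lemma is \emph{not} the given holomorphic tangential vector field $v$ from Proposition~\ref{dbar}: it is the a priori non-holomorphic vector field defined by the system~\eqref{118}, namely $v_2 = \uparrow^{\omega_g}\dbar u_2$ where $u_2 = \Im(u)$. Proving that $v_2$ is holomorphic (and hence tangential, with $X^1 = z^1 h$) is exactly the content of Theorem~\ref{main3}, for which Lemma~\ref{lem1} is a preparatory step — so your $X^1 = z^1 h$ decomposition assumes what is to be proved.

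What is actually available at this stage is only the $C^{2,\alpha}_\beta$ regularity of $u_2$ (Lemma~\ref{reg}). The paper uses $X^1 = g^{1\ol1}\dbar_1 u_2 + \sum_{k>1}g^{1\ol k}\dbar_k u_2$ together with the cone-metric growth rates $g^{1\ol1}\sim r^{2-2\beta}$, $g^{1\ol k}\sim r^{1-\beta}$, and $u_{2,\ol1}\sim r^{\beta-1}$, to get the weaker vanishing $X^1 \sim r^{1-\beta}$, not $X^1\sim r$. This is not enough for your uniform pointwise bound: the singular contraction $\rho_{\ep,1}X^1 \sim \ep r^{-\beta}(\log r)^{-1}$ is unbounded for $\beta>0$. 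The convergence must therefore be argued in $L^2$ directly, where after inserting the weight $e^{-\Phi}\sim r^{2\beta-2}$ one finds $|\partial\rho_\ep\wedge v_2|^2 e^{-\Phi}\sim \ep^2 r^{-2}(\log r)^{-2}$, which is integrable (thanks to the $(\log r)^{-2}$) and tends to zero with $\ep$ — a much tighter margin than your estimate suggests. Your dominated-convergence treatment of the two bulk terms $\rho_\ep v_2$ and $\rho_\ep u_2$ is correct and matches the paper; the gap is confined to the $\partial\rho_\ep\wedge v_2$ term and the unjustified assumption of holomorphicity/tangentiality of $v_2$.
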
 
\begin{proof}
It's easy to see $||v_{\ep} - v_2||_{L^2(\omega_g,\Phi)}$ converges to zero when $\ep$ decreases to zero, since it's controlled by $||\chi_{D_{\ep}}v_2||_{L^2(\omega_g,\Phi)}$, and the measure of its support $D_{\ep}$ converges to zero. The latter is also true, since
\begin{eqnarray}\label{conv}
\partial^{\Phi}(v_2-v_{\ep}) &=& \partial(\rho_{\ep} v_2) - \partial\Phi\wedge (\rho_{\ep}v_2)
\nonumber\\
&=& \partial\rho_{\ep}\wedge v_2 + \rho_{\ep}( \partial v_2  - \partial\Phi\wedge v_2)
\nonumber\\
&=& \partial\rho_{\ep}\wedge v_2 + \rho_{\ep} u_2.
\end{eqnarray}
Hence $$||\partial^{\Phi}(v_2-v_{\ep})||_{L^2(\om_0)}\rightarrow 0,$$ by Lemma \ref{Bo}. Now we may take a closer look at the term $\partial\rho_{\ep}\wedge v_2$.
By Lemma \ref{Bo} again, we can write 
\begin{equation}\label{new}
\partial\rho_{\ep}\wedge v_2 =  \frac{\ep\eta'}{\log\tau} \left( \frac{\partial s}{s}\wedge v_2 - \partial\psi\wedge v_2    \right).
\end{equation}
Put $v_2 = X^1d\hat{z}^1+ \sum_{k>1}X^k d\hat{z}^k $ locally, we have for $j>1$,
\begin{eqnarray}\label{lem101}
X^j &=& g^{j\ol 1}\dbar_{1}u_2 + \sum_{k>1}g^{j\ol k}\dbar_{k}u_2
\nonumber\\
&\sim & r^{1-\beta}\cdot r^{\beta-1} + O(1)
\nonumber\\
&\sim& O(1).
\end{eqnarray}
Then the only singular term is 
\begin{equation}\label{1100}
\frac{\partial s\wedge v_2}{s\log\tau} =  \frac{X^1}{r\log r},
\end{equation} 
where $r=|z^1|$. But 
\begin{eqnarray}\label{1000}
X^1 &=& g^{1\ol1}\dbar_{1}u_2 + \sum_{k>1}g^{1\ol k}\dbar_{k}u_2 
\nonumber\\
&\sim& r^{2-2\beta}\cdot r^{\beta -1} + r^{1-\beta}\cdot O(1)
\nonumber\\
&\sim& r^{1-\beta }.
\end{eqnarray}
where we used the condition $u_2\in  C^{2,\alpha}_{\beta}$. Finally, 
\begin{equation}
|\partial\rho_{\ep}\wedge v_2 |^2 e^{-\Phi} \sim \frac{\ep^2}{r^2(\log r)^2},
\end{equation}
whose $L^1$ norm converges to zero when $\ep\rightarrow 0$.
\end{proof}

\begin{lemma}
\label{em2}
$\dbar\rho_{\ep} \wedge \dbar u_2 \rightarrow 0$ in $L^2_{(n,2)}(\omega_g,\Phi)$.
\end{lemma}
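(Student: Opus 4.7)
The plan is a direct computation in local coordinates, exploiting the explicit form of $\dbar\rho_\ep$ from Lemma~\ref{Bo} together with the regularity $u_2\in C^{2,\alpha}_\beta$ established in Lemma~\ref{reg}. I would work in a holomorphic chart $(z^1,\ldots,z^n)$ near a point $p\in D = \{z^1 = 0\}$, where the cone metric satisfies $g^{\bar 1 1}\sim r^{2(1-\beta)}$ and $g^{\bar k k}\sim 1$ for $k>1$ (with $r=|z^1|$), and the weighted volume satisfies $e^{-\Phi}=\omega_g^n\sim r^{2(\beta-1)}dV_{\mathrm{Eucl}}$. Since locally $s=z^1\tilde s$ with $\tilde s$ non-vanishing, Lemma~\ref{Bo} yields
\[
\dbar\rho_\ep = \frac{\ep\eta'}{\log\tau}\cdot \frac{d\bar z^1}{\bar z^1} + (\mathrm{bounded\ terms}),
\]
with $\log\tau\sim \log r$ near the divisor.

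Because $d\bar z^1\wedge d\bar z^1=0$, the wedge $\dbar\rho_\ep\wedge \dbar u_2$ only captures the tangential components of $\dbar u_2$. By the growth analysis used in the proof of Lemma~\ref{1111}, $|u_{2,\bar k}|\lesssim 1$ for $k>1$, so the principal contribution is
\[
\sum_{k>1}\frac{\ep\eta'}{\bar z^1\log\tau}u_{2,\bar k}\,d\bar z^1\wedge d\bar z^k,
\]
while the cross terms (the bounded part of $\dbar\rho_\ep$ wedged against the singular $u_{2,\bar 1}\,d\bar z^1$) turn out to be strictly better behaved. Contracting the $(0,2)$-norm with $g^{\bar 1 1}g^{\bar k k}\sim r^{2(1-\beta)}$ gives the pointwise bound
\[
|\dbar\rho_\ep\wedge \dbar u_2|^2_{\omega_g}\lesssim \frac{\ep^2}{r^{2\beta}(\log r)^2}.
\]

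Multiplying by $e^{-\Phi}\sim r^{2(\beta-1)}dV_{\mathrm{Eucl}}$, the full integrand is bounded by $C\ep^2 r^{-2}(\log r)^{-2}dV_{\mathrm{Eucl}}$. The support of $\dbar\rho_\ep$ lies in the logarithmically thin annulus $D_\ep\setminus D_{\ep/2}$, where $|\log r|\sim e^{c/\ep}$. Fubini in the orthogonal direction then gives
\[
\ep^2\int\frac{dr}{r(\log r)^2} = \ep^2\Bigl[\frac{-1}{\log r}\Bigr]_{r_1}^{r_2} = O\bigl(\ep^2 e^{-1/\ep}\bigr)\to 0,
\]
while the tangential directions contribute a finite factor. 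A finite cover of a tubular neighborhood of $D$ with a partition of unity finishes the proof. The main delicacy---there is no deeper obstacle---is the bookkeeping of the balance between the cone inverse metric ($r^{2(1-\beta)}$), the cone volume ($r^{2(\beta-1)}$) and the $1/(\bar z^1\log r)$ singularity of $\dbar\rho_\ep$, which combine to a bare $r^{-2}$ blow-up. This is just barely integrable on the support of $\dbar\rho_\ep$ thanks to the slow $\ep/|\log r|$ decay of the cut-off, and the smallness ultimately comes from the fact that this support collapses to $D$ faster than any polynomial rate.
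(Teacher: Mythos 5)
Your proof is correct and follows essentially the same route as the paper: Lemma~\ref{Bo} gives the explicit $1/(\bar z^1\log\tau)$ singularity of $\dbar\rho_\ep$, the $C^{2,\alpha}_\beta$ regularity of $u_2$ controls the derivatives $u_{2,\bar j}$, and the cone inverse metric and the cone volume $e^{-\Phi}\sim r^{2\beta-2}$ are balanced against each other to produce a bare $\ep^2 r^{-2}(\log r)^{-2}$ integrand whose $L^1$ norm on the thin annulus vanishes with $\ep$. The only cosmetic difference is that the paper organizes the computation through the coefficients $A_{\bar j\bar k}=\rho_{,\bar k}u_{,\bar j}-u_{,\bar k}\rho_{,\bar j}$ and bounds each type separately, whereas you single out the dominant term $\rho_{,\bar 1}u_{2,\bar k}$ in advance and observe that the cross terms $\rho_{,\bar k}u_{2,\bar 1}$ (of order $r^{\beta-1}$ rather than $r^{-1}$, hence better for $\beta>0$) and the purely tangential terms are subordinate; both bookkeepings land on the same integrand. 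Your remark that the integral is $O(\ep^2 e^{-c/\ep})$ because the support collapses to $D$ is sharper than the paper's statement, which only records that the $L^1$ norm tends to zero, but the extra precision is not needed.
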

\begin{proof}
Since $\omega_g$ is isometric to the model cone metric $\omega_{D}$, it's enough to prove locally near the divisor
\begin{equation}
\label{em201}
||\dbar\rho_{\ep}\wedge\dbar u_2||_{L^2(\omega_{D}, \Phi)} \rightarrow 0.
\end{equation}
We can compute it as  
$$\dbar\rho_{\ep}\wedge\dbar u_2 = \sum_{j<k} ( \rho_{,\ol k} u_{,\ol j} - u_{,\ol k}\rho_{,\ol j}) d\ol z^{j}\wedge d\ol z^{k}.$$
Put $A_{\ol j\ol k} = \rho_{,\ol k} u_{,\ol j} - u_{,\ol k}\rho_{,\ol j}$ locally, and then we have 
\begin{eqnarray}\label{em202}
|\dbar\rho_{\ep}\wedge\dbar u_2|^2_{\omega_{\beta}}e^{-\Phi} &=& e^{-\Phi}\sum_{j<k, m<l} g^{\ol j m}_{\beta} g_{\beta}^{\ol k l} A_{\ol j\ol k}\ol{A_{\ol {m}\ol {l}}}
\nonumber\\
& =& e^{-\Phi} ( \sum_{k>1}r^{2-2\beta}|A_{\ol1 \ol k}|^2 + \sum_{j>1}r^{2-2\beta}|A_{\ol j\ol1}|^2+ \sum_{1<j<k} |A_{\ol j\ol k}|^2 )
\nonumber\\
&\sim& \sum_{k>1}|A_{\ol1 \ol k}|^2 + \sum_{j>1}|A_{\ol j\ol1}|^2+ r^{2\beta -2}\sum_{1<j<k} |A_{\ol j\ol k}|^2.
\end{eqnarray}

Now note that $\dbar\rho_{\ep} = \frac{\ep\eta'}{\log\tau} \left( \frac{\partial \ol s}{\ol s} - \dbar\psi  \right) $, which implies for any $k>1$
\begin{eqnarray}\label{em203}
A_{\ol 1 \ol k} & = &   \rho_{,\ol k} u_{,\ol 1} - u_{,\ol k}\rho_{,\ol 1}
\nonumber\\
&\sim& \ep ( r^{\beta -1} + r^{-1}(\log r)^{-1} ).
\end{eqnarray}
And $|A_{\ol j\ol k}|$ is bounded by $\ep$ for any $1<j<k$. Therefore, 
$$ |\dbar\rho_{\ep}\wedge\dbar u_2|^2_{\omega_{g}}e^{-\Phi} \sim \frac{\ep^2}{r^2 (\log r)^2},$$
whose $L^1$ norm converges to zero when $\ep$ does.

\end{proof}

Equipped with these estimates, it's ready to prove our main theorem.

\begin{proof}[Proof of Theorem \ref{main3}]
First recall that by definition $v_{\ep} = \chi_{\ep}v_2$, and note that 
\begin{equation}\label{main301}
\dbar v_{\ep}\wedge\omega_g = \dbar ( \chi_{\ep}v_2\wedge\omega_g ) = -\dbar\rho_{\ep}\wedge \dbar u_2,
\end{equation}
which supports on an annuals region near the divisor. Then the Bochner formula, Proposition \ref{prop} says
\begin{equation}\label{main302}
 \int_X |\dbar v_{\ep}|^2_{\omega_g}e^{-\Phi} =  \int_X |\partial^{\Phi}v_{\ep}|^2e^{-\Phi} 
 - \int_X \omega_g\wedge v_{\ep}\wedge\ol v_{\ep} e^{-\Phi} + \int_X |\dbar v_{\ep}\wedge\omega_g|^2_{\omega_g}e^{-\Phi},
\end{equation}
by taking $\alpha = \omega_{g}\wedge v_{\ep}$.
Notice that the first term $|| \partial^{\Phi}v_{\ep}||_{L^2(\Phi)}$ on the RHS of equation (\ref{main302}) converges to $||u_2||_{L^2(\Phi)}$ by Lemma \ref{lem1}, and the last term can be estimated since
\begin{equation}\label{main303}
 ||\dbar v_{\ep}\wedge\omega_g||_{L^2(\omega_g, \Phi)} = || \dbar\rho_{\ep}\wedge\dbar u_2 ||_{L^2(\omega_g, \Phi)} \rightarrow 0,
\end{equation}
by Lemma \ref{em2}.
Finally we take the limit on both sides
\begin{equation}
0\leq \lim_{\ep}  \int_X | \dbar v_{\ep}|^2_{\omega_g}e^{-\Phi} = \int_X |u_2|^2e^{-\Phi} - \int_X |\dbar u_2 |^2_{\omega_g} e^{-\Phi}\leq 0,
\end{equation}
since by Donaldson \cite{MR2975584}, every eigenvalue $\lambda \geq 1$ for functions in the space $ C^{2,\alpha}_{\beta}$.
Therefore, $$\lim_{\ep} \int_X |\dbar v_{\ep}|^2_{\omega_g} e^{-\Phi}= 0$$ and $v_2$ is a holomorphic vector field.
\end{proof}

\subsection{Some identities on K\"ahler-Einstein cone manifolds}

\begin{lemma}
\label{lem-identity}
For any real valued functions $\varphi, \psi, \zeta \in  C^{2,\alpha}_{\beta}\cap C^{\infty}(M)$, assume $\varphi, \psi\in H_{\theta}$, we have 
\begin{equation}
\label{ibp111}
\Delta_{\theta} \langle \partial\zeta, \partial \varphi \rangle_{\theta} = \langle \ddbar\zeta , \ddbar\varphi \rangle_{\theta}
+\langle \d(\Delta_{\theta}\zeta), \d \varphi \rangle_{\theta},
\end{equation}
on $M$.
In particular, we have 
$$ (\Delta_{\theta} + 1) \langle \d\psi, \d \varphi \rangle_{\theta} = \langle \ddbar \psi, \ddbar \varphi\rangle_{\theta} 
= (\Delta_{\theta} + 1) \langle \d\varphi, \d \psi \rangle_{\theta},  $$
on $M$. And the following integral is finite:
\begin{equation}
\label{ibp222}
 -\int_{M}\varphi \langle \ddbar \zeta, \ddbar \psi \rangle_{\theta}\theta^n = \int_{M} (\varphi\psi - \langle \d \varphi, \d \psi \rangle_{\theta}) \xi \theta^n,
\end{equation}
where $\xi: =(\Delta_{\theta} +1)\zeta$.
\end{lemma}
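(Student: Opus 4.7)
The plan is to derive the three pointwise identities on $M$ as direct consequences of the classical K\"ahler Bochner--Weitzenb\"ock formula, and then to deduce the integral identity \eqref{ibp222} via integration by parts, justified across $D$ by the cut-off argument of Subsection~3.3.

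To prove \eqref{ibp111}, I work at a fixed point of $M$ in normal coordinates for $\theta$. On $M$ the metric $\theta$ is smooth K\"ahler--Einstein with $\mathrm{Ric}(\theta)=\theta$, so $R_{i\bar j}=g_{i\bar j}$ in any local chart. Commuting covariant derivatives past one another, the standard K\"ahler Bochner formula for functions yields
\[
\Delta_{\theta}\langle \partial\zeta,\partial\varphi\rangle_{\theta} \;=\; \langle \ddbar\zeta,\ddbar\varphi\rangle_{\theta} \;+\; \langle \partial\Delta_{\theta}\zeta,\partial\varphi\rangle_{\theta} \;+\; \langle \partial\zeta,\partial\Delta_{\theta}\varphi\rangle_{\theta} \;+\; R_{i\bar j}\zeta^{i}\varphi^{\bar j}.
\]
Substituting $\Delta_{\theta}\varphi=-\varphi$ and $R_{i\bar j}=g_{i\bar j}$, the last two terms cancel and \eqref{ibp111} follows. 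Setting $\zeta=\psi$ and using $\Delta_{\theta}\psi=-\psi$ yields the ``in particular'' identity; the symmetry in $\varphi$ and $\psi$ holds because $\langle \ddbar\psi,\ddbar\varphi\rangle_{\theta}=\langle \ddbar\varphi,\ddbar\psi\rangle_{\theta}$ for real-valued potentials.

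For \eqref{ibp222}, I would apply \eqref{ibp111} with $\varphi$ replaced by $\psi$ to rewrite
\[
\langle \ddbar\zeta,\ddbar\psi\rangle_{\theta} \;=\; \Delta_{\theta}\langle \partial\zeta,\partial\psi\rangle_{\theta} \;-\; \langle \partial\Delta_{\theta}\zeta,\partial\psi\rangle_{\theta},
\]
and then substitute $\Delta_{\theta}\zeta=\xi-\zeta$. Pairing with $-\varphi$ and integrating over $M$: the $\Delta_{\theta}$-term transfers onto $\varphi$ (using $\Delta_{\theta}\varphi=-\varphi$) to give $\int_{M}\varphi\langle\partial\zeta,\partial\psi\rangle_{\theta}\theta^{n}$, which cancels the $-\langle\partial\zeta,\partial\psi\rangle_{\theta}$ contribution from $\Delta_{\theta}\zeta=\xi-\zeta$; the remaining term $\int_{M}\varphi\langle\partial\xi,\partial\psi\rangle_{\theta}\theta^{n}$ is converted by the K\"ahler divergence identity
\[
\nabla_{i}\bigl(\varphi\, g^{\bar j i}\psi_{\bar j}\bigr) \;=\; \langle\partial\varphi,\partial\psi\rangle_{\theta} + \varphi\Delta_{\theta}\psi \;=\; \langle\partial\varphi,\partial\psi\rangle_{\theta} - \varphi\psi
\]
into $\int_{M}\xi\bigl(\varphi\psi-\langle\partial\varphi,\partial\psi\rangle_{\theta}\bigr)\theta^{n}$, which is precisely the right-hand side of \eqref{ibp222}. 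Finiteness of the integral follows from the pointwise $\theta$-boundedness of $\ddbar\zeta,\ddbar\psi,\partial\varphi,\partial\psi$ (guaranteed by $C^{2,\alpha}_{\beta}$-regularity) combined with the finiteness of $\int_{X}\theta^{n}$.

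The main obstacle is the justification of these integrations by parts across the divisor $D$, where $\theta$ is singular. I would address this by localizing each IBP to $\{\chi_{\ep}>0\}$, with $\chi_{\ep}=1-\rho_{\ep}$ the cut-off of Subsection~3.3, where the ordinary K\"ahler divergence theorem applies, and then letting $\ep\to 0$. The error terms live in an annular neighborhood of $D$ and are controlled by $\|\dbar\rho_{\ep}\|_{L^{2}(\om_{0})}\to 0$ from Lemma~\ref{Bo} combined with the pointwise $C^{2,\alpha}_{\beta}$ bounds on the integrands---exactly the scheme used in Lemmas~\ref{lem1}--\ref{em2}.
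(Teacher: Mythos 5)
Your argument is correct and follows essentially the same route as the paper: the pointwise identity \eqref{ibp111} is granted (as the paper also does), and \eqref{ibp222} is reduced to the two integrations by parts that the paper isolates as \eqref{ibp333} and \eqref{ibp555} and justifies via the cut-off $\chi_{\ep}$ --- your algebra simply unpacks the Bando--Mabuchi reference the paper cites. The one imprecision is in the cut-off error control: the self-adjointness step produces a term containing $\ddbar\chi_{\ep}$, so the convergence rests on the pointwise estimate of $\ddbar\rho_{\ep}$ from Lemma~\ref{Bo} giving an $L^1$-integrable bound, not solely on $\|\dbar\rho_{\ep}\|_{L^2(\om_0)}\to 0$.
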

\begin{proof}
The first equality is a point-wise computation, so we don't repeat it here. For the integral equality $(\ref{ibp222})$, notice that the RHS is always finite, since 
$\varphi\psi$, $ \langle \d \varphi, \d \psi \rangle_{\theta}$ and $\xi$ are all bounded function on $X$ thanks to the $ C^{2,\alpha}_{\beta}$ condition.
Then according to Lemma (2.3) in Bando-Mabuchi \cite{MR946233}, it is enough to prove the following integral equations hold, and the integrals are finite: 
\begin{equation}
\label{ibp333}
-\int_{M} \xi \d (\varphi\dbar \psi)\wedge n\theta^{n-1} = \int_{M} \varphi\d\xi\wedge\dbar\psi\wedge n\theta^{n-1},
\end{equation}
and 
\begin{equation}
\label{ibp555}
\int_{M} \varphi \Delta_{\theta} \langle \d\zeta, \d\psi  \rangle_{\theta} \theta^n = -\int_{M} \varphi  \langle \d\zeta, \d\psi  \rangle_{\theta} \theta^n.
\end{equation}
Here we invoke our cut off function $\chi_{\ep}$ again, and notice that the LHS of equation (\ref{ibp333}) is finite. Then we have 
$$\lim_{\ep\rightarrow 0} \int_{M} (\chi_{\ep}\xi) \d (\varphi\dbar \psi)\wedge n\theta^{n-1} = \int_{M} \xi \d (\varphi\dbar \psi)\wedge n\theta^{n-1} .  $$
But here we can apply integration by parts before taking the limit as
\begin{eqnarray}
\label{ibp666}
 - \int_{M} (\chi_{\ep}\xi) \d (\varphi\dbar \psi)\wedge n\theta^{n-1} &=& \int_{M} \d(\chi_{\ep}\xi)\wedge  (\varphi\dbar \psi)\wedge n\theta^{n-1}
\nonumber\\
&=&  \int_{M} \chi_{\ep} \d \xi \wedge  (\varphi\dbar \psi)\wedge n\theta^{n-1}\\
&+& \int_{M} \varphi\xi \text{tr}_{\theta} (\d\chi_{\ep} \wedge  \dbar \psi ) \theta^{n}\nonumber.
\end{eqnarray}

This is because we can view the integral is taken on the open subset $D_{\ep/3}$, 
and then one term in the integrant $(\chi_{\ep}\xi)$ vanishes identically near $\d D_{\ep/3}$. 
Now we can estimate the second term on the last line of above equation as
$$\int_{M} \varphi\xi \text{tr}_{\theta} (\d\chi_{\ep} \wedge  \dbar \psi ) \theta^{n} = 
\int_{D_{\ep} - D_{\ep/2}} \varphi\xi \text{tr}_{\theta} (\d\chi_{\ep} \wedge  \dbar \psi ) \theta^{n} \rightarrow 0, $$
since $$\text{tr}_{\theta} (\d\chi_{\ep} \wedge \dbar\psi) \theta^n \sim \ep (r^{2-\beta} \log r)^{-1}\text{ is }L^1$$ near the divisor (here we can use the local model metric $\omega_{D}$ instead of $\theta$ to compare thanks to the isometric property). Then we proved equation (\ref{ibp333}) by passing to limit. 

For equation (\ref{ibp555}), the RHS is obviously finite, and we use the cut off function to approximate as 
\begin{eqnarray}
\label{ibp2}
\int_{M} \chi_{\ep} \varphi \Delta_{\theta}\langle \d\zeta, \d \psi \rangle_{\theta} \theta^n  
&=& \int_{M} \Delta_{\theta}(\chi_{\ep}\varphi) \langle \d\zeta, \d \psi \rangle_{\theta} \theta^n
\nonumber\\
&=& \int_{M} \text{tr}_{\theta}( \varphi\ddbar\chi_{\ep} + \chi_{\ep}\ddbar\varphi      ) \langle \d\zeta, \d \psi \rangle_{\theta} \theta^n
\nonumber\\
&+& \int_{M}  \text{tr}_{\theta}( \d\varphi\wedge\dbar\chi_{\ep} + \d\chi_{\ep}\wedge \dbar \varphi ) \langle \d\zeta, \d \psi \rangle_{\theta} \theta^n.
\end{eqnarray}
Then we can estimate as before:
$$\int_{M} \text{tr}_{\theta}( \varphi\ddbar\chi_{\ep} )\langle \d\zeta, \d \psi \rangle_{\theta} \theta^n = 
\int_{D_{\ep} - D_{\ep/2}}\text{tr}_{\theta}( \varphi\ddbar\chi_{\ep} )\langle \d\zeta, \d \psi \rangle_{\theta} \theta^n  \rightarrow 0, $$
since $\langle \d\zeta, \d \psi \rangle_{\theta}$ is bounded, and $$\text{tr}_{\theta}( \varphi\ddbar\chi_{\ep} )\theta^n \sim \ep(r^2\log r^2)^{-1}\text{ is }L^1$$ near the divisor.
And then 
$$ \int_{M}  \text{tr}_{\theta}( \d\varphi\wedge\dbar\chi_{\ep} ) \langle \d\zeta, \d \psi \rangle_{\theta} \theta^n \rightarrow 0,$$
by the same reason. Therefore, the integral equality (\ref{ibp555}) follows by passing to the limit.

\end{proof}

\section{The continuity path}

Let $\om$ be a $C^\a_\b$ K\"ahler cone metric and let $\om_\vphi$ be a K\"ahler-Einstein cone metrics which satisfy \eqref{keg}. Additionally, in order to normalise the K\"ahler cone potential $\vphi$, we require it lies in $\cH_\b^0$.

We connect $\om_\vphi$ with $\om$ by the continuity pate $\vphi(t)$ satisfying the equation of currents
\begin{align}\label{Ricpath}
Ric(\om_{\vphi(t)})=t\om_{\vphi(t)}+(1-t)\om+2\pi(1-\b)[D].
\end{align}
It is obvious that $\vphi(t)=0$ is a trivial solution for any $0\leq t\leq1$ and it is the unique solution for any $0\leq t<1$, according to Proposition \ref{weak unique}. 

The trouble is at $t=1$, where the linearised operator is 
$$L_{\vphi(1)}u=\tri_\vphi u+u$$
which is no longer invertible and whose coefficient is the K\"ahler cone metric $\om_\vphi$. The kernel of $L_{\vphi(1)}$ is one to one corresponding to the holomorphic vector field, according to Section \ref{sec01}.
This difficulty is overcomed in Subsection \ref{Bifurcation} by extending Bando-Mabuchi's method to find a holomorphic transformation $\rho$ such that $$\rho^\ast\om_\vphi=\theta=\om_{\la_\theta}$$ and the linearised operator is invertible at such new K\"ahler-Einstein cone metric $\theta$. 

Recall that the formula of $\mathfrak f$ (see \eqref{spgeneral}) is 
\begin{align}\label{spgenerallater}
\mathfrak f=-\log(\frac{\om^n}{\om^n_0}|s|_h^{2(1-\b)})
-\delta|s|_h^{2\b}+h_0.
\end{align}
Written in the potential level, the continuity path becomes
\begin{align}\label{path}
\frac{\om_\vphi^n}{\om^n}=e^{\mathfrak f-t\vphi}
\end{align}
under the normalization condition for $0<t\leq 1$,
\begin{align}\label{norpath}
\int_Me^{\mathfrak f-t\vphi}\om^n=\int_M\om^n=V.
\end{align}
Using the smooth K\"ahler metric $\om_0$ as background metric, we also have
\begin{align}\label{pathsmooth}
\frac{\om_\vphi^n}{\om_0^n}=|s|_h^{2(\b-1)}e^{-t\vphi-\delta|s|_h^{2\b}+h_0}.
\end{align}

\subsection{Eigenvalues and openness on $[0,1)$}

\begin{prop}\label{weak unique}
Suppose that \eqref{path} has a solution $\vphi(s)$ at $t=s$ for some $0\leq s<1$. Then there exists a small constant $\epsilon>0$ such that \eqref{path} has a unique solution on $s\leq t\leq s+\epsilon$.
\end{prop}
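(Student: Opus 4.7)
My plan is an implicit function theorem argument in Donaldson's conic H\"older spaces, with invertibility of the linearised operator extracted from a twisted Lichnerowicz--Bochner computation.

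Consider the nonlinear map
$$
G : C^{2,\alpha}_\beta \times [0,1) \longrightarrow C^{\alpha}_\beta,
\qquad
G(\vphi,t) \;:=\; \log\frac{\om_\vphi^n}{\om^n} - \mathfrak{f} + t\vphi,
$$
restricted to the affine subspace cut out by the normalisation \eqref{norpath}. Zeros of $G(\cdot,t)$ are precisely solutions of \eqref{path}, and by hypothesis $G(\vphi(s),s)=0$. The partial Fr\'echet derivative is
$$
L_s u \;:=\; D_\vphi G(\vphi(s),s)\,u \;=\; \Delta_{\om_{\vphi(s)}} u + s u,
$$
where $\Delta_{\om_{\vphi(s)}}$ denotes the (non-positive) geometric Laplacian of $\om_{\vphi(s)}$. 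Once I establish that $L_s$ is a linear isomorphism between the normalised $C^{2,\alpha}_\beta$ and $C^{\alpha}_\beta$ spaces, the implicit function theorem will produce a unique $C^1$ family $\{\vphi(t)\}_{t\in[s,s+\epsilon]}$ of solutions extending $\vphi(s)$, which is exactly what the proposition asserts.

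Invertibility proceeds in two steps. First, Donaldson's $C^{2,\alpha}_\beta$ Schauder estimate for linear second order operators with conic metric coefficients renders $L_s$ Fredholm; combined with self-adjointness with respect to the volume form $\om_{\vphi(s)}^n$, this forces the Fredholm index to zero, so it suffices to prove $\ker L_s = 0$. Second, suppose $u \in C^{2,\alpha}_\beta$ satisfies $L_s u = 0$ with vanishing average. At $t=s$ the continuity equation \eqref{Ricpath} reads on $M$
$$
\Ricci(\om_{\vphi(s)}) \;=\; s\,\om_{\vphi(s)} + (1-s)\,\om,
$$
so the Ricci form strictly dominates $s\,\om_{\vphi(s)}$ by the strictly positive gap $(1-s)\om$. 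The twisted Lichnerowicz--Bochner argument applied to $u$ then yields
$$
0 \;\geq\; \int_M |\nabla^{1,0}\dbar u|^2_{\om_{\vphi(s)}}\,\om_{\vphi(s)}^n \;+\; (1-s)\int_M |\dbar u|^2_{\om}\,\om_{\vphi(s)}^n,
$$
with both terms on the right non-negative. This forces $\dbar u \equiv 0$ on $M$; since $u$ is real-valued this means $u$ is constant, and the normalisation then gives $u \equiv 0$.

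The only real technical point is justifying the integration by parts in the Bochner computation across the cone singularity. Here I would reuse the cut-off machinery of Section \ref{sec01}: insert $\chi_\epsilon = 1 - \rho_\epsilon$ from Lemma \ref{Bo}, run the Bochner identity on the regular locus $M$, and dispose of the boundary contributions using $\|\dbar\chi_\epsilon\|_{L^2(\om_0)} \to 0$ together with the explicit $C^{2,\alpha}_\beta$--growth rates for $u$, $\dbar u$ and $\ddbar u$, in the spirit of Lemmas \ref{lem1} and \ref{em2}. This is in fact easier than the $t=1$ analysis in Section \ref{sec01}, because the strict gap $(1-s)\om>0$ provides a quantitative slack that swallows the cut-off errors with no delicate holomorphicity bookkeeping. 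Once $\ker L_s=0$ is secured, the Fredholm index argument gives the isomorphism property and the implicit function theorem finishes the proof.
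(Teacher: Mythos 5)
Your proposal follows essentially the same route as the paper: an implicit function theorem in $C^{2,\alpha}_\beta$, with Calamai--Zheng/Donaldson linear theory supplying Fredholmness and with kernel-triviality of $\Delta_{\om_{\vphi(s)}}+s$ extracted from the strict Ricci lower bound $\Ricci(\om_{\vphi(s)})=s\om_{\vphi(s)}+(1-s)\om>s\om_{\vphi(s)}$ via a Bochner computation regularised by the cut-off functions of Section~\ref{sec01}. The paper packages the kernel-triviality step as a separate statement (Proposition~\ref{eigenvalue}) phrased in the twisted Kodaira--Bochner $(n,1)$-form language with weight $\psi(t)$, whereas you run the equivalent Lichnerowicz--Bochner identity directly on the real eigenfunction; this is a cosmetic rather than a substantive difference, and your argument is correct.
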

\begin{proof}
We denote
\begin{align}\label{equ F}
F(\vphi,t)=\log\frac{\om_\vphi^n}{\om^n}-\mathfrak{f}+t\vphi.
\end{align}
It is a nonlinear operator from $C^{2,\a}_\b$ to $C^\a_\b$.
So the linearisd operator is 
\begin{align*}
L_{\vphi(t)}u=\tri_\vphi u+tu
\end{align*}
also from $C^{2,\a}_\b$ to $C^\a_\b$. The condition \eqref{norpath} gives the condition for $0<t<1$,
\begin{align}\label{norpathlin}
\int_M(\vphi+t u)\om_\vphi^n=0.
\end{align}

In order to solve the linear equation defined by the linearised operator, we require the weak solution theory and Donaldson's regularity estimate of the linear equation with K\"ahker cone metric as the coefficients of the leading term. The details and more information could be found in Calamai-Zheng \cite{Calamai-Zheng}. While, the following Proposition \ref{eigenvalue} tells that the linearisation equation has no kernel.
 \end{proof}

Along the continuity path, the volume form $\omega_{\vphi(t)}^n$ can be viewed as a metric on $-K_X$ as
$$\psi(t):=-\log \omega_{\vphi(t)}^n.$$
Then the Laplacian operator for the metric $\omega_{\vphi(t)}$ can be written as 
$$ \Delta_{\omega_{\vphi(t)}} = \dbar^*_{\psi(t)}\dbar,$$
and it can be viewed as an operator acting on $(n,0)$ forms with value in $-K_X$. 
According to Lemma \ref{IBP21}, the Bochner formula reads as 
\begin{equation}
\label{Bochner}
\int_X |\dbar\gamma_{\alpha}|_{\omega_{\vphi(t)}}^2= \int_X |\dbar\alpha |_{\omega_{\vphi(t)}}^2 + \int_X |\dbar^*_{\psi(t)}\alpha|_{\omega_{\vphi(t)}}^2 - \int_X Ric(\alpha,\alpha),
\end{equation}
where $\alpha$ is any $(n,1)$ form with value in $-K_X$ vanishing on $D_{\ep}$. 

Now we assume that $u(t)$ is an eigenfunction of $\Delta_{\omega_{\vphi(t)}}$ with eigenvalue $\lambda$, and we also assume 
$u(t)$ is real-valued and belongs to the H\"older space $ C^{2,\alpha}_{\beta}$. That is, there exists an $(n-1,0)$ form $v(t)$ with value in $-K_X$ satisfying 
\begin{equation}
\label{1181}
\left\{ \begin{array}{rcl}
\omega_{\vphi(t)}\wedge v(t) &=& \dbar u(t),\\
\partial^{\psi(t)}v(t) &=& \lambda u(t).
\end{array}\right.
\end{equation}

Then we are going to prove the following statement.
\begin{prop}\label{eigenvalue}
For all $\lambda \leq t$, there is no such eigenfunction $u(t)$ for $\lambda$.
\end{prop}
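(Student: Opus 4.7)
The plan is to apply the Kodaira--Bochner identity \eqref{Bochner} to the $(n,1)$-form $\alpha := \omega_{\varphi(t)}\wedge v(t)$, which by the first equation of \eqref{1181} equals $\dbar u(t)$ on $M$, and is therefore automatically $\dbar$-closed there. We also have $\gamma_\alpha = v(t)$ and, by the second equation, $\dbar^{*}_{\psi(t)}\alpha = \partial^{\psi(t)} v(t) = \lambda u(t)$. Along the continuity path \eqref{Ricpath}, the curvature $i\ddbar\psi(t) = Ric(\omega_{\varphi(t)})$ decomposes on $M$ as $t\,\omega_{\varphi(t)} + (1-t)\,\omega$.

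Since \eqref{Bochner} requires the test form to vanish in a neighborhood of $D$, I would introduce the cutoff $v_\ep := \chi_\ep v(t)$ with $\chi_\ep = 1-\rho_\ep$ from Section \ref{sub2}, and apply \eqref{Bochner} to $\alpha_\ep := \omega_{\varphi(t)}\wedge v_\ep = \chi_\ep\,\dbar u(t)$. A direct computation yields $\dbar\alpha_\ep = -\dbar\rho_\ep\wedge\dbar u(t)$ and $\dbar^{*}_{\psi(t)}\alpha_\ep = \chi_\ep\,\lambda u(t) + \partial\chi_\ep\wedge v(t)$. Re-running the arguments of Lemmas \ref{lem1} and \ref{em2} with $u(t),v(t)$ in place of $u_2,v_2$, both error terms vanish in $L^2$ as $\ep\to 0$, so in the limit \eqref{Bochner} becomes
\begin{equation*}
\int_X|\dbar v(t)|^2_{\omega_{\varphi(t)}}e^{-\psi(t)} = \lambda^2\int_X u(t)^2 e^{-\psi(t)} - \int_X Ric(\omega_{\varphi(t)})\wedge T_\alpha.
\end{equation*}

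Substituting the Ricci decomposition and using the pointwise identification $|\alpha|^2_{\omega_{\varphi(t)}} = |v(t)|^2_{\omega_{\varphi(t)}}$ together with the integration-by-parts identity
$\int_X \omega_{\varphi(t)}\wedge T_\alpha = \int_X|\dbar u(t)|^2_{\omega_{\varphi(t)}}e^{-\psi(t)} = \lambda\int_X u(t)^2 e^{-\psi(t)}$
(the last equality from the eigen-equation), the Bochner identity reduces to
\begin{equation*}
\int_X|\dbar v(t)|^2_{\omega_{\varphi(t)}}e^{-\psi(t)} = \lambda(\lambda-t)\int_X u(t)^2 e^{-\psi(t)} - (1-t)\,B,
\end{equation*}
where $B := \int_X\omega\wedge T_\alpha \geq 0$ measures the $\omega$-norm of the complex gradient $v(t)$ and is manifestly non-negative. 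Under the hypothesis $\lambda\leq t$ with $t<1$, the right-hand side is non-positive while the left-hand side is non-negative; both must vanish. In particular $B=0$, which forces $v(t)\equiv 0$ in the $\omega$-norm on $M$, hence $\dbar u(t) \equiv 0$ and $u(t)$ is constant --- contradicting the assumption of a non-trivial eigenfunction with $\lambda>0$ (or else collapsing to the trivial case $\lambda=0$, $u$ constant).

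The main technical obstacle lies in the cutoff analysis: verifying that $\|\dbar\rho_\ep\wedge\dbar u(t)\|_{L^2(\omega_{\varphi(t)},\psi(t))}$ and $\|\partial\chi_\ep\wedge v(t)\|_{L^2(\psi(t))}$ both vanish as $\ep\to 0$. This essentially re-runs the growth estimates of Lemmas \ref{lem1} and \ref{em2} with $u(t),v(t),\omega_{\varphi(t)}$ replacing $u_2,v_2,\omega_g$, and relies on the $C^{2,\alpha}_\beta$ regularity of $u(t)$ (which in turn follows from elliptic regularity applied to $\Delta_{\omega_{\varphi(t)}}u(t) = \lambda u(t)$) together with the asymptotic bounds of Lemma \ref{Bo}. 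A secondary subtlety is justifying the convergence $\int_X Ric(\alpha_\ep,\alpha_\ep) \to \int_X Ric(\alpha,\alpha)$ by dominated convergence, which uses the $C^\alpha_\beta$ regularity of the background K\"ahler cone metric $\omega$ to control the integrand near the divisor.
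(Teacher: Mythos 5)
Your proposal is correct and follows essentially the same approach as the paper: apply the Bochner identity to $\alpha_\ep=\omega_{\varphi(t)}\wedge\chi_\ep v(t)$, split $Ric(\omega_{\varphi(t)})=t\omega_{\varphi(t)}+(1-t)\omega$, and pass to the limit via the cutoff estimates of Lemmas~\ref{lem1} and~\ref{em2}. The only cosmetic difference is that you retain the $(1-t)$-term explicitly as $B$ and conclude $B=0\Rightarrow v(t)\equiv 0$, whereas the paper discards it through the strict inequality $Ric(\omega_{\varphi(t)})>t\,\omega_{\varphi(t)}$ and arrives at $0<\lambda(\lambda-t)\int u^2\le 0$ directly.
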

\begin{proof}
We assume that $u(t)$ exists for some $\lambda \leq t$, and $v(t)$ is defined as equation (\ref{1181}).
Define $$v_{t,\ep} := \chi_{\ep}\cdot v(t) $$ and choose $\alpha = \dbar u(t)$ in the Bochner formula (\ref{Bochner}), we have the following identity
\begin{eqnarray}
\label{3020}
 \int_X | \dbar v_{t, \ep}|_{\omega_{\vphi(t)}}^2 e^{-\psi(t)} &
 =&  \int_X | \partial^{\psi(t)}v_{t, \ep}|_{\omega_{\vphi(t)}}^2e^{-\psi(t)} 
 - \int_X Ric(\omega_{\vphi(t)}) \wedge v_{t,\ep}\wedge\ol{v}_{t,\ep} e^{-\psi(t)} 
 \nonumber\\
 & +& \int_X |\dbar v_{t, \ep}\wedge\omega_{\vphi(t)}|_{\omega_{\vphi(t)}}^2 e^{-\psi(t)}
 \nonumber\\
 & < & \int_X | \partial^{\psi(t)}v_{t, \ep}|_{\omega_{\vphi(t)}}^2e^{-\psi(t)} - t \int_X \omega_{\vphi(t)} \wedge v_{t,\ep}\wedge\ol{v}_{t,\ep} e^{-\psi(t)} \\
 &+& \int_X |\dbar v_{t, \ep}\wedge\omega_{\vphi(t)}|_{\omega_{\vphi(t)}}^2 e^{-\psi(t)}.
 \nonumber
\end{eqnarray}

Thanks to Lemma \ref{lem1} and \ref{em2}, we can take the limit when $\ep\rightarrow 0$ as
\begin{eqnarray}
0 &<& \lambda^2\int_{M} |u(t)|_{\omega_{\vphi(t)}}^2 e^{-\psi(t)} - t \int_{M} |\dbar u(t)|_{\omega_{\vphi(t)}}^2 e^{-\psi(t)}  
\nonumber\\
&=& \lambda(\lambda - t) \int_X | u(t) |_{\omega_{\vphi(t)}}^2 e^{-\psi(t)} \leq 0.
\end{eqnarray}
In the last line we used the integration by parts, since $u(t), \varphi(t)\in  C^{2,\alpha}_{\beta}$.
Thus we get the contradiction and the poposition follows.
\end{proof}

\begin{lem}\label{IJdecreasing}
The $I-J$ is non-decreasing along the continuity path.
\end{lem}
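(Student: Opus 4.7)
The plan is to differentiate $(I-J)(\varphi(t))$ along the continuity path and combine the result with the eigenvalue estimate in Proposition \ref{eigenvalue}. First I would take $\partial_t\log$ of both sides of \eqref{path}; in the sign convention $\Box_{g(t)} = -\tr_{\omega_{\varphi(t)}}(i\partial\bar\partial)$ used in Section \ref{sec01}, this gives the evolution relation
\begin{align*}
(\Box_{g(t)} - t)\dot\varphi = \varphi.
\end{align*}
Differentiating the normalization $\varphi(t)\in\cH_\b^0$ in $t$ yields $\int_M \dot\varphi\,\omega_{\varphi(t)}^n = 0$ via the first variation $\dot D_\omega = \tfrac{1}{V}\int\dot\varphi\,\omega_{\varphi(t)}^n$, and combining with the $\partial_t$-derivative of \eqref{norpath} forces $\int_M \varphi\,\omega_{\varphi(t)}^n = 0$ as well, so both $\varphi$ and $\dot\varphi$ are orthogonal to constants in $L^2(\omega_{\varphi(t)}^n)$.

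Second, using $n i\partial\bar\partial\dot\varphi\wedge\omega_{\varphi(t)}^{n-1} = -\Box_{g(t)}\dot\varphi\cdot \omega_{\varphi(t)}^n$ together with the first-variation formulas recorded in \eqref{i functional}, a direct differentiation of $I_\omega$ and $J_\omega$ produces the familiar Bando--Mabuchi identity
\begin{align*}
\frac{d}{dt}(I-J)(\varphi(t)) = \frac{1}{V}\int_M \varphi\,\Box_{g(t)}\dot\varphi\,\omega_{\varphi(t)}^n.
\end{align*}
Substituting $\Box_{g(t)}\dot\varphi = \varphi + t\dot\varphi$ and integrating by parts in the cross-term (using $\varphi = (\Box_{g(t)} - t)\dot\varphi$ a second time) rewrites this as
\begin{align*}
\frac{d}{dt}(I-J)(\varphi(t)) = \frac{1}{V}\int_M \varphi^2\,\omega_{\varphi(t)}^n + \frac{t}{V}\int_M \bigl(|\bar\partial\dot\varphi|^2_{\omega_{\varphi(t)}} - t\,\dot\varphi^2\bigr)\omega_{\varphi(t)}^n.
\end{align*}
Proposition \ref{eigenvalue} tells us that the smallest positive eigenvalue $\mu_1$ of $\Box_{g(t)}$ strictly exceeds $t$, so the variational characterization of $\mu_1$ applied to $\dot\varphi$ (which has zero mean) yields $\int_M |\bar\partial\dot\varphi|^2_{\omega_{\varphi(t)}}\omega_{\varphi(t)}^n \geq \mu_1\int_M\dot\varphi^2\omega_{\varphi(t)}^n > t\int_M \dot\varphi^2\omega_{\varphi(t)}^n$. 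Both summands in the previous display are therefore non-negative, proving $\tfrac{d}{dt}(I-J)\geq 0$.

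The main technical point is to justify the integrations by parts in the presence of the cone singularity of $\omega_{\varphi(t)}$ along $D$. I expect this to work exactly as in Lemma \ref{lem-identity}: one inserts the cut-off $\chi_\ep = 1 - \rho_\ep$ from Section \ref{sub2}, performs the integration by parts on the smooth region, and uses the derivative estimates of Lemma \ref{Bo} together with the $C^{2,\alpha}_\beta$-regularity of $\varphi(t)$ (and of $\dot\varphi$, inherited from the implicit-function setup in Proposition \ref{weak unique}) to show that the boundary error terms involving $\partial\chi_\ep$ and $\partial\bar\partial\chi_\ep$ vanish in the limit $\ep\to 0$.
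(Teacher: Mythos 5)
Your first two steps match the paper: differentiating \eqref{path} in $t$ gives $\triangle_\vphi\dot\vphi+\vphi+t\dot\vphi=0$, and the first variation of $I-J$ is indeed
\[
\frac{d}{dt}(I-J)=-\frac{1}{V}\int_M\vphi\,\triangle_\vphi\dot\vphi\,\om_\vphi^n,
\]
with the $\int\dot\vphi\,\om_\vphi^n$ contributions cancelling so that no normalization is needed to reach this point.

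The gap is in the step where you split the integrand into $\vphi^2 + t(|\dbar\dot\vphi|^2-t\dot\vphi^2)$ and then argue each piece is non-negative separately. Non-negativity of the second piece requires $\dot\vphi$ to be orthogonal to constants in $L^2(\om_\vphi^n)$, and you assert $\int_M\dot\vphi\,\om_\vphi^n=0$ (and $\int_M\vphi\,\om_\vphi^n=0$) by ``differentiating the normalization $\vphi(t)\in\cH^0_\b$.'' That normalization is not imposed along the path: the paper only puts the \emph{endpoint} $\vphi(1)$ in $\cH^0_\b$, because for $0<t<1$ the equation \eqref{path} already fixes the additive constant in $\vphi(t)$ and admits no further normalization. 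The actual consequence of \eqref{norpath} is the coupled relation $\int_M(\vphi+t\dot\vphi)\om_\vphi^n=0$ (this is \eqref{norpathlin}), which does \emph{not} force either integral to vanish separately. Indeed, writing $F(t)=D_\om(\vphi(t))$ one finds $tF'+F=I-J$, so $F'=\tfrac{1}{V}\int\dot\vphi\,\om_\vphi^n$ is generically non-zero. With a non-zero constant mode $c$ in $\dot\vphi$, your second term picks up $-t^2c^2V$ and can be negative, so the term-by-term argument collapses.

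The paper avoids this issue by substituting $\vphi=-\triangle_\vphi\dot\vphi-t\dot\vphi$ only once, arriving at
\[
\frac{d}{dt}(I-J)=\frac{1}{V}\int_M\bigl((\triangle_\vphi\dot\vphi)^2-t|\p\dot\vphi|^2\bigr)\om_\vphi^n,
\]
which is manifestly non-negative by the spectral decomposition $\dot\vphi=\sum_k c_ke_k$: the constant mode $\lambda_0=0$ is annihilated by both $\triangle_\vphi$ and $\p$, and Proposition \ref{eigenvalue} gives $\lambda_k>t$ for $k\ge1$, so the integral is $\sum_{k\ge1}c_k^2\lambda_k(\lambda_k-t)\ge0$ with no mean-zero hypothesis needed. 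Your total quantity is algebraically equal to this one (the $c$-dependent terms cancel between the two pieces), so your conclusion is correct, but the two-term reasoning is not. You should either stop at the paper's one-substitution form, or explicitly project out the constant mode of $\dot\vphi$ before applying the Poincar\'e-type inequality, rather than relying on the non-existent path-wide normalization.
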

\begin{proof}
Along the path \eqref{path}, we have $\tri_\vphi\dot\vphi+\vphi+t\dot\vphi=0$.
Since $\dot{\vphi}$ is $C^{2,\a}_\b$, we have the integration by parts, i.e.
\begin{align*}
\frac{d}{dt}(I-J)
&=-\frac{1}{V}\int_{M}\varphi\triangle_\varphi\dot\varphi\omega_\varphi^{n}  \\
&=\frac{1}{V}\int_{M}(\tri_\vphi\dot\vphi)^2-t|\p\dot\vphi|^2\omega_\varphi^{n}.
\end{align*}
Thus from Proposition \ref{eigenvalue}, we have
$$\frac{d}{dt}(I-J)\geq 0.$$
\end{proof}

\subsection{Approximation of the continuity path}
We now prove the approximation of the continuity path.
Recall that the potential equation along the continuity path satisfies,
\begin{align} \label{originalpath}
\frac{\om_\vphi^n}{\om_0^n}=|s|_h^{2(\b-1)} e^{-t\vphi-\delta|s|^{2\b}_h+h_0}
\end{align}
under the normalisation condition for $0<t\leq 1$,
\begin{align}\label{norpath1}
\int_M|s|_h^{2(\b-1)} e^{-t\vphi-\delta|s|^{2\b}_h+h_0}\om_0^n=V.
\end{align}

From now on, we fix $\tau$ to be a small strictly positive constant which is less than $1$.

\begin{thm}\label{approximation Theorem}
Along the continuity path $\{\om_{\vphi(t)};\tau\leq t\leq 1\}$, the following holds.
The path $\om_{\vphi(t)}$ is the Gromov-Hausdorff limit of a sequence of smooth K\"ahler metrics $\om^i_{\vphi(t)}$ with uniformly bounded diameter and non-negative Ricci curvature. 
\end{thm}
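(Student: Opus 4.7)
The plan is to adapt the approximation theorem of Chen-Donaldson-Sun \cite[Thm.~1.1]{MR3264766} to the continuity path \eqref{Ricpath}. For each $t\in[\tau,1]$, I will construct a sequence of smooth K\"ahler metrics $\om^i_{\vphi(t)}\in \Om$ by regularizing the singular volume form in the Monge-Amp\`ere equation \eqref{pathsmooth}. Pick a smooth semi-positively curved Hermitian metric $h$ on $L_D$ (available since $L_D\geq 0$) with local potential $\psi_h$, and for $\ep_i\downarrow 0$ set $\eta_{\ep_i}:=|s|^2+\ep_i e^{\psi_h}$. This is the same Demailly-type regularization used to define $\Phi_\ep$ in the proof of Lemma \ref{C}, where it is established that $\pbp\log\eta_{\ep_i}\geq 0$. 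Now consider the smooth Monge-Amp\`ere family
\[
\frac{(\om^i_{\vphi(t)})^n}{\om_0^n}\;=\;\eta_{\ep_i}^{\,\b-1}\exp\!\big(\!-t\vphi^i(t)-\delta\,\eta_{\ep_i}^{\,\b}+h_0+c_{i,t}\big),
\]
where $c_{i,t}$ is a normalization constant fixing the total volume to $V$. For each $\ep_i>0$ this equation is smooth in the K\"ahler class $\Om$, and it is solvable on $t\in[\tau,1]$ via the standard continuity method: openness on $[\tau,1)$ follows from the twisted Aubin-Yau theorem, while uniform $C^0$ estimates up to $t=1$ come from Ko\l odziej's $L^\infty$ theorem applied to the RHS, which is uniformly bounded in $L^p(\om_0)$ for any $p<(1-\b)^{-1}$ (since $|s|^{2(\b-1)}\in L^p$ locally). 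These estimates also yield weak convergence $\om^i_{\vphi(t)}\to\om_{\vphi(t)}$ as $\ep_i\to 0$.

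The heart of the argument is the uniform Ricci lower bound. Taking $-\pbp\log$ of the regularized equation, and using $\Ricci(\om_0)=\om_0+\pbp\Psi$ from \eqref{rew} together with $h_0=\Psi+(1-\b)\log h$, one obtains after cancellation the clean identity
\[
\Ricci(\om^i_{\vphi(t)})\;=\;t\om^i_{\vphi(t)}+(1-t)\om_0+(1-\b)\,\pbp\log\eta_{\ep_i}+(1-\b)\,c_1(L_D,h)+\delta\,\pbp\eta_{\ep_i}^{\,\b}.
\]
The first four terms on the RHS are semi-positive (the third by the Demailly regularization, the fourth by the semi-positivity of $L_D$ via the chosen $h$), while the last term is a smooth approximation of the cone contribution $\delta\,\pbp|s|^{2\b}_h$ whose negative part has size $O(\ep_i)$ and is absorbed into $(1-t)\om_0+t\om^i_{\vphi(t)}$ for $t\in[\tau,1]$. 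Thus $\Ricci(\om^i_{\vphi(t)})\geq 0$ uniformly for $t\in[\tau,1]$ and $i$ large.

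With non-negative Ricci and the uniform volume bound $V$, Bishop-Gromov volume comparison combined with the $L^\infty$ bound on $\vphi^i(t)$ (used to rule out collapsing, as in \cite{MR3264766}) yields a uniform diameter bound on $(X,\om^i_{\vphi(t)})$. Gromov's precompactness theorem then extracts a Gromov-Hausdorff convergent subsequence whose limit, identified via the weak convergence of potentials, must coincide with $(X,\om_{\vphi(t)})$; a diagonal argument in $t$ gives the statement uniformly on $[\tau,1]$. The main obstacle is the Ricci lower bound: specifically, verifying the pointwise semi-positivity of $(1-\b)\pbp\log\eta_{\ep_i}$ (handled by the Demailly regularization already exploited in Lemma \ref{C}) and absorbing the smooth but possibly-negative residual from $\delta\,\pbp\eta_{\ep_i}^{\,\b}$ into the uniformly-positive piece $(1-t)\om_0+t\om^i_{\vphi(t)}$, which is precisely why the restriction $t\in[\tau,1]$ with $\tau>0$ is essential and why both the semi-positivity of $L_D$ and the strict positivity of $\Om=-(K_X+(1-\b)L_D)$ are invoked.
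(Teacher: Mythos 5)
The overall strategy you propose — regularize the density $|s|_h^{2(\beta-1)}$ \`a la Demailly, verify a Ricci lower bound via the Poincar\'e-Lelong identity and the convexity inequality $i\partial\bar\partial\log(f+\epsilon)\geq\frac{f}{f+\epsilon}i\partial\bar\partial\log f$, then invoke bounded diameter plus Gromov-Hausdorff precompactness — is indeed the right skeleton and matches the paper's Step~3 and Step~6. However, your construction of the approximating metrics departs from the paper's in a way that opens a genuine gap.

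You propose solving the regularized equation
$$(\omega^i_{\varphi(t)})^n=\eta_{\epsilon_i}^{\beta-1}\exp\bigl(-t\varphi^i(t)-\delta\eta_{\epsilon_i}^{\beta}+h_0+c_{i,t}\bigr)\omega_0^n,$$
in which the unknown $\varphi^i(t)$ appears \emph{both} as the potential of the unknown metric on the left and inside the exponential $e^{-t\varphi^i(t)}$ on the right. This is a (regularized, twisted) K\"ahler-Einstein equation, not a Calabi problem. Solvability of such an equation for $t\in[\tau,1]$ is exactly the hard part: it requires either properness of the corresponding twisted Ding functional or a full set of a priori estimates up to $t=1$, which is precisely what we do not have ahead of time. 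The appeal to "Ko\l odziej's $L^\infty$ theorem applied to the RHS" is circular here, because the right-hand side is not a fixed $L^p$ density — it depends on the unknown through $e^{-t\varphi^i(t)}$, so Ko\l odziej cannot be applied before $\|\varphi^i(t)\|_{L^\infty}$ is already controlled. The paper sidesteps this entirely by a \emph{two-step} construction, and this is the crucial difference. Step~1 first regularizes the volume form $\omega_\varphi^n$ to a smooth family $\eta_\epsilon$ and solves the Calabi problem $\omega_{\varphi_\epsilon}^n=\eta_\epsilon$ (Yau's theorem, always solvable), producing a fixed smooth $\varphi_\epsilon$. Step~2 then freezes that $\varphi_\epsilon$ on the right-hand side and solves a \emph{second} Calabi problem
$$\frac{\omega_{\psi_\epsilon}^n}{\omega_0^n}=(|s|^2_h+\epsilon)^{\beta-1}e^{-t\varphi_\epsilon-\delta|s|^{2\beta}_h+h_0},$$
again by Yau's theorem with a known, smooth right-hand side. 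Both steps are unconditionally solvable; there is no implicit function theorem, no continuity argument, and no circularity in the a priori estimates. The price is an extra reconciliation step (the paper's Step~5) to identify the limit $\psi_0$ with $\varphi$ up to a constant.

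There is a second, smaller but real, error in your absorption argument. You regularize $\delta|s|^{2\beta}_h$ into $\delta\eta_{\epsilon_i}^{\beta}$ and claim the negative part of $\delta\,\partial\bar\partial\eta_{\epsilon_i}^{\beta}$ has size $O(\epsilon_i)$. This is false: writing $\eta=|s|^2_h+\epsilon$, the negative contribution comes from $\beta(\beta-1)\eta^{\beta-2}\partial\eta\wedge\bar\partial\eta$, which near $D$ is of order $(|s|^2_h+\epsilon)^{\beta-2}|s|^2_h\sim|s|^{2\beta-2}_h$ and blows up as $\epsilon\to 0$ rather than tending to zero. The paper deliberately does \emph{not} regularize this term: keeping $\delta|s|^{2\beta}_h$ intact allows the identity $\omega_0+\delta i\partial\bar\partial|s|^{2\beta}_h=\omega_D$, so the cone contribution is absorbed cleanly into the reference model metric, and the Ricci lower bound $\mathop{Ric}(\omega_{\psi_\epsilon})\geq t\,\omega_{\varphi_\epsilon}\geq 0$ falls out without any smallness trade-off against $(1-t)\omega_0$. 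In your version, at $t=1$ the term $(1-t)\omega_0$ vanishes and there is nothing left to absorb the unbounded negative piece into (the unknown $t\,\omega^i$ cannot serve this purpose without prior lower bounds), so the claimed Ricci non-negativity breaks down exactly at the endpoint where it is most needed.
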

\begin{proof}
This is an adaption of Theorem 1.1 in \cite{MR3264766} to our continuity path $\om_\vphi$. 
We omit the index $t$ of $\vphi(t)$ in the following proof, since it is obvious.

\textbf{Step 1.} Since $\om_\vphi$ is in $L^p$, we choose a sequence of smooth volume form $\eta_\eps$, which $L^p$ converges to $\om_\vphi$. Then Yau's resolution of Calabi conjecture provides a K\"ahler potential $\vphi_\eps$, such that
\begin{align}
\om^n_{\vphi_\eps}=\eta_\eps.
\end{align}
From \cite{MR1618325}, $\vphi_\eps$ has uniform $C^\a$ bound and thus converges to $\vphi$ in $C^{\a'}$ for any $\a'<\a$ as $\eps\rightarrow 0$.

\textbf{Step 2.} Adjusting by a constant such that
\begin{align}\label{appronorpath}
\int_M(|s|^2_h+\eps)^{\b-1} e^{-t\vphi_\eps-\delta|s|^{2\b}_h+h_0}\om_0^n=V,
\end{align}
then replacing $\vphi$ on the right hand side of \eqref{originalpath} with $\vphi_\eps$, we have
\begin{align} \label{appropath}
\frac{\om_{\psi_\eps}^n}{\om_0^n}=(|s|^2_h+\eps)^{\b-1} e^{-t\vphi_\eps-\delta|s|^{2\b}_h+h_0}.
\end{align}
Again, Yau's celebrated work gives a solution $\psi_\eps$, which satisfies this equation. 
Again, from \cite{MR1618325}, $\psi_\eps$ has uniform $C^\a$ bound and converges to $\psi_0$ in $C^{\a'}$ for any $\a'<\a$ as $\eps\rightarrow 0$.

\textbf{Step 3.} We compute the Ricci curvature of $\om_{\psi_\eps}$. With the formula
\begin{align}
i\p\bar\p\log(f+\eps)\geq
\frac{f}{f+\eps}i\p\bar\p\log f,
\end{align} we have
\begin{align*}
Ric(\om_{\psi_\eps})
&=Ric(\om_0)-i\p\bar\p h_0+(1-\b)i\p\bar\p\log(|s|^2_h+\eps)+ti\p\bar\p\vphi_\eps+\delta i\p\bar\p |s|^{2\b}_h\\
&=\om_0-(1-\b)i\p\bar\p\log h+(1-\b)i\p\bar\p\log(|s|^2_h+\eps)+ti\p\bar\p\vphi_\eps+\delta i\p\bar\p |s|^{2\b}_h\\
&=t\om_{\vphi_\eps}+(1-t)\om-(1-\b)i\p\bar\p\log h+(1-\b)i\p\bar\p\log(|s|^2_h+\eps)\\
&\geq t\om_{\vphi_\eps}+(1-t)\om-(1-\b)i\p\bar\p\log h+(1-\b)\frac{|s|^2_h}{|s|^2_h+\eps}i\p\bar\p\log |s|^2_h\\
&\geq t\om_{\vphi_\eps}+(1-t)\om
+(1-\b)[-i\p\bar\p\log h+\frac{|s|^2_h}{|s|^2_h+\eps}i\p\bar\p\log |s|^2_h].
\end{align*}

By our choice of $h$ which is a Hermitian metric on $[D]$, we have \begin{align}-i\p\bar\p\log h+\frac{|s|^2_h}{|s|^2_h+\eps}i\p\bar\p\log |s|^2_h\geq 0.\end{align} Thus we have for any $\eps\in(0,1]$, $$Ric(\om_{\psi_\eps})\geq t\om_{\vphi_\eps}\geq0.$$

To sum up, we have proved
\begin{prop}
The approximation sequence $\om_{\psi_\eps}$ has non-negative Ricci curvature.
\end{prop}

\textbf{Step 4.} 
We prove the rough second order estimate of $\psi_\eps$. 
From the Chern-Lu inequality (see Section \ref{Higher order estimate})
\begin{align*}
\tri_{\psi_\eps}(\log \tr_{\om_{\psi_\eps}}\om_0-C{\psi_\eps})\geq
\frac{R_{\psi_\eps}^{i\bar j}g_{0i\bar j}-g_{\psi_\eps}^{i\bar j}g_{\psi_\eps}^{k\bar l}Rm(\om_0)_{i\bar jk\bar l}}{\tr_{\om_{\psi_\eps}}\om_0}-Cn+C\tr_{\om_{\psi_\eps}}\om_0.
\end{align*}
Since $Ric_{\psi_\eps}\geq0$ when $0\leq t\leq \b$, we have
\begin{align*}
\tri_{\psi_\eps}(\log \tr_{\om_{\psi_\eps}}\om_0-C{\psi_\eps})
\geq
[-\max_X Rm(\om_0)+C] \cdot \tr_{\om_{\psi_\eps}}\om_0-Cn.
\end{align*}
Choosing $C=\max_X Rm(\om_0)+1$,
we have the lower bound of $\om_{\psi_\eps}$
\begin{align*}
tr_{\om_{\psi_\eps}}\om_0\leq C(\osc{\psi_\eps}).
\end{align*}
While, we also have the upper bound
\begin{align*}
\tr_{\om_0}\om_{\psi_\eps}\leq[\frac{\om^n_{\psi_\eps}}{\om_0^n} \cdot tr_{\om_{\psi_\eps}}\om_0]^n
=[(|s|^2_h+\eps)^{\b-1} e^{-t\vphi_\eps+h_0} tr_{\om_{\psi_\eps}}\om]^n.
\end{align*}
Thus there is a constant $C$ (independent of $t$) such that for any $\eps\in(0,1]$, 
\begin{align}
C^{-1}\om_0\leq \om_{\psi_\eps}\leq \frac{C}{(|s|^2_h+\eps)^{1-\b}} \om_0.
\end{align}
Then we have the uniform diameter bound of $\om_{\psi_\eps}$ by measuring the length in a small neighbourhood of $D$ under $\om_0$ and outside under $\om_{\psi_\eps}$. The length outside is bounded by using the inequality above 
in conclusion, we arrive at the following proposition.
\begin{prop}
For any $0\leq t\leq 1$, the approximation sequences $\om_{\psi_\eps}$ have uniformly bounded diameter.
\end{prop}
\textbf{Step 5.} 
We show that the limits from \textbf{Step 2.} have the relation,
\begin{align*}
\psi_0=\vphi+constant.
\end{align*}
In order to prove this identity, we apply the formula
\begin{align*}
\int_X (\vphi_\eps-\psi_\eps)(\omega_{\psi_\eps}^n-\omega_{\vphi_\eps}^n)
=\int_X i\p(\vphi_\eps-\psi_\eps)\wedge\bar\p(\vphi_\eps-\psi_\eps)\wedge \sum_{k=0}^{n-1}\om^k_{\psi_\eps}\wedge\om_{\vphi_\eps}^{n-1-k}.
\end{align*}
Cutting out a small neighbourhood $D_\delta$ for arbitrary $\delta>0$, we have the RHS 
\begin{align*}
\geq \int_{X\setminus D_\delta} i\p(\vphi_\eps-\psi_\eps)\wedge\bar\p(\vphi_\eps-\psi_\eps)\wedge \om_{\psi_\eps}^{n-1}.
\end{align*}
From \textbf{Step 4}, we further have
\begin{align*}
\geq C \int_{X\setminus D_\delta} i\p(\vphi_\eps-\psi_\eps)\wedge\bar\p(\vphi_\eps-\psi_\eps)\wedge \om_{0}^{n-1}.
\end{align*}
While, LHS converges to $0$ as $\eps\rightarrow 0$. Thus $\delta$ is arbitrary, we have $\psi_0=\vphi$ up to a constant on $M$.

\textbf{Step 6.} 
Proposition 2.5 in \cite{MR3264766} tells us that $\om_{\psi_\eps}$ Gromov-Hausdorff converges to $\om_\vphi$ as $\eps\rightarrow0$. Cheeger-Colding \cite{MR1815410} implies there is a minimising geodesic in $M$ such that its length is close to the diameter of $X$.
Then when $t\geq \tau$, $\om_\vphi$ has diameter bound $\pi\sqrt\frac{m-1}{\tau}$ due to Myers' theorem. Thus we could choose small $\eps$ such that the sequence has diameters bounded by $2\pi\sqrt\frac{m-1}{\tau}$.
\end{proof}

The next Sobolev inequality along the continuity path will be used in this paper.

\begin{thm}\label{Sobolev inequalitybig}
Let $\om_\vphi$ lies in the continuity path $\{\om_{\vphi(t)};\tau< t \leq 1\}$.
For any $1\leq q<m$, there exists a uniform
constant $A=A(n,q,V,\tau)$ such that for any $w\in W^{1,q}(M)$,
\begin{align*}
\|w\|_{p;\om_{\vphi(t)}} \leq
A\|\nabla
w\|_{q;\om_{\vphi(t)}}+Vol(M,\om_{\vphi(t)})^{\frac{-1}{m}}\|w\|_{q;\om_{\vphi(t)}},
\end{align*} where the constant $p$ is defined by  $\frac{1}{p}+\frac{1}{m}=\frac{1}{q}.$\end{thm}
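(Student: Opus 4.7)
The plan is to derive the Sobolev inequality on $(M,\omega_{\varphi(t)})$ by passing to the limit in a uniform Sobolev inequality on the smooth approximating sequence provided by Theorem \ref{approximation Theorem}. The key observation is that once we know a smooth regularisation $\omega_{\psi_\varepsilon}$ of $\omega_{\varphi(t)}$ has non-negative Ricci curvature and uniformly bounded diameter, a classical Riemannian Sobolev inequality delivers a uniform constant on the approximating side, which we then transport to the conic metric as $\varepsilon\to 0$.

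First I would apply Theorem \ref{approximation Theorem} to produce, for each $t\in[\tau,1]$, a sequence of smooth K\"ahler metrics $\omega_{\psi_\varepsilon}$ on $X$ with $\mathrm{Ric}(\omega_{\psi_\varepsilon})\geq 0$ and $\mathrm{diam}(X,\omega_{\psi_\varepsilon})\leq 2\pi\sqrt{(m-1)/\tau}=:D_0$, while the total volume $V$ is preserved because $[\omega_{\psi_\varepsilon}]=\Omega$. On each such smooth compact Riemannian manifold I would then invoke the Sobolev inequality of Gallot (and Croke--Ilias) for manifolds with $\mathrm{Ric}\geq 0$ and bounded diameter: for $1\leq q<m$ and $\tfrac{1}{p}=\tfrac{1}{q}-\tfrac{1}{m}$, there is a constant $A=A(n,q,V,D_0)$ such that
\begin{align*}
\|w\|_{p;\omega_{\psi_\varepsilon}} \leq A\,\|\nabla w\|_{q;\omega_{\psi_\varepsilon}} + V^{-1/m}\,\|w\|_{q;\omega_{\psi_\varepsilon}}
\end{align*}
for every $w\in W^{1,q}(X,\omega_{\psi_\varepsilon})$. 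Since $D_0$ depends only on $n$ and $\tau$, the constant $A$ depends only on $n,q,V,\tau$, as required.

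Second, I would pass to the limit $\varepsilon\to 0$. From the construction in Theorem \ref{approximation Theorem}, $\psi_\varepsilon\to\varphi(t)$ in $C^{\alpha'}_{\mathrm{loc}}(M)$, so both the metric tensor and its inverse converge uniformly on compact subsets of $M$. For any test function $w\in C^\infty_c(M)$ whose support avoids $D$, dominated convergence gives $\|w\|_{r;\omega_{\psi_\varepsilon}}\to\|w\|_{r;\omega_{\varphi(t)}}$ and $\|\nabla w\|_{q;\omega_{\psi_\varepsilon}}\to\|\nabla w\|_{q;\omega_{\varphi(t)}}$, so the inequality descends to $(M,\omega_{\varphi(t)})$ with the same constant $A$. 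The last step is to extend from $C^\infty_c(M)$ to $W^{1,q}(M,\omega_{\varphi(t)})$ by density, using cutoff functions of the type $\rho_\varepsilon$ from Lemma \ref{Bo}, whose $W^{1,q}$ contribution vanishes in the limit by virtue of the codimension-two nature of $D$ together with the logarithmic profile of $\rho_\varepsilon$.

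The main obstacle is precisely this last density step: one must verify that truncating a general $W^{1,q}(M,\omega_{\varphi(t)})$ function by a sequence of cutoff functions supported away from $D$ does not produce uncontrollable gradient contributions across the conic locus. This is the point at which the model cone structure, and in particular the estimates on $\partial\bar\partial\rho_\varepsilon$ established in Lemma \ref{Bo}, become essential, the key integrability being that $|\partial\rho_\varepsilon|^{q}$ is integrable and tends to zero since $\partial\rho_\varepsilon$ is comparable to $\varepsilon\,(\log\tau)^{-1}(\partial s/s)$.
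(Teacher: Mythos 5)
Your proposal follows essentially the same route as the paper: invoke Theorem \ref{approximation Theorem} to produce smooth approximating metrics with non-negative Ricci curvature and uniformly bounded diameter, apply the Croke--Gallot--Ilias Sobolev inequality on each approximant to get a constant depending only on $n,q,V,\tau$, and then pass to the limit $\varepsilon\to 0$. The only organizational difference is that you first establish the inequality for $w\in C^\infty_c(M)$ and then appeal to density via the cutoffs $\rho_\varepsilon$, whereas the paper applies the smooth-metric inequality directly to a general $w\in W^{1,q}(M,\omega_{\varphi(t)})$ (noting it also lies in $W^{1,q}$ of the approximants) and invokes dominated convergence plus the fact that $D$ has measure zero; both versions leave the $W^{1,q}$-density across the conic locus at roughly the same level of detail.
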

\begin{proof}
We cite the Sobolev inequality by Croke \cite{MR608287}, Gallot \cite{MR999971,MR976219} and Ilias \cite{MR699492}. Let  $(M,g)$ be a $m$-dimensional compact
Riemannian manifold with Ricci curvature, volume and diameter satisfying
\begin{align}
Ric\geq (m-1)kg, \quad Vol(g)\geq V \text{ and } diam(M,g)\leq d.
\end{align}In which, $k$, $V>0$, $d>0$ are real numbers.
For any $1\leq q<m$, there exists
constant $A=A(n,q,k,V,d)$ such that for any $w\in W^{1,q}(M,g)$,
\begin{align*}
\|w\|_{p;g} \leq
A\|\nabla
w\|_{q;g}+Vol(M,g)^{\frac{-1}{m}}\|w\|_{q;g},
\end{align*} where the constant $p$ is defined by  $\frac{1}{p}+\frac{1}{m}=\frac{1}{q}.$

When $\tau< t\leq1$, given $w\in  W^{1,q}(M,\om_\vphi)$, we see that $w$ also stays in $W^{1,q}(M,\om^i_\vphi)$. Then we apply this inequality to the approximation sequence $\om^i_\vphi$ which have uniformly non-negative Ricci curvature and uniform diameter bounded by $d=2\pi\sqrt{\frac{m-1}{\tau}}$. Actually, on the regular part $M$, our sequence smoothly converges to $\vphi$ from the construction, so the conclusion follows from Lebesgue's dominated convergence theorem and the fact that $D$ is a measure zero set.
\end{proof}


\subsection{Apriori estimates}\label{Apriori estimates}

\subsubsection{Zero order estimate}\label{Zero order estimate title}

We prove the zero order estimate
by the adaption of the De Giorgi iteration, which is an improvement of Proposition 2.8 in \cite{MR3229802} by H. Li and the second author. 

\begin{prop}\label{inte est}
Assume that we have the following Sobolev inequality with respect to a K\"ahler cone metric $\om$, for any $w \in W^{1,2}(\om)$,
$$\|w\|^2_{2^\ast}\leq C_S(\om)(\|\Na w\|^2_2+\|w\|^2_2).$$
We say $v$ is a $W^{1,2} $ sub-solution of the linear equation in the weak sense, i.e. for any $\eta\in C^{2,\a}_\b$,
\begin{align}\label{linear equ ge sub}
\int_M(\p v, \p\eta)_{\om} \om^n\leq -\int_Mf\eta\om^n.
\end{align} Moreover, we assume that $f\in L^{\frac{p}{2}}$
with $p> 2n$ and let $$\tilde v=v-\frac{1}{V}\int_M v\,\om^n,$$ then there exits a constant $C$ depending on the Sobolev constant $C_S(\om)$ with respect to $\om$ such that
\begin{align}\label{glob bound claim}
\sup_M \tilde v \leq C(\| f \|_{p^\ast} +\| \tilde v\|_{1}).
 \end{align}
In which, $p^\ast=\frac{2np}{2n+p}$ and all the $L^p$-norms, including in the following proof, are regarding to the mearsure $\frac{\om^n}{V}$.
\end{prop}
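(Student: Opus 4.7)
The plan is to run a standard De Giorgi / Stampacchia iteration on the super-level sets of $\tilde v$, exploiting the Sobolev inequality together with the integrability of $f$. Since adding a constant preserves sub-solution status, I work throughout with the mean-zero function $\tilde v$.

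First I would test the weak inequality \eqref{linear equ ge sub} against $\eta = v_k := (\tilde v - k)_+$ for each $k \geq 0$. A priori $v_k$ is only Lipschitz, so this requires an approximation by $C^{2,\alpha}_\beta$ functions, which goes through by the density/weak-theory framework of Calamai--Zheng already used in the paper. Since $\p \tilde v = \p v_k$ on $A(k) := \{\tilde v > k\}$ and $v_k \geq 0$, one obtains
\[
\int_M |\p v_k|^2_{\om}\, \om^n \;\leq\; \int_{A(k)} |f|\, v_k\, \om^n .
\]

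Next I would combine H\"older and the Sobolev inequality. Writing $\tfrac{1}{p^\ast}+\tfrac{1}{2^\ast}+\tfrac{1}{s}=1$ with $2^\ast=\tfrac{2n}{n-1}$, a short computation forces $\tfrac{1}{s}=\tfrac{p-2}{2p}$, so
\[
\int_{A(k)} |f|\, v_k\, \om^n \;\leq\; \|f\|_{p^\ast}\,\|v_k\|_{2^\ast}\,|A(k)|^{(p-2)/(2p)} .
\]
The Sobolev inequality yields $\|v_k\|_{2^\ast}^2 \leq C_S(\|\Na v_k\|_2^2+\|v_k\|_2^2)$, while H\"older gives $\|v_k\|_2^2 \leq \|v_k\|_{2^\ast}^2 |A(k)|^{1/n}$. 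Choosing $k \geq k_0$ so that $C_S |A(k_0)|^{1/n} \leq \tfrac12$ (by Chebyshev this holds for $k_0 \sim C_S^{\,n}\|\tilde v\|_1$), the low-order term is absorbed and one gets
\[
\|v_k\|_{2^\ast} \;\leq\; C\,\|f\|_{p^\ast}\,|A(k)|^{(p-2)/(2p)} .
\]

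Finally I would close the iteration. For $h>k$, the standard inequality $(h-k)|A(h)|^{1/2^\ast}\leq \|v_k\|_{L^{2^\ast}(A(h))}$ combined with the preceding estimate produces
\[
|A(h)| \;\leq\; \Bigl(\frac{C\|f\|_{p^\ast}}{h-k}\Bigr)^{\!2^\ast}\,|A(k)|^{\gamma},\qquad \gamma := \frac{n(p-2)}{p(n-1)} .
\]
A direct check shows $\gamma>1$ iff $p>2n$, which is exactly the hypothesis; Stampacchia's lemma then gives $|A(k_0+d_\infty)|=0$ with $d_\infty \leq C'\|f\|_{p^\ast}$. Summing, $\sup_M \tilde v \leq k_0+d_\infty \leq C(\|\tilde v\|_1 + \|f\|_{p^\ast})$, as claimed.

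The main obstacle is not the exponent arithmetic (which is mechanical and dictates the condition $p>2n$), but rather handling the singular behaviour of $\om$ across $D$: one must verify that the Lipschitz truncations $v_k$ really are legitimate test functions in the weak formulation, and that the Sobolev inequality can be iterated without picking up boundary terms on $D$. These points are settled by the approximation and density framework already in place, so the scheme reduces to classical Stampacchia iteration on the regular part $M$.
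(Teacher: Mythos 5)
Your proposal is correct and follows essentially the same De Giorgi--Stampacchia iteration as the paper's proof: test against $(\tilde v - k)_+$, apply H\"older and the Sobolev inequality, absorb the lower-order term once the superlevel set is small, and close with the iteration lemma. The one place you diverge is in choosing $k_0$: you obtain $|A(k_0)|\le (2C_S)^{-n}$ directly from the $L^1$-Chebyshev bound $k_0|A(k_0)|\le\|\tilde v\|_1$, so $k_0\sim C_S^{\,n}\|\tilde v\|_1$ suffices; the paper instead goes through an intermediate $L^2$ bound $\|\tilde v\|_2\le C(\|f\|_{p^\ast}+\|\tilde v\|_1)$, obtained by a two-case analysis (comparing $\|\tilde v\|_2$ with $\|\Na\tilde v\|_2$ and invoking either the equation or an interpolation inequality), and then applies $L^2$-Chebyshev. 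Your shortcut is cleaner and yields the same conclusion; the paper's route produces a slightly different explicit constant but is otherwise equivalent.
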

\begin{proof}
We denote by $u=(\tilde
v-k)_+$  the positive part of  $\tilde v-k$ for any constant $k$ and 
set $$A(k)=\{x\in M\vert \tilde v(x)>k\}$$ where $u$ is positive. 

We first substitute $\eta$ in \eqref{linear
equ ge sub} with $u$ on both sides
\begin{align}\label{linear equ ge1}
\int_M |\Na u|^2\om^n\leq -\int_M u f \om^n.
\end{align}
Then applying the H\"older's inequality to the right hand side, we get its upper bound
\begin{align}\label{linearRHS}
\|u\|_{2^\ast}\cdot \|f\|_{p^\ast}\cdot |A(k)|^r.
\end{align}
In which, $$m=2n,\quad 2^\ast=\frac{2m}{m-2},\quad p^\ast=\frac{mp}{m+p},\quad r=\frac{1}{2}-\frac{1}{p}.$$

We next use the Sobolev inequality with respect to $\om$,
$$\|u\|^2_{2^\ast}\leq C_S(\om)(\|\Na u\|^2_2+\|u\|^2_2).$$
Here the norms are measured with respect to the metric $\om$. The second term in the right hand side of the Sobolev inequality is bounded by using the H\"older inequality $$\|u\|_2\leq \|u\|_{2^\ast}\cdot
|A(k)|^\frac{1}{m}.$$ 
While, the first term in the right hand side of the Sobolev inequality is estimated by using \eqref{linearRHS}, which is derived from the equation. Thus we obtain that  
\begin{align} \label{eqa2}
\|u\|^2_{2^\ast}\leq
C_S(\om)(\|u\|_{2^\ast}\cdot\|f\|_{p^\ast}\cdot |A(k)|^r+\|u\|^2_{2^\ast}\cdot
|A(k)|^\frac{2}{m}).
\end{align}

We then show that how to choose a $k_0$ 
such that for any $k\geq k_0$, 
\begin{align}\label{areabound}
|A(k)|^\frac{2}{m}\leq
\frac{1}{2C_S(\om)}. 
\end{align}
In order to choose $k_0$, we separate two cases.
On case is 
\begin{align}
\|\tilde v\|_2^2\leq \|\Na \tilde v\|_2^2 .\label{eqa1} 
\end{align} The
right hand side of (\ref{eqa1}) is bounded from the
inequality \eqref{linear equ ge sub} with $\eta=\tilde v$ via applying the
H\"older inequality to its right hand side, $$\|\Na\tilde v\|_2^2\leq \|\tilde
v\|_2\|f\|_2.$$ Thus we have
 $$\|\tilde v\|_2 \leq \|f\|_2\leq V^{\frac{2mp}{mp-2m-2p}}\cdot \|f\|_{p^\ast}.$$ 
 
The other case is \begin{align*}
\|\tilde v\|_2^2\geq \|\Na \tilde v\|_2^2 .
\end{align*}
The Sobolev inequality immediately implies that
\begin{align*}\|\tilde v\|^2_{2^\ast}\leq 2C_S(\om)\|\tilde v\|^2_2.
\end{align*}
We then apply the interpolation inequality to the right hand side of the inequality above,
\begin{align*}
\|\tilde v\|^2_{2^\ast}\leq 2C_S(\om)\|\tilde v\|^{\frac{2}{n+1}}_1 \|\tilde v\|_{2^\ast}^{\frac{2n}{n+1}}.
\end{align*}
Thus 
\begin{align*}
\|\tilde v\|_{2}\leq V^{\frac{1}{n}}\|\tilde v\|_{2^\ast}\leq 2C_S(\om)\|\tilde v\|_1 .
\end{align*}

Combining two cases, we see that 
\begin{align*}
\|\tilde v\|_{2}\leq C_5(\|f\|_{p^\ast}+\|\tilde v\|_1).
\end{align*}
In which, $C_5=C_S(\om)+V^{\frac{2mp}{mp-2m-2p}}$.
From the definition, $$k_0^2 |A(k_0)|\leq
\|\tilde v\|_2^2.$$ 
Thus we choose $$k_0^2=C_5^2(\|f\|_{p^\ast}+\|\tilde v\|_1)^2
(2C_S(\om))^{\frac{m}{2}}$$ so that $$|A(k_0)|^\frac{2}{m}\leq\frac{1}{2C_S(\om)}$$ and \eqref{areabound} is proved.

For any $k\geq k_0$, we absorb the second term of right hand side of \eqref{eqa2} by the left hand side, when applying \eqref{areabound}, thus we obtain,
$$\|u\|_{2^\ast}\leq 2C_S(\om)\cdot\|f\|_{p^\ast}\cdot |A(k)|^{r}.$$
The inverse inequality follows from the definition of $A(h)$ and $u$, when $h>k\geq k_0$,
$$\|u\|_{2^\ast}\geq (h-k)\cdot |A(h)|^{\frac{1}{2^\ast}}.$$
At last, combining these two inequalities to obtain the iteration inequality
$$(h-k)\cdot |A(h)|^{\frac{1}{2^\ast}}\leq  2C_S(\om) \cdot \|f\|_{p^\ast}\cdot |A(k)|^{r}.$$
and then applying the iteration lemma (see \cite{MR1814364}),
we have $$A(k_0+d)=0$$ for a constant $d= 2C_S(\om) \|f\|_{p^\ast}$. In other words,
\begin{align*}\tilde v&\leq k_0 + d\\
&\leq C_5(\|f\|_{p^\ast}+\|\tilde v\|_1)
(2C_S(\om))^{\frac{m}{4}}+ 2C_S(\om) \|f\|_{p^\ast}.
\end{align*} Therefore, the proposition is proved.
\end{proof}

We then apply the Proposition above to the following equation
\begin{align}\label{upperequation}
n+\tri_\om\vphi\geq0.
\end{align}

\begin{cor}(Upper estimate)\label{zeroup}
There exists a constant $C$ depending on $V$, $n$, the Sobolev and Poincar\'e constant of the background metric $\om$ such that
\begin{align*}
\sup_M\varphi-\frac{1}{V}\int_M\varphi\omega^n\leq
C.
\end{align*}
\end{cor}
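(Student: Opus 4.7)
The plan is to reduce the upper estimate to a single application of Proposition \ref{inte est}, followed by an independent $L^1$ bound on the mean-normalized potential.

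First I would rewrite the hypothesis $n + \Delta_\omega\varphi \geq 0$ in weak subsolution form: integrating by parts against any nonnegative test function $\eta \in C^{2,\alpha}_\beta$ gives
\[
\int_M(\partial\varphi,\partial\eta)_\omega\,\omega^n \leq n\int_M \eta\,\omega^n.
\]
Thus $v = \varphi$ is a $W^{1,2}$ subsolution with source $f \equiv -n$. Since $f$ is constant, $\|f\|_{p^{*}} = n$ for any $p>2n$ with respect to the probability measure $\omega^n/V$. Proposition \ref{inte est} then immediately produces
\[
\sup_M\tilde\varphi \leq C_1\bigl(n + \|\tilde\varphi\|_1\bigr),
\]
where $\tilde\varphi = \varphi - V^{-1}\int_M\varphi\,\omega^n$ has mean zero and $C_1$ depends on $V$, $n$, and $C_S(\omega)$.

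To close the estimate, I need $\|\tilde\varphi\|_1 \leq C_2(V,n,C_S,C_P)$. The mean-zero condition forces $\|\tilde\varphi\|_1 = 2\|\tilde\varphi^{+}\|_1$, so it suffices to bound $\|\tilde\varphi^{+}\|_1$. Testing the weak form against $\eta = \tilde\varphi^{+} \geq 0$ yields the one-sided energy estimate
\[
\int_M|\nabla\tilde\varphi^{+}|^2_\omega\,\omega^n \leq n\int_M \tilde\varphi^{+}\,\omega^n.
\]
Combining this with the Poincar\'e-Wirtinger inequality applied to $\tilde\varphi^{+}$ (whose integral mean is exactly $\|\tilde\varphi^{+}\|_1$) and iterating via Sobolev interpolation between $L^1$, $L^2$, and $L^{2^{*}}$ in the De~Giorgi spirit of the proof of Proposition \ref{inte est} itself should yield an absolute bound $\|\tilde\varphi^{+}\|_1 \leq C_2$.

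The main obstacle is precisely this $L^1$ bound. Since the hypothesis is one-sided, namely $\Delta_\omega\varphi$ is bounded below but not above, the subsolution energy estimate only controls the gradient of $\tilde\varphi^{+}$, not of $\tilde\varphi^{-}$; converting this one-sided control into an absolute bound on $\|\tilde\varphi\|_1$ via Sobolev-Poincar\'e iteration, rather than through complex-analytic tools such as capacity or the Green's function of $\omega$-plurisubharmonic functions, is the technical crux of the argument. The naive substitution $\|\tilde\varphi\|_1 \leq 2\sup_M\tilde\varphi$ is self-referential and does not close up unless $C_1$ can be made small, so the Poincar\'e constant of the background cone metric must genuinely enter through the energy inequality.
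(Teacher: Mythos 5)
Your reduction to Proposition~\ref{inte est} with $f\equiv -n$, followed by the problem of bounding $\|\tilde\varphi\|_1$, follows the paper's outline exactly, and your worry about the one-sidedness is well-founded: the weak inequality $\int_M(\partial\tilde\varphi,\partial\eta)_\omega\,\omega^n\leq n\int_M\eta\,\omega^n$, derived from the pointwise bound $n+\Delta_\omega\varphi\geq 0$, is valid only for $\eta\geq 0$. Testing against $\eta=\tilde\varphi^{+}$ gives $\|\nabla\tilde\varphi^{+}\|_2^2\leq n\|\tilde\varphi^{+}\|_1$, and, as you observe, the Poincar\'e--Wirtinger inequality for $\tilde\varphi^{+}$ then re-introduces the mean $\|\tilde\varphi^{+}\|_1/V$ with a coefficient that cancels exactly, so the iteration you sketch does not close. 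This is a genuine gap, and the Sobolev interpolation you gesture at cannot close it either without an a priori smallness assumption on $C_S(\omega)$.

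You should also be aware that the paper's written proof contains the same unrepaired step: it substitutes the sign-changing $\eta=\tilde\varphi$ into the weak inequality and asserts $\|\nabla\tilde\varphi\|_2^2\leq n\|\tilde\varphi\|_1$, but a literal substitution would give $\int_M|\nabla\tilde\varphi|^2\omega^n\leq n\int_M\tilde\varphi\,\omega^n=0$, which cannot be intended. The clean repair is the classical Green's-function estimate for the fixed background $\omega$: normalize $G_\omega$ by $\int_M G_\omega(x,\cdot)\,\omega^n=0$ and set $\gamma:=-\inf G_\omega\geq 0$. Since $\int_M\Delta_\omega\varphi\,\omega^n=0$, the Green representation gives $\tilde\varphi(x)=-\int_M\bigl(G_\omega(x,\cdot)+\gamma\bigr)\Delta_\omega\varphi\,\omega^n$, and combining $G_\omega+\gamma\geq 0$ with $-\Delta_\omega\varphi<n$ yields $\tilde\varphi(x)\leq n\gamma V$ directly, bypassing Proposition~\ref{inte est} and the $L^1$ bound altogether. (Equivalently, one can invoke the standard $L^1$-compactness of $\omega$-plurisubharmonic functions normalized by their supremum to bound $\|\tilde\varphi\|_1$ and then feed that into Proposition~\ref{inte est}.) Either route replaces the Poincar\'e constant by a quantity tied to the Green's function of $\omega$, which is harmless for a fixed background metric; this is precisely the ``complex-analytic'' input you mention and then set aside, and it does appear to be genuinely necessary here.
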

\begin{proof}
Let $\tilde\vphi=\vphi-\frac{1}{V}\int_M\vphi\om^n$. The \eqref{upperequation} is well-defined on the regular part $M$, we need to transform it into the integration form.
Since $\vphi\in C^{2,\a}_\b$, the integration by parts, Lemma 2.1 in Calamai-Zheng \cite{Calamai-Zheng} provides that \eqref{upperequation} could be transformed to, for any $\eta\in C^{2,\a}_\b$,
\begin{align}\label{intelowerequation}
\int_M (\p\tilde\vphi,\p\eta)_{\om} \om^n\leq n \int_M \eta\om^n.
\end{align}
Thus Proposition \ref{inte est} implies that there is a constant $C$ depending on the Sobolev constant of $\om$ such that 
\begin{align*}
\sup_M \vphi-\frac{1}{V}\int_M\vphi\,\om^n\leq C(n +\| \tilde \vphi\|_{1:\om}).
 \end{align*}
We would prove that $\| \tilde \vphi\|_{1;\om}$ is bounded.
Replacing $\eta$ with $\tilde\vphi$ in \eqref{intelowerequation}, we have
\begin{align*}
||\p\tilde\vphi||_{2;\om}^2\leq n ||\tilde\vphi||_{1;\om}.
\end{align*}
The Poincar\'e inequality implies that there is Poincar\'e constant $C_P$ such that
\begin{align*}
||\tilde\vphi||_{2;\om}^2\leq C_P ||\p\tilde\vphi||_{2;\om}^2.
\end{align*}
While, the H\"older inequality gives that
\begin{align*}
||\tilde\vphi||_{1;\om}\leq V^{\frac{1}{2}}||\tilde\vphi||_{2;\om}.
\end{align*}
Combining all these three inequalities together, we have $||\tilde\vphi||_{2;\om}$ is bounded, then from the the last inequality, so is $||\tilde\vphi||_{1;\om}$. Therefore, we have proved \eqref{zeroup}.
\end{proof}

We then apply Proposition \ref{inte est} to the following equation
\begin{align}\label{lowerequation}
n-\tri_\vphi\vphi>0.
\end{align}
\begin{cor}(Rough lower estimate)\label{zerolowbig}
Assume that $\vphi\in C^{2,\a}_\b$ and $\om_\vphi$ lies in the continuity path $\{\om_{\vphi(t)};\tau< t \leq 1\}$.
There exists a constant $C$ depending on $\sup_{\tau<t\leq 1} C_S(\om_{\vphi(t)}), V,n$ such that 
\begin{align*}
\inf_M\vphi-\int_M\vphi\,\om_\vphi^n\geq - C(1+\| \tilde\vphi\|_{1;\om_\vphi}),\quad t\in[\tau,1].
\end{align*}
\end{cor}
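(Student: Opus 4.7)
The plan is to parallel the proof of Corollary \ref{zeroup}, but with the reference metric in the De Giorgi iteration taken to be the evolving metric $\om_\vphi$ rather than the background $\om$. The only additional ingredient is the uniform Sobolev bound along the continuity path, already established in Theorem \ref{Sobolev inequalitybig}.

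First I would rewrite the pointwise inequality in weak form. Since $\om_\vphi$ is a positive $(1,1)$-form on $M$,
$$ n - \tri_\vphi \vphi \;=\; \tr_{\om_\vphi}\om \;>\; 0 \quad \text{on } M. $$
Setting $v := -\vphi$, this reads $\tri_\vphi v > -n$. The $C^{2,\alpha}_\beta$ regularity of $\vphi$ allows integration by parts across the divisor (Lemma 2.1 in Calamai--Zheng \cite{Calamai-Zheng}), so one obtains the weak subsolution inequality
$$ \int_M (\partial v, \partial \eta)_{\om_\vphi}\, \om_\vphi^n \;\leq\; n\int_M \eta\, \om_\vphi^n, \qquad \eta \in C^{2,\alpha}_\beta. $$

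Next I would apply Proposition \ref{inte est} with the reference K\"ahler cone metric taken to be $\om_\vphi$, the subsolution equal to $v=-\vphi$, and $f \equiv -n$ (which is trivially in $L^{p/2}$ for any $p>2n$). The required Sobolev inequality for $\om_\vphi$ is supplied with constant $C_S(\om_\vphi) \leq \sup_{\tau<t\leq 1} C_S(\om_{\vphi(t)})$ by Theorem \ref{Sobolev inequalitybig}. The output of Proposition \ref{inte est} is
$$ \sup_M \tilde v \;\leq\; C\bigl(\|f\|_{p^\ast}+\|\tilde v\|_{1;\om_\vphi}\bigr) \;\leq\; C\bigl(1+\|\tilde\vphi\|_{1;\om_\vphi}\bigr), $$
where all norms are taken with respect to $\om_\vphi^n/V$. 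Since $\sup_M \tilde v = -\bigl(\inf_M\vphi - \tfrac{1}{V}\int_M\vphi\,\om_\vphi^n\bigr)$ and $\|\tilde v\|_{1;\om_\vphi}=\|\tilde\vphi\|_{1;\om_\vphi}$, rearranging yields the claimed bound.

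The only real point requiring care is the verification that the De Giorgi iteration of Proposition \ref{inte est} can be run using the singular metric $\om_\vphi$ in place of a fixed background, but all inputs (weak formulation, Sobolev inequality) are in place. Unlike the upper bound of Corollary \ref{zeroup}, we cannot close up the $L^1$-term by a Poincar\'e argument: substituting $\eta = \tilde v$ into the weak inequality produces only $\|\partial v\|_{2;\om_\vphi}^2 \leq n\int \tilde v\,\om_\vphi^n$, and a uniform Poincar\'e constant for $\om_\vphi$ independent of $\vphi$ has not yet been extracted from the path. This is precisely why the estimate here is dubbed \emph{rough}: the term $\|\tilde\vphi\|_{1;\om_\vphi}$ must be retained on the right-hand side and controlled subsequently, typically by combining Corollary \ref{zeroup} with the $I-J$ monotonicity of Lemma \ref{IJdecreasing}.
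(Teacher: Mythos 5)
Your argument is correct and essentially identical to the paper's proof: both pass from the pointwise inequality $n-\tri_\vphi\vphi>0$ to a weak subsolution inequality via the integration-by-parts lemma from Calamai--Zheng, then invoke Proposition~\ref{inte est} with the evolving metric $\om_\vphi$ in place of the fixed background, feeding in the uniform Sobolev constant from Theorem~\ref{Sobolev inequalitybig}. Your closing remark about why the $L^1$-term cannot be absorbed by a Poincar\'e argument for $\om_\vphi$ correctly anticipates the role that Corollary~\ref{zeroup} together with Lemma~\ref{IJdecreasing} plays in Proposition~\ref{Zero order estimate}.
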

\begin{proof}
Since when $\vphi\in C^{2,\a}_\b$, we apply integration by parts in Calamai-Zheng \cite{Calamai-Zheng}, to \eqref{lowerequation}, then obtain for any $\eta\in C^{2,\a}_\b$,
\begin{align*}
\int_M (\p\vphi,\p\eta)_{\om_\vphi} \om_\vphi^n>-n \int_M \eta\om_\vphi^n.
\end{align*}
We use Proposition \ref{inte est}, replacing $\om$ with $\om_\vphi$ and obtain the lower bound of $\tilde\vphi=\vphi-\frac{1}{V}\int_M\vphi\om^n_\vphi$.

For any $\tau\leq t\leq 1$ along the continuity path, we have the Sobolev inequalities of $\om_{\vphi(t)}$ (Theorem \ref{Sobolev inequalitybig}) with uniform Sobolev constant, i.e. the Sobolev constants $C_S(\om_{\vphi(t)})$ have a uniform bound. Thus we have obtained the conclusion.
\end{proof}

\begin{prop}\label{Zero order estimate}(Zero order estimate)
We are given a small fixed positive number $0<\tau<1$ to be determined in the proof.
There exists a constant $C$ depending on $\tau, V, n$, the Sobolev and Poincar\'e constant of the fixed background metric $\om$, and the uniform Sobolev constant of $\om_{\vphi(t)}$ for $\tau<t\leq 1$ i.e. $\sup_{\tau<t\leq 1} C_S(\om_{\vphi(t)})$ such that
\begin{align}\label{zeroroughbig}
\osc(\varphi)\leq C\cdot(I(\om, \om_{\varphi})+1),\quad t\in[0,1].
\end{align}
\end{prop}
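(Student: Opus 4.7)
My plan is to obtain the oscillation bound by combining the two one-sided mean-value estimates already established in Corollaries \ref{zeroup} and \ref{zerolowbig}, and then close the loop by controlling the auxiliary $L^{1}$ term linearly in the Aubin functional $I$ itself. Writing $\bar\vphi_\om := V^{-1}\int_M\vphi\,\om^n$ and $\bar\vphi_{\om_\vphi} := V^{-1}\int_M\vphi\,\om_\vphi^n$, the definition of $I$ yields the identity $I(\om,\om_\vphi)=\bar\vphi_\om-\bar\vphi_{\om_\vphi}$, so the oscillation telescopes as
$$\osc(\vphi)=(\sup_M\vphi-\bar\vphi_\om)+I(\om,\om_\vphi)+(\bar\vphi_{\om_\vphi}-\inf_M\vphi).$$
Corollary \ref{zeroup} controls the first difference by a constant $C_1$ depending only on the fixed background metric. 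For $t\in[\tau,1]$, Corollary \ref{zerolowbig} together with the uniform Sobolev bound coming from Theorem \ref{Sobolev inequalitybig} controls the last difference by $C_2(1+\|\tilde\vphi\|_{1;\om_\vphi})$, with $\tilde\vphi := \vphi-\bar\vphi_{\om_\vphi}$. Inserting both into the display gives
$$\osc(\vphi)\leq C_1+I(\om,\om_\vphi)+C_2\bigl(1+\|\tilde\vphi\|_{1;\om_\vphi}\bigr).$$

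The decisive step, which I expect to be the main obstacle, is how to absorb $\|\tilde\vphi\|_{1;\om_\vphi}$ without reintroducing $\osc(\vphi)$ on the right. A naive estimate $\|\tilde\vphi\|_{1;\om_\vphi}\leq \osc(\vphi)$ is circular and fails to close. The trick I would use is to recentre $\tilde\vphi$ around $\sup_M\vphi$ rather than around $\bar\vphi_{\om_\vphi}$, writing
$$\tilde\vphi=(\vphi-\sup_M\vphi)+(\sup_M\vphi-\bar\vphi_{\om_\vphi}),$$
so that the first summand is non-positive and the second is a constant. Then $|\tilde\vphi|\leq(\sup_M\vphi-\vphi)+(\sup_M\vphi-\bar\vphi_{\om_\vphi})$, and integration against $V^{-1}\om_\vphi^n$ turns the first term into exactly $\sup_M\vphi-\bar\vphi_{\om_\vphi}$. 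This produces
$$\|\tilde\vphi\|_{1;\om_\vphi}\leq 2\bigl(\sup_M\vphi-\bar\vphi_{\om_\vphi}\bigr)=2\bigl(\sup_M\vphi-\bar\vphi_\om\bigr)+2\,I(\om,\om_\vphi)\leq 2C_1+2\,I(\om,\om_\vphi),$$
i.e.\ a bound by $I$ alone. Substituting back collapses the previous display to $\osc(\vphi)\leq C(1+I(\om,\om_\vphi))$ on $[\tau,1]$.

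Finally, for $t\in[0,\tau]$ the apriori $C^{2,\a}_\b$ control produced by the short-time implicit function theorem argument on $[0,1)$, which underlies Proposition \ref{weak unique}, already forces a uniform bound on $\vphi(t)$ and in particular on $\osc(\vphi)$; absorbing this into the constant $C$ (which is permitted since the constant is allowed to depend on $\tau$) extends the inequality to all of $[0,1]$. The only subtle point throughout is the one-sided decomposition used to control $\|\tilde\vphi\|_{1;\om_\vphi}$: centring at $\sup_M\vphi$ rather than $\inf_M\vphi$ is what converts a circular $L^1$ bound into the sharp identity-type bound by $\sup_M\vphi-\bar\vphi_{\om_\vphi}$, which in turn is pinned down purely in terms of the Aubin functional.
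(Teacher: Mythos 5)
Your proposal is correct and follows essentially the same route as the paper: both split $\osc(\vphi)$ via the identity $I(\om,\om_\vphi)=\bar\vphi_\om-\bar\vphi_{\om_\vphi}$, invoke Corollaries \ref{zeroup} and \ref{zerolowbig} for the two one-sided gaps, and close the $\|\tilde\vphi\|_{1;\om_\vphi}$ term by the same computation $\|\tilde\vphi\|_{1;\om_\vphi}\leq 2\sup_M\tilde\vphi=2\bigl(\sup_M\vphi-\bar\vphi_\om\bigr)+2I(\om,\om_\vphi)$, which uses only the zero-mean condition $\int_M\tilde\vphi\,\om_\vphi^n=0$ together with \corref{zeroup} (your re-centering at $\sup_M\vphi$ is exactly the paper's decomposition $\tilde\vphi=-(\sup_M\tilde\vphi-\tilde\vphi)+\sup_M\tilde\vphi$ written in different variables). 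The treatment of $t\in[0,\tau]$ is also the same, relying on the continuity near $t=0$ supplied by Proposition \ref{weak unique}.
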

\begin{rem}
We do not need the Poincar\'e constant of $\om_\vphi$ here.
\end{rem}
\begin{proof} 
There are two cases.

\textbf{Case 1.}  Thanks to the linearised operator of the continuity path is invertible at $t=0$, (Proposition \ref{weak unique}), we could choose a sufficient small $0\leq t\leq \tau$ to have a uniform zero order estimate of the solution $\vphi$ on $[0,\tau]$.

\textbf{Case 2.}  When $\tau\leq t\leq 1$, putting the upper estimate (\corref{zeroup}) and the rough lower estimate (\corref{zerolowbig}) together, we arrive at
\begin{align*}
\osc(\varphi)\leq I(\om, \om_{\varphi})+C\cdot(1+\|\tilde \vphi\|_{1;\om_\vphi}).
\end{align*}
In order to bound $\| \tilde\vphi\|_{1;\om_\vphi}$, we calculate
\begin{align*}
\| \tilde\vphi\|_{1;\om_\vphi}=\int_M|\tilde\vphi|\omega_\vphi^n
&\leq\int_M|\sup_M\tilde\vphi-\tilde\vphi|\omega_\vphi^n+\int_M|\sup_M\tilde\vphi|\omega_\vphi^n\\
&= V\sup_M\tilde\vphi+\int_M\tilde\vphi\omega_\vphi^n+V\sup_M\tilde\vphi\\
&\leq2V\sup_M\tilde\vphi=2V(I(\om,\om_\vphi)+C).
\end{align*}
At the last step, we use \corref{zeroup},
$\sup_M\varphi\leq\frac{1}{V}\int_M\varphi\omega^n+
C$ again. In this case, for any $\tau\leq t\leq 1$ the Sobolev constants of $\om_\vphi$ (Theorem \ref{Sobolev inequalitybig}) are uniform and depend on $\tau, V, n$.

The bound of $I$ follows from the equivalence of the $I$ and $J$ functional and the monotonicity of $I-J$ along the continuity path from the \lemref{IJdecreasing}.
Therefore, the lemma is proved.
\end{proof}

\subsubsection{Higher order estimates}\label{Higher order estimate}
In order to derive the second order estimate, we follow the proof of Yau's Schwarz lemma by applying the Chern-Lu formula \cite{MR0486659}.
We derive the formula of $$A:=\tr_{\om_\vphi}\om - C\vphi= n- \Delta_\vphi \vphi - C\vphi .$$ 
We compute,
\begin{align}\label{lem_3_interior_laplacian_estimate}
 \Delta_\vphi (\tr_{\om_\vphi}\om)=
R_\vphi^{i\bar j} g_{i\bar j} - {g_\vphi}^{i\bar j}{g_\vphi}^{k\bar l}R_{i\bar j k \bar l}
-g^{i \bar j}{g_\vphi}^{k \bar l}{g_\vphi}^{p\bar q}
\p_{\bar l}{g_\vphi}_{p\bar j} \p_k {g_\vphi}_{i \bar q}\; .
\end{align}
Here $R_{i\bar j k \bar l}$ is the Riemannian curvature of the background metric $\om$ and $R_\vphi^{i\bar j}$ is the Ricci curvature of $\om_\vphi$.
The Schwarz inequality implies
\begin{align}\label{la sch}
g_\vphi^{k\bar l}\p_kg_\vphi^{i\bar j}g_{i\bar j}\p_{\bar l}g_\vphi^{p\bar q}
g_{p\bar q}
\leq
-(g_\vphi^{k\bar l}g_\vphi^{p\bar j}g_{i\bar j}\p_{\bar l}
g_{\vphi p\bar q}\p_{k}g_\vphi^{i\bar q})(g_\vphi^{i\bar j}g_{i\bar j})\;.
\end{align}
We apply \eqref{lem_3_interior_laplacian_estimate} and \eqref{la sch} to obtain
\begin{align*}
\tri_\vphi[\log tr_{\om_\vphi}\om]
&=\frac{\tri_\vphi(tr_{\om_\vphi}\om)}{tr_{\om_\vphi}\om}-
\frac{g_\vphi^{k\bar l}\p_kg_\vphi^{i\bar j}g_{i\bar j}\p_{\bar l}g_\vphi^{p\bar q}
g_{p\bar q}}{(tr_{\om_\vphi}\om)^2}\\
&\geq\frac{R_\vphi^{i\bar j}g_{i\bar j}-g_\vphi^{i\bar j}g_\vphi^{k\bar l}R_{i\bar jk\bar l}}{tr_{\om_\vphi}\om}\;.
\end{align*}
Thus
\begin{align*}
\tri_\vphi(\log \tr_{\om_\vphi}\om-C\vphi)\geq
\frac{R_\vphi^{i\bar j}g_{i\bar j}-g_\vphi^{i\bar j}g_\vphi^{k\bar l}R_{i\bar jk\bar l}}{\tr_{\om_\vphi}\om}-Cn+C\tr_{\om_\vphi}\om.
\end{align*}
Since along the path, we have
$Ric_\vphi>0$ and $Rm(\om)$ has upper bound.
Applying the cone maximum principle, we have the lower bound of $\om_\vphi$
\begin{align*}
tr_{\om_\vphi}\om\leq C(\osc\vphi
, \sup_M R_{i\bar j k\bar l}).
\end{align*}
While, we also have its upper bound
\begin{align*}
\tr_\om\om_\vphi\leq[\frac{\om^n_\vphi}{\om^n} tr_{\om_\vphi}\om]^n
=[e^{\mathfrak f-t\vphi} tr_{\om_\vphi}\om]^n.
\end{align*}

The Evans-Krylov estimate was proved by Calamai and the second author in Proposition 4.6 and 4.7 in \cite{Calamai-Zheng} with an angle restriction till $\frac{2}{3}$. Generally, we encourage readers for further reading like \cite{MR3264767}, \cite{arXiv:1405.1021}, \cite{JMR}, \cite{arXiv:1307.6375} and the references therein.

\subsection{Existence of K\"ahler-Einstein cone metrics}

A byproduct of Section \ref{Apriori estimates} is a proof of the existence of the K\"ahler-Einstein cone metrics. Ding's functional \cite{MR967024} could be generalised to the conic setting, i.e. for all $\vphi\in \mathcal H^0_\b$,
\begin{align*}
D_\om(\vphi)&=\frac{1}{V}\int_M\vphi\om^n-J_\om(\vphi),\\
F(\vphi)
&=-D_\om(\vphi)-\frac{1}{V}\int_Mf_\om\om^n
+\log(\frac{1}{V}\int_Me^{-\vphi+f_\om}\om^n).
\end{align*}
\begin{thm}
When the conic Ding functional is proper, i.e. there are two positive constants $A$ and $B$ such that for all $\vphi\in \mathcal H_\b^0$,
\begin{align*}
F_\omega(\vphi) \geq A I_\omega(\vphi)- B.
\end{align*}
Then there exits a K\"ahler-Einstein cone metric.
\end{thm}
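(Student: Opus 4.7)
The plan is to run the continuity path $\{\vphi(t)\}_{t\in[0,1]}$ from Section 4 and establish closedness at $t=1$ using the properness hypothesis. Openness on $[0,1)$ is already provided by Proposition \ref{weak unique}, together with the existence of the trivial solution $\vphi(0)=0$. So it suffices to obtain uniform $C^{2,\alpha}_\beta$ a priori bounds on $\vphi(t)$ for $t\in[0,1)$, which then allow us to extract a subsequential limit solving \eqref{keg}; the regularity on $M$ follows by the standard bootstrap for complex Monge-Amp\`ere equations, and the result is $\omega_{\vphi(1)}\in\mathcal{E}$.

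The main new input is to translate properness of $F$ into a uniform bound on $I(\omega,\omega_{\vphi(t)})$ along the path. The plan is to compute $\frac{d}{dt}F(\vphi(t))$ using the differentiated continuity equation $(\Delta_{\vphi(t)}+t)\dot\vphi(t)=-\vphi(t)$ from Proposition \ref{weak unique}, together with the derivative formulas for $D_\omega$ in \eqref{eq:D} and for the logarithmic term. All the integrations by parts involved are justified because $\vphi(t),\dot\vphi(t)\in C^{2,\alpha}_\beta$, precisely the setting of Lemma \ref{lem-identity} (approximating via the cut-off $\chi_\epsilon$ of \eqref{121} and using Lemma \ref{Bo} to kill boundary contributions near $D$). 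After the smoke clears, the derivative comes out with a definite sign $\tfrac{d}{dt}F(\vphi(t))\leq 0$, so $F(\vphi(t))\leq F(\vphi(0))=0$ throughout $[0,1)$. Combined with the properness hypothesis $F_\omega(\vphi(t))\geq AI_\omega(\vphi(t))-B$, this yields the uniform bound $I_\omega(\vphi(t))\leq B/A$.

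With this in hand, the $C^0$ estimate is immediate: Proposition \ref{Zero order estimate} gives $\osc(\vphi(t))\leq C(I(\omega,\omega_{\vphi(t)})+1)\leq C'$ for $t\in[0,1)$. The normalization $\vphi(t)\in\mathcal{H}^0_\beta$ then pins down the constant. The second order estimate is then the Chern-Lu computation carried out in Section \ref{Higher order estimate}, which uses that $Ric_{\vphi(t)}\geq t\omega_{\vphi(t)}\geq 0$ along the path and depends only on $\osc(\vphi(t))$ and the geometry of the background metric; the Evans-Krylov estimate of Calamai-Zheng then upgrades this to a uniform $C^{2,\alpha}_\beta$ bound with $\alpha\leq\min\{\tfrac{2}{\beta}-2,\tfrac{1}{\beta}\}$ as in Lemma \ref{f}. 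One passes to the limit $t_k\to 1$ by Arzel\`a-Ascoli in $C^{2,\alpha'}_\beta$ for $\alpha'<\alpha$, and the limit $\vphi(1)$ satisfies $\omega_{\vphi(1)}^n=e^{\mathfrak{f}-\vphi(1)}\omega^n$, i.e.\ gives a K\"ahler-Einstein cone metric.

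The hard part, as usual in conic geometry, is the monotonicity step: the derivative of $F$ decomposes as a difference of integrals one of which is taken against $\omega^n_{\vphi(t)}$ (which has no mass on $D$) and one against $e^{\mathfrak{f}-\vphi(t)}\omega^n$, and to recombine them into a manifestly non-positive expression one must integrate by parts with $\vphi(t)$ and $\dot\vphi(t)$, where the gradient of $\vphi(t)$ in the cone direction blows up like $r^{2\beta-1}$. The cut-off argument of Lemma \ref{lem-identity}, together with the estimates on $\mathfrak f$ in Lemma \ref{f}, handles exactly this difficulty, so the computation is in fact formal and parallels the smooth Bando-Mabuchi case once the boundary contributions are controlled. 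Openness at $t=1$ is not needed for existence; it is only at the uniqueness stage that the bifurcation technique and the generalized Matsushima theorem come into play.
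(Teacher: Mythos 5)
Your proposal is correct and follows essentially the same route as the paper: run the continuity path, convert properness into a uniform bound on $I_\omega(\varphi(t))$, invoke the zero- and higher-order a priori estimates of Section \ref{Apriori estimates}, and pass to the limit at $t=1$. The paper's own proof is extremely terse -- it simply \emph{asserts} that properness implies $I-J$ is bounded along the path -- and you correctly identify that the content of this assertion is a uniform upper bound on $F_\omega(\varphi(t))$ along the path (whether obtained via monotonicity of $F$ or, more simply, by applying Jensen's inequality to the logarithmic term after rewriting $e^{-\varphi+\mathfrak f}\omega^n=e^{(t-1)\varphi}\omega_\varphi^n$, which gives $F_\omega(\varphi_t)\leq -\frac1V\int_M\mathfrak f\,\omega^n$ directly), which combined with $F_\omega\geq A I_\omega - B$ gives $I_\omega(\varphi(t))\leq C$. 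Two small points: (i) $F_\omega(\varphi(0))$ need not be exactly $0$ (and in the existence setting $\varphi(0)$ is the solution of the conic Calabi--Yau equation at $t=0$, not identically zero), but all that matters is that it is a fixed finite number; (ii) the integration-by-parts steps in the monotonicity computation are better referenced to the $C^{2,\alpha}_\beta$ integration-by-parts lemmas of Calamai--Zheng used in Lemma \ref{IJdecreasing}, rather than to Lemma \ref{lem-identity}, which is formulated specifically at a K\"ahler--Einstein cone metric -- though the cut-off mechanism behind both is identical. Also note the paper's displayed sign on the logarithmic term in $F$ appears to be a typo (it should be a minus for K\"ahler--Einstein metrics to be critical points); your argument implicitly uses the correct sign.
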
	
\begin{proof} 
When we assume that the conic Ding functional is proper, the $I_\om-J_\om$ functional is bounded along the continuity path. And then the uniform estimates in the sections above i.e. the zero order estimate in Section \ref{Zero order estimate title} and the higher order estimate in Section \ref{Higher order estimate} could be applied to the path. Thus the existence follows from the continuity method.
\end{proof}
The notion of the properness for the smooth K\"ahler metrics was introduced in Tian \cite{MR1471884}.
\subsection{Choosing the automorphism}\label{Choosing the automorphism}

We are given a K\"ahler cone metric $\om$ and an orbit $\mathcal O$ in the space of K\"ahler-Einstein cone metrics.
Let $\theta$ be a K\"ahler-Einstein cone metric in the orbit $\mathcal O$. 
Then there exists $\la_\theta \in C^{2,\alpha}_\beta$ such that 
\begin{align*}
\theta=\om_{\la_\theta}=\om+i\p\bar\p \la_\theta.
\end{align*}

We minimise the following functional over the orbit, i.e. for any $\theta\in \mathcal O$,
\begin{align*}
E(\theta;\om)&:=I(\om,\theta)-J(\om,\theta)\\
&=-\int_M \la_\theta \theta^n.
\end{align*}
\begin{prop}
$E(\cdot;\om)$ has a minimiser $\theta$ over the orbit $\theta\in \mathcal O$ such that $\theta=\om_{\lambda_\theta}$ and $\la_\theta\in C^{2,\alpha}_\beta$.
\end{prop}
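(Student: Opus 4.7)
The strategy is the Bando–Mabuchi minimization argument adapted to the conic setting, built on the generalized Matsushima Theorem \thmref{reductivityStatement} together with the integration-by-parts machinery of \lemref{lem-identity}.

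First, observe that $E(\theta;\om) = I(\om,\theta) - J(\om,\theta) \geq \tfrac{1}{n+1} I(\om,\theta) \geq 0$ by the inequality $J \leq \tfrac{n}{n+1} I$ recorded in \secref{skcp}. Hence $E_0 := \inf_{\theta' \in \mathcal{O}} E(\theta';\om)$ is a finite non-negative number, and I pick a minimizing sequence $\theta_k = \sigma_k^* \theta_0 \in \mathcal{O}$ for some fixed base point $\theta_0 \in \mathcal{O}$ and $\sigma_k \in G$. After averaging if necessary, I may assume a maximal compact subgroup $K \supset K_{\theta_0}$ acts on $\theta_0$ by isometries. By \thmref{reductivityStatement} we have $G = K^{\mathbb{C}}$, so Cartan decomposition gives $\sigma_k = k_k \cdot \exp(\sqrt{-1}X_k)$ with $k_k \in K$ and $X_k \in \mathfrak{k}_{\theta_0}$, and the $K$-isometry yields $\theta_k = \exp(\sqrt{-1}X_k)^*\theta_0$. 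The problem is thus reduced to a minimization over the finite-dimensional real vector space $\mathfrak{k}_{\theta_0}$.

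The heart of the argument is to prove that $E$ is strictly convex along each ray $s \mapsto \theta_s := \exp(s\sqrt{-1}X)^* \theta_0$ in this parameterization. For $X \in \mathfrak{k}_{\theta_0}$, \thmref{reductivityStatement} supplies a real-valued eigenfunction $\phi_X \in \sqrt{-1}H_{\theta_0} \subset C^{2,\alpha}_\beta$ that is a potential for $\sqrt{-1}X$; then the potential $\lambda(s) \in \mathcal{H}^0_\beta$ of $\theta_s$ satisfies $\dot{\lambda}(s) = \phi_X \circ \exp(s\sqrt{-1}X)$, and a direct computation of the second variation of $I-J$ along this path should give
\[
\frac{d^2}{ds^2} E(\theta_s;\om) \;=\; \frac{1}{V}\int_M |\bar\partial \phi_X|^2_{\theta_s}\, \theta_s^n \;\geq\; 0 ,
\]
with equality only if $X = 0$. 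Deriving this formula requires moving derivatives across the divisor $D$ via integration by parts, and the justification is provided exactly by \lemref{lem-identity}, whose cutoff-based proof handles the conic boundary terms using the $C^{2,\alpha}_\beta$ regularity of $\phi_X$ and of $\lambda(s)$.

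Finally I conclude by dichotomy. If $\{X_k\}$ is bounded in $\mathfrak{k}_{\theta_0}$, extract a subsequence $X_k \to X_\infty$; the metric $\theta_\infty := \exp(\sqrt{-1}X_\infty)^* \theta_0$ then lies in $\mathcal{O}$ and realizes $E_0$, and $\lambda_{\theta_\infty} \in C^{2,\alpha}_\beta$ because $\exp(\sqrt{-1}X_\infty) \in \Aut(X,D)$ acts holomorphically, fixes $D$, and thus respects Donaldson's conic coordinates, so pullback preserves the class $C^{2,\alpha}_\beta$. If instead $|X_k| \to \infty$, then compactness of the unit sphere in $\mathfrak{k}_{\theta_0}$ together with the strict positivity of the second variation for $X \neq 0$ yields a uniform lower bound for the second derivative along unit-speed rays, and convexity forces $E(\theta_k) \to \infty$, contradicting $E(\theta_k) \to E_0 < \infty$.

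The main obstacle is the convexity step. In the smooth setting this is an exercise in integration by parts along the one-parameter family of metrics; in the conic setting, the second variation involves $\partial\bar\partial\phi_X$ against $\phi_X$ on a manifold whose metric degenerates on $D$, and naive integration by parts produces boundary terms along $D$ that are not obviously negligible. The point is that \lemref{lem-identity} (applied with $\theta = \theta_s$, $\varphi = \psi = \phi_X$) supplies exactly the required identity, with the cutoffs $\chi_\epsilon$ absorbing the conic contributions — so the whole scheme hinges on the conic analytic groundwork developed in \S\ref{sec01}. A secondary, more routine, check is the preservation of $C^{2,\alpha}_\beta$ under $\Aut(X,D)$-pullback, which holds since the group acts smoothly away from $D$ and by holomorphic diffeomorphisms of $D$.
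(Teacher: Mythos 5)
Your approach diverges substantially from the paper's, and it contains a genuine gap in the convexity step.

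The paper's proof is a direct variational method: it shows that sublevel sets $\{E \leq r\}$ are bounded in $C^{2,\alpha}_\beta$ by feeding the bound on $I - J$ into the zero-order estimate of Proposition~\ref{Zero order estimate} (which only needs a uniform Sobolev constant, and all metrics in the orbit share the same one) and then into the higher-order estimates of Section~\ref{Higher order estimate} (using the common Ricci lower bound of K\"ahler--Einstein cone metrics). Compactness of the sublevel set then yields a minimizer. You instead propose a Cartan-decomposition reduction to $\mathfrak{k}_{\theta_0}$ plus strict convexity of $E$ along the rays $s \mapsto \exp(s\sqrt{-1}X)^*\theta_0$, followed by a dichotomy. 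The reduction to $\mathfrak{k}_{\theta_0}$ via reductivity is a reasonable idea, but the pivotal second-variation formula you assert,
\[
\frac{d^2}{ds^2}\,E(\theta_s;\om)\;=\;\frac{1}{V}\int_M |\bar\partial\phi_X|^2_{\theta_s}\,\theta_s^n,
\]
is not correct. The paper's Lemma~\ref{propeetyofgauge} computes the genuine Hessian of $E$ along the orbit as
\[
D^2E_{\theta}(u,u)\;=\;\int_M\Bigl(1+\tfrac{1}{2}\tri_{\theta}\la_{\theta}\Bigr)\,u^2\,\theta^n
\;=\;\int_M u^2\,\theta^n-\int_M \bigl[u^2-|\partial u|^2_{\theta}\bigr]\,\la_{\theta}\,\theta^n,
\]
which depends on the potential $\la_\theta$ of $\theta$ relative to $\om$ and has no a~priori sign. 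Indeed, the paper's proof of the main uniqueness theorem explicitly relies on the fact that $D^2E_{\theta_1}(u,u)\geq 0$ only \emph{because} $\theta_1$ is the global minimiser, and then perturbs the reference metric to $\om_1^{\eps}=(1-\eps)\om+\eps\theta_1$ precisely to upgrade nonnegativity to strict positivity. If $E$ were strictly convex along every ray, that perturbation and the whole bifurcation machinery of Subsection~\ref{Bifurcation} would be superfluous. Consequently your dichotomy argument breaks down: when $|X_k|\to\infty$ you cannot conclude $E(\theta_k)\to\infty$ from convexity that you have not established. Lemma~\ref{lem-identity} does justify the conic integrations by parts, but it cannot rescue a sign you do not have. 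The properness of $E$ on the orbit must instead be obtained as in the paper, from the uniform Sobolev and Ricci bounds, not from a pointwise convexity inequality.
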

\begin{proof}
In order to apply the variational direct method, it suffices to prove the level set of the $E\leq r$ is bounded. That follows from the apriori estimates including the zero order estimate (Proposition \ref{Zero order estimate}) and the higher order estimates (Section \ref{Higher order estimate}). The former is true since $I-J$ is bounded and over the whole orbit all metrics have the same Sobolev constant. While, the latter holds since we have Ricci lower bound of each K\"ahler-Einstein cone metric in $\theta\in \mathcal O$.
\end{proof}
From now on, we use $\theta$ to denote the minimiser.
We also denote $$L_\theta=\tri_\theta+id.$$
\begin{lem}\label{propeetyofgauge}
Let $\theta=\om_{\la_\theta}$ be a minimiser of the function $E$. Then we have that
\begin{itemize}
\item $\forall u\in Ker(L_\theta)$, $\int_M\la_\theta \cdot u\cdot\theta^n=0$,
\item \begin{align*}
D^2 E_{\theta}(u,v)&=\int_M(1+\frac{1}{2}\tri_\theta\la_\theta)\cdot u\cdot v\cdot\theta^n\\
&=\int_M uv-[uv-<\p u,\p v>_\theta]\cdot\la_\theta\cdot \theta^n.
\end{align*}
\end{itemize}
\end{lem}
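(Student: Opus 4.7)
The plan is to parametrise a curve in the orbit $\mathcal{O}$ through $\theta$ via the flow of a real holomorphic vector field supplied by Theorem~\ref{reductivityStatement}, and then differentiate the functional $E(\phi)=-V^{-1}\int_M\phi\,\omega_\phi^n$ twice along this curve. Given $u\in\ker(L_\theta)=H_\theta$, Theorem~\ref{reductivityStatement}(i) yields a holomorphic $(1,0)$ vector field $Y=Y_{u,\theta}\in\mathfrak{g}$ tangent to $D$, characterised by $\bar\partial u=\theta\wedge Y$. Setting $X=\tfrac12(Y+\bar Y)$, a direct computation using $d\theta=0$ gives $\mathcal{L}_X\theta=i\partial\bar\partial u$. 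Let $\rho_t=\exp(tX)\in G$ and let $\phi_t$ be the potential of $\rho_t^*\theta$ normalised by $D_\omega(\phi_t)=0$; differentiating $\omega+i\partial\bar\partial\phi_t=\rho_t^*\theta$ one gets $\dot\phi_t=(u\circ\rho_t)+c_t$, and the normalisation (together with $\int u\,\theta^n=0$, which is just $\Delta_\theta u=-u$ integrated) forces $c_0=0$, so $\phi_0=\la_\theta$ and $\dot\phi_0=u$.

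For the first bullet I apply $\tfrac{d}{dt}(I-J)=-V^{-1}\int\phi\Delta_\phi\dot\phi\,\omega_\phi^n$ (as used in Lemma~\ref{IJdecreasing}), integrate by parts via Lemma~\ref{lem-identity} to move $\Delta_\theta$ from $u$ to $\la_\theta$, and insert $\Delta_\theta u=-u$ to obtain $\tfrac{d}{dt}|_{t=0}E(\phi_t)=V^{-1}\int_M\la_\theta\,u\,\theta^n$. Since $\theta$ minimises $E$ on $\mathcal{O}$ and $u\in H_\theta$ exhausts the tangent directions, this integral must vanish, giving the first claim. For the second bullet I first identify $\ddot\phi_0$: differentiating $\dot\phi_t=u\circ\rho_t+c_t$ once more yields $\ddot\phi_0=Xu+\dot c_0$; a short calculation gives $Xu=\tfrac12(Yu+\bar Yu)=|\partial u|^2_\theta$, and differentiating the constraint $\int\dot\phi_t\,\omega_{\phi_t}^n=0$ at $t=0$ together with $\int u^2\theta^n=\int|\partial u|^2_\theta\,\theta^n$ forces $\dot c_0=0$, so $\ddot\phi_0=|\partial u|^2_\theta$.

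Plugging this into the expanded $\tfrac{d^2}{dt^2}|_{t=0}E(\phi_t)$ produces four terms which I reorganise using the Bochner-type identity $(\Delta_\theta+1)\langle\partial u,\partial u\rangle_\theta=|\partial\bar\partial u|^2_\theta$ from Lemma~\ref{lem-identity}; after cancellation the result is
\[
V\cdot D^2E_\theta(u,u)=\int_M u^2\,\theta^n-\int_M\la_\theta\bigl(u^2-|\partial u|^2_\theta\bigr)\theta^n,
\]
and polarisation yields the second displayed equality of the lemma. The first equality is then equivalent to the second by one further integration by parts, shifting $\tfrac12\Delta_\theta$ off $\la_\theta$ onto $uv$ via the product rule $\Delta_\theta(uv)=-2uv+2\langle\partial u,\partial v\rangle_\theta$. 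The only genuine difficulty is that every differentiation under the integral, chain rule along $\rho_t$, and integration by parts must be justified across the cone singularity along $D$; this is handled by the cut-off/approximation scheme of Section~\ref{sub2} together with Lemma~\ref{lem-identity}, crucially exploiting that $Y$ is tangent to $D$ and that $\la_\theta,u\in C^{2,\alpha}_\beta$, so the boundary contributions produced by the cut-off vanish as the cut-off parameter tends to zero.
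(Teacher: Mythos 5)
Your proof follows essentially the same route as the paper's: parametrise the orbit through $\theta$ by the flow of the real part of the holomorphic vector field $Y_{u,\theta}$ furnished by \thmref{reductivityStatement}, compute the first and second variations of $E$ along this curve, and use \lemref{lem-identity} together with the \secref{sub2} cut-off to justify the integrations by parts across $D$. The only real technical variation is that you pin down $\ddot\phi_0=|\partial u|^2_\theta$ \emph{exactly} via the chain rule $\tfrac{d}{dt}(u\circ\rho_t)=Xu\circ\rho_t$ and the normalisation $D_\om(\phi_t)=0$, then polarise, whereas the paper works with the two-parameter family $\sigma(s)\sigma(t)$, differentiates the Monge--Amp\`ere equation twice to get $\partial_s\partial_t\rho|_0=\langle\p u,\p v\rangle_\theta$ only modulo $\ker L_\theta$, and disposes of the ambiguity by invoking the first bullet.
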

\begin{proof}
We let $\sigma(t)$ be the one-parameter subgroup generated by the real part of the holomorphic vector field defined by $\uparrow^\theta\bar{\p} u$, which follows from Section \ref{sub2}. I.e. 
$$\sigma(t)=exp(t \Re \uparrow^\theta\bar{\p} u).$$
Then there exists $\rho(t)\in C^{2,\alpha}_\beta$ such that 
\begin{align*}
\sigma^\ast(t)\theta=\theta+i\p\bar\p\rho(t)
\end{align*}
or 
\begin{align*}
\sigma^\ast(t)\theta=\om+i\p\bar\p(\la_\theta+\rho(t)).
\end{align*}
The potential $\rho(t)$
satisfies $$\rho(t=0)=0, \quad \frac{d}{dt}|_{t=0}\rho(t)=u$$ and the K\"ahler-Einstein cone equation
\begin{align*}
\om_{\la_\theta+\rho(t)}^n=\theta^n e^{-\rho(t)}.
\end{align*}	
Differentiating this equation on the both sides on $t$, 
\begin{align}\label{1stdie}
(\tri_{\la_\theta+\rho(t)}+1)\frac{d\rho(t)}{dt}=0.
\end{align}
 and taking $t=0$, we have the linearisation equation at $t=0$,
\begin{align*}
(\tri_{\theta}+1)u=0.
\end{align*} 
Integrating over $M$ with respect to $\theta$ we arrive at
\begin{align}\label{uaverage}
\int_{M} (\tri_{\theta}+1)u \theta^n=0.
\end{align} 
Since $u\in C^{2,\a}_\b$, using integration by parts (Lemma 2.11 in \cite{Calamai-Zheng}), we have 
\begin{align}\label{uaverage}
\int_M u \theta^n=0,
\end{align}

Now the $E$-functional of $\sigma^\ast(t)\theta$ becomes
\begin{align*}
E(\sigma^\ast(t)\theta;\om)=-\int_M\la_\theta+\rho(t)\om^n_{\la_\theta+\rho(t)}.
\end{align*}
Differentiating its both sides on $t$
\begin{align*}
&\frac{d}{dt}E(\sigma^\ast(t)\theta;\om)\nonumber\\
&=-\int_M \frac{d}{dt}\rho(t)\om_{\la_\theta+\rho(t)}^n
-\int_M (\la_\theta+\rho(t))\tri_{\la_\theta+\rho(t)}\frac{d}{dt}\rho(t)\om_{\la_\theta+\rho(t)}^n,
\end{align*}
using \eqref{1stdie}
\begin{align}\label{1stt}
=-\int_M \frac{d}{dt}\rho(t)\om_{\la_\theta+\rho(t)}^n
+\int_M (\la_\theta+\rho(t))\frac{d}{dt}\rho(t)\om_{\la_\theta+\rho(t)}^n,
\end{align}
evaluating at $t=0$ 
\begin{align*}
\frac{d}{dt}E(\sigma^\ast(t)\theta;\om)\vert_{t=0}
=-\int_M u\theta^n
+\int_M \la_\theta  u\theta^n,
\end{align*}
applying the identity \eqref{uaverage} above,
we obtain the RHS equals
\begin{align*}
\int_M \la_\theta u\theta^n.
\end{align*}
Thus the the first identity follows for any $u\in Ker(L_\theta)$, i.e. $\tri_\theta u=-u$.

We further denote $\sigma(s)$ the one-parameter subgroup generated by the holomorphic potential $v$, i.e. $$\sigma(s)=exp(\Re \uparrow^\theta\bar{\p} v).$$ 
Let $\sigma(s,t)=\sigma(s)\sigma(t)$, then there exists K\"ahler cone potential $\rho(s,t)$ such that 
\begin{align*}
\sigma^\ast(s,t)\theta=\theta+i\p\bar\p\rho(s,t)
\end{align*}
or 
\begin{align*}
\sigma^\ast(s,t)\theta=\om+i\p\bar\p(\la_\theta+\rho(s,t)).
\end{align*}
Thus, $\rho(s=0,t=0)=0$, 
$$\frac{d}{dt}|_{t=0}\rho(s,t)=u\text{ and }\frac{d}{ds}|_{s=0}\rho(s,t)=v.$$ Differentiating \eqref{1stdie} on $s$ again,
\begin{align}\label{2nddie}
(\tri_{\la_\theta+\rho(s,t)}+1)\frac{\p^2\rho(s,t)}{\p s\p t}
=<\p\bar\p\frac{\p\rho(s,t)}{\p s},\p\bar\p\frac{\p\rho(s,t)}{\p t}>_{\la_\theta+\rho(s,t)}.
\end{align}	
Then setting $t=s=0$ and using the second formula in \lemref{lem-identity}, we get
\begin{align}\label{partialstrho}
\frac{\p^2\rho(s,t)}{\p s\p t}|_{_{s=t=0}}
= \langle \d u, \d v \rangle_{\theta} 
= \langle \d v, \d u \rangle_{\theta} 
\end{align}
modulo $Ker(\tri_{\theta}+1)$.
Similar to \eqref{1stt}, 
\begin{align*}
&\frac{\p}{\p t}E(\sigma^\ast(s,t)\theta;\om)\\
&=-\int_M \frac{\p}{\p t}\rho(s,t)\om_{\la_\theta+\rho(s,t)}^n
+\int_M (\la_\theta+\rho(s,t))\frac{\p}{\p t}\rho(s,t)\om_{\la_\theta+\rho(s,t)}^n.
\end{align*}
We differentiate it again on $s$,
\begin{align*}
&\frac{\p^2 E(\sigma^\ast(s,t)\theta;\om)}{\p s\p t}\\
&=-\int_M \frac{\p^2}{\p s\p t}\rho(s,t)\om_{\la_\theta+\rho(s,t)}^n
-\int_M \frac{\p}{\p t}\rho(s,t) \tri_{\la_\theta+\rho(s,t)}\frac{\p}{\p s}\rho(s,t) \om_{\la_\theta+\rho(s,t)}^n\\
&+\int_M \frac{\p}{\p s}\rho(s,t) \frac{\p}{\p t}\rho(s,t)\om_{\la_\theta+\rho(s,t)}^n
+\int_M (\la_\theta+\rho(s,t))\frac{\p^2}{\p s\p t}\rho(s,t)\om_{\la_\theta+\rho(s,t)}^n\\
&+\int_M (\la_\theta+\rho(s,t))\frac{\p}{\p t}\rho(s,t) \tri_{\la_\theta+\rho(s,t)}\frac{\p}{\p s}\rho(s,t) \om_{\la_\theta+\rho(s,t)}^n
\end{align*}
and then evaluate at $s=t=0$,
\begin{align*}
&\frac{\p^2 E(\sigma^\ast(s,t)\theta;\om)}{\p s\p t}|_{s=t=0}\\
&=-\int_M \frac{\p^2}{\p s\p t}\rho(s,t)|_{s=t=0} \theta^n
-\int_Mu \tri_{\la_\theta} v \theta^n\\
&+\int_M v u\theta^n
+\int_M \la_\theta\frac{\p^2}{\p s\p t}\rho(s,t)|_{s=t=0} \theta^n\\
&+\int_M \la_\theta u \tri_{\la_\theta} v \theta^n\\
&=-\int_M \frac{\p^2}{\p s\p t}\rho(s,t)|_{s=t=0} \theta^n+2\int_Muv\theta^n\\
&+\int_M\la_\theta \frac{\p^2\rho(s,t)}{\p s\p t}|_{s=t=0} \theta^n
+\int_M \la_\theta u \tri_\theta v \theta^n.
\end{align*}
Recall that both $u$ and $v$ are in $Ker(L_\theta)$, i.e. $\tri_\theta u=-u$ and $\tri_\theta v=-v$.
Using \eqref{partialstrho}, we rewrite the third term to be
\begin{align*}
\int_M\la_\theta  \langle \d u, \d v \rangle_{\theta}  \theta^n.
\end{align*}
And similarly,
the first term is reduced to
\begin{align*}
-\int_M \frac{1}{2}[ \langle \d u, \d v \rangle_{\theta} + \langle \d v, \d u \rangle_{\theta} ] \theta^n.
\end{align*}
The integration by parts (Lemma 2.11 in \cite{Calamai-Zheng}) further implies that the first term could also be transformed as
\begin{align*}
-\int_M uv \theta^n.
\end{align*}
The fourth term becomes, 
\begin{align*}
-\int_M \la_\theta  uv \theta^n.
\end{align*}
Thus, adding these terms together, we arrive at
\begin{align*}
RHS=\int_Muv\theta^n+\int_M\la_\theta  \langle \d u, \d v \rangle_{\theta} \theta^n
-\int_M \la_\theta  uv \theta^n.
\end{align*}
In conclusion, the second identity in the lemma follows directly by integration by parts.
\end{proof}

\subsection{Bifurcation at $t=1$}\label{Bifurcation}
The following existence, uniqueness and regularity of the linear equation with respect to the K\"ahler cone metrics in Calamai-Zheng \cite{Calamai-Zheng} is fundamental when applying the implicit function theorem. The general linear elliptic equation was considered,
\begin{equation}\label{linear equ ge}
  \left\{
   \begin{array}{rl}
Lv&=g^{i\bar j}v_{i\bar j}+b^iv_i+cv=f+\p_ih^i \text{ in } X\setminus D,\\
v&=v_0 \text{ on } \p X
   \end{array}
  \right.
\end{equation}
in the pair $(X,D)$.
Here $\p X$ is the boundary of $X$, $g^{i\bar j}$ is the inverse matrix of a $C^{\a}_\b$ K\"ahler cone metric $\om$ and suitable conditions on the coefficients $ b^i,c, f,  h^i, v_0$ are given. Note that both $b^i$ and $\p_i h^i$ are understood as vectors (not functions).

In \cite{Calamai-Zheng}, the general linear elliptic equation was solved on the manifolds with boundary and the proof used Schauder estimate in Donaldson \cite{MR2975584}. 
But here we only need the theory on the manifolds without boundary and the coefficients 
\begin{align}\label{linear equ ge co}
b^i=h^i=0, c=1 \text{ and }  f\in C^{\a}_\b.
\end{align}
I.e.
\begin{equation}\label{linear equ ge here}
Lv=g^{i\bar j}v_{i\bar j}+v=f\text{ in } M.
\end{equation}
The following is from Proposition 5.21 in \cite{Calamai-Zheng}.

\begin{prop}\label{implicit function theorem}
There exists a $C^{2,\a}_\b$ solution of \eqref{linear equ ge here} with data as \eqref{linear equ ge co}.
\end{prop}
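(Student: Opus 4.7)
The plan is to combine the weak solution theory in $W^{1,2}(M,\om)$ for the K\"ahler cone metric $\om$ with Donaldson's $C^{2,\a}_\b$ Schauder estimate, in a two-step procedure: first produce a weak $W^{1,2}$ solution by a Fredholm-type argument, then bootstrap the regularity. Writing the equation as $\tri_\om v + v = f$, the natural bilinear form on $W^{1,2}(M,\om)$ is
\[
B(v,w) := -\int_M \langle \p v,\p w\rangle_\om\, \om^n + \int_M vw\,\om^n,
\]
so that $v$ is a weak solution iff $B(v,\eta) = \int_M f\eta\,\om^n$ for every test $\eta\in C^{2,\a}_\b$. Here the integration by parts identity that legitimises the weak formulation is precisely Lemma~2.11 in Calamai--Zheng, used in the same way as in the proof of \lemref{lem-identity}.

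First I would shift the operator: for $\mu>0$ large enough, the modified form $B_\mu(v,w):=B(v,w)-(\mu+1)\int vw\,\om^n$ is coercive on $W^{1,2}(M,\om)$, because the Poincar\'e inequality for $\om$ (available since $\om\in C^\a_\b$, cf.\ the proof of \corref{zeroup}) controls the sign-indefinite zeroth order term. Lax--Milgram then inverts $-\tri_\om+\mu$ as a bounded operator $T_\mu:L^2(\om)\to W^{1,2}(\om)$; composing with the compact Rellich embedding $W^{1,2}(\om)\hookrightarrow L^2(\om)$ (available via the Sobolev inequality for K\"ahler cone metrics, cf.\ \thmref{Sobolev inequalitybig}) yields a compact self-adjoint operator $K_\mu$ on $L^2(\om)$. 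The equation $Lv=f$ becomes $(I-(\mu+1)K_\mu)v=K_\mu f$; the Fredholm alternative then supplies a solution $v\in L^2(\om)$ whenever $K_\mu f$ is $L^2$-orthogonal to $\ker(I-(\mu+1)K_\mu)=\ker L$. In the context where Proposition~\ref{implicit function theorem} is invoked (the openness step of the continuity method in \thmref{weak unique}, with $t\in[0,1)$), the no-kernel condition is exactly what is supplied by \thmref{eigenvalue}, so solvability is automatic there; in the general statement one records the additional orthogonality condition as a hypothesis or works modulo $\ker L$.

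Once we have a weak $v\in W^{1,2}(\om)$ with $\tri_\om v+v=f$, the regularity argument runs exactly as in the proof of \lemref{1110}: Proposition~5.12 of Calamai--Zheng (Harnack inequality / De Giorgi--type iteration for $W^{1,2}$ weak solutions of conic linear elliptic equations) first promotes $v$ to $C^\a_\b$, and then Donaldson's $C^{2,\a}_\b$ Schauder estimate applied to $\tri_\om v=f-v$ with right-hand side now in $C^\a_\b$ upgrades $v$ to $C^{2,\a}_\b$.

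The main obstacle will be the functional-analytic preparation in the cone setting rather than any single clever step: one must verify that the Sobolev and Poincar\'e inequalities, Rellich compactness, and the justification of integration by parts all hold robustly for metrics of class only $C^\a_\b$, and one must match the weak theory of Calamai--Zheng (which works with $\om\in C^\a_\b$ of the given pair $(X,D)$) to Donaldson's Schauder estimate, which is stated in coordinates adapted to the cone. All of these ingredients appear in Section~5 of Calamai--Zheng, so the present proposition is their combination with the Fredholm alternative; the role of this statement in the paper is precisely to package them so that the implicit function theorem can be applied along the continuity path in \thmref{weak unique}.
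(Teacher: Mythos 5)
The paper offers no proof of this proposition; it is recorded verbatim as Proposition~5.21 of Calamai--Zheng and simply cited. Your argument is therefore a genuine reconstruction, and the route you take---Lax--Milgram applied to the coercive shift $-\tri_\omega+\mu$, Rellich compactness to reduce to a Fredholm alternative for the compact operator $K_\mu$, and then the De Giorgi/Harnack machinery (Proposition~5.12 of Calamai--Zheng) followed by Donaldson's $C^{2,\alpha}_\beta$ Schauder estimate---is the standard scheme the cited result rests on; it has the merit of making explicit exactly which functional-analytic inputs the proposition packages. You are also right that $\tri_\omega+\mathrm{id}$ has a nontrivial kernel when $\omega$ is K\"ahler--Einstein (precisely the kernel studied in \secref{sec01}), so solvability requires an orthogonality hypothesis on $f$, or a restriction to $H^\bot_\theta$; the bare statement of the proposition glosses over this, and only makes sense given how it is used.

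That said, you mislocate where the proposition is actually invoked. In the openness step (Proposition~\ref{weak unique}) for $t\in[0,1)$ the linearised operator is $\tri_\vphi+t\,\mathrm{id}$, with zeroth-order coefficient $c=t<1$, which does not match the data \eqref{linear equ ge co}; there Proposition~\ref{eigenvalue} makes the operator injective, so no orthogonality condition is needed. Proposition~\ref{implicit function theorem}, with $c=1$, is invoked only in the bifurcation argument (Proposition~\ref{bifurcationat1}), where $\tri_\theta+\mathrm{id}$ is being inverted on $H^\bot_\theta$---so your orthogonality caveat is exactly the one that is operative there, not in Proposition~\ref{weak unique}. This is a misattribution rather than a mathematical gap; the remainder of your argument is sound, modulo the routine verification (supplied in Section~5 of Calamai--Zheng) that the Sobolev and Poincar\'e inequalities, Rellich compactness, and the integration-by-parts identities all hold for $C^\alpha_\beta$ K\"ahler cone metrics.
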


In later application, the coefficient $g$ is a K\"ahler-Einstein metric. So the kernel of this linear equation \eqref{linear equ ge here} generates a holomorphic vector field as proved in Section \ref{sub2}. 

Now we continue our proof of the bifurcation.
\begin{prop}\label{bifurcationat1}
Let $\theta=\om_{\la_\theta}$ be the minimiser of $E(\cdot;\om)$ and be the end point of the continuity path i.e. $\vphi(1)=\la_\theta$. If Hessian of $E(\cdot;\om)$ at $\theta$ is strictly positive definite, i.e.$$D^2 E_{\theta}(u,u)\geq \eps \int_M u^2 \theta^n$$ for some $\eps>0$, then there exists $\delta>0$ such that the continuity path is solvable for any $t\in(1-\delta,1]$.
\end{prop}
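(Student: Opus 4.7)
The plan is to apply a Liapunov--Schmidt reduction at the degenerate point $(\la_\theta, 1)$ of the continuity path equation $\Psi(\varphi, t) := \log(\om_\varphi^n / \om^n) + t\varphi - \mathfrak f = 0$. At $(\la_\theta, 1)$, the Fr\'echet derivative $D_\varphi \Psi = L_\theta := \tri_\theta + 1$ has finite-dimensional kernel $K$, isomorphic by Theorem~\ref{reductivityStatement} to the space of holomorphic vector fields tangential to $D$. Using the formal self-adjointness of $L_\theta$ with respect to $L^2(\theta^n)$ (via the Calamai--Zheng integration by parts) together with the Schauder theory of Proposition~\ref{implicit function theorem}, I split $C^{2,\alpha}_\b = K \oplus K^\perp$ with $L_\theta|_{K^\perp}$ an isomorphism onto its image. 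Writing $\varphi = \la_\theta + \psi_K + \psi_R$ with $\psi_K \in K$ and $\psi_R \in K^\perp$, the ordinary implicit function theorem applied to the $K^\perp$-component $\pi_{K^\perp}\Psi = 0$ produces a unique smooth family $\psi_R = \psi_R(\psi_K, t) \in K^\perp$ defined near $(0, 1)$, with $\psi_R(0, 1) = 0$ and (using $L_\theta u = 0$ for $u\in K$) $\partial_{\psi_K}\psi_R|_{(0, 1)} = 0$.

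The remaining reduced equation is $g(\psi_K, t) := \pi_K \Psi(\la_\theta + \psi_K + \psi_R(\psi_K, t), t) = 0$, valued in $K$. The key structural input is that $g(\psi_K, 1) \equiv 0$ in a neighborhood of $0\in K$: the orbit $\mathcal O$ through $\theta$ is, by Theorem~\ref{reductivityStatement}, a smooth $(\dim K)$-dimensional submanifold of potential space mapping isomorphically onto a neighborhood of $0 \in K$ under $\pi_K$, each of whose points is a K\"ahler--Einstein cone metric and so satisfies $\Psi(\cdot, 1) = 0$; uniqueness of the Liapunov--Schmidt solution forces the orbit parametrization to coincide with $\psi_K \mapsto \la_\theta + \psi_K + \psi_R(\psi_K, 1)$, whence $g(\cdot, 1) \equiv 0$ near $0$. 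Consequently we may factor $g(\psi_K, t) = (t - 1)\tilde g(\psi_K, t)$ with $\tilde g$ smooth, and solving the continuity path for $t$ slightly less than $1$ reduces to solving $\tilde g(\psi_K, t) = 0$.

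Analyze $\tilde g$ at $(0, 1)$. By direct chain-rule computation, $\tilde g(0, 1) = \partial_t g|_{(0, 1)} = \pi_K \la_\theta$, which vanishes exactly because $\theta$ is a critical point of $E(\cdot; \om)$ along its orbit, i.e.\ by the first conclusion of Lemma~\ref{propeetyofgauge}. The linearization $\partial_{\psi_K}\tilde g|_{(0, 1)} : K \to K$ is to be identified, via chain rule using $\partial_{\psi_K}\psi_R|_{(0, 1)} = 0$ and the integration-by-parts identities of Lemma~\ref{lem-identity}, with a bilinear form on $K$ governed by the Hessian $D^2 E_\theta|_K$ computed in Lemma~\ref{propeetyofgauge}; the strict positive-definiteness hypothesis $D^2 E_\theta(u, u) \geq \eps \int_M u^2 \theta^n$ then renders this linearization invertible. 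The ordinary implicit function theorem produces a unique smooth curve $\psi_K(t) \in K$ with $\psi_K(1) = 0$ solving $\tilde g(\psi_K(t), t) = 0$ for $t \in (1 - \delta, 1]$, and the required continuity path solution is $\varphi(t) := \la_\theta + \psi_K(t) + \psi_R(\psi_K(t), t) \in C^{2, \alpha}_\b$.

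The main obstacle is the careful chain-rule bookkeeping needed to identify $\partial_{\psi_K}\tilde g|_{(0, 1)}$ with a nondegenerate form controlled by $D^2 E_\theta|_K$. The subtle points are that the implicit dependence $\psi_R(\psi_K, t)$ contributes to every derivative of $g$, but the terms $L_\theta \circ (\text{something in } K^\perp)$ are killed by $\pi_K$ thanks to self-adjointness, and the remaining cross-terms involving $\langle\partial\bar\partial u, \partial\bar\partial v\rangle_\theta$ for $u, v\in K$ are handled by the commutation identity $(\tri_\theta + 1)\langle \partial u, \partial v\rangle_\theta = \langle \partial\bar\partial u, \partial\bar\partial v\rangle_\theta$ from Lemma~\ref{lem-identity}. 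Once this identification is made, the hypothesis of the proposition delivers exactly the invertibility needed to close the argument.
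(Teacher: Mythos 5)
Your proposal is correct and follows essentially the same route as the paper: a Liapunov--Schmidt (Bando--Mabuchi) reduction about $(\la_\theta,1)$, splitting $C^{2,\alpha}_\beta$ into $\ker L_\theta$ and its orthocomplement, solving the complementary part by the implicit function theorem, factoring $(t-1)$ out of the reduced finite-dimensional equation (after noting it vanishes identically at $t=1$ because the orbit of K\"ahler--Einstein cone metrics parametrizes the kernel directions), computing $\tilde g(0,1)=\pi_K\la_\theta=0$ via the first part of Lemma~\ref{propeetyofgauge}, and identifying $\partial_{\psi_K}\tilde g|_{(0,1)}$ with $D^2E_\theta|_K$ so that the strict positivity hypothesis gives invertibility. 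This is exactly the paper's argument, just phrased in the standard Liapunov--Schmidt language rather than the paper's explicit $\Phi^\parallel$/$\Phi^\bot$ notation.
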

\begin{proof}
We now write the continuity path as the fully non-linear operator from 
$C^{2,\a}_\b$ to $C^{\a}_\b$,
\begin{align*}
\Phi(t,\vphi)=\log\frac{\om_\vphi^n}{\om^n}+t\vphi-\mathfrak f.
\end{align*}

We denote $H_\theta$ the kernel space of the linearisation operator $$L_\theta=\tri_\theta + id.$$ The whole space $C^{2,\a}_\b$ is decomposed into the direct sum of $H_\theta$ and its orthogonal space $H^\bot_\theta$. From \lemref{propeetyofgauge}, $\la_\theta\in H^\bot_\theta$.

The path $\vphi-\la_\theta$ is then decomposed into
\begin{align*}
\vphi-\la_\theta=\vphi^\parallel+\vphi^\bot.
\end{align*}
While, we let $P$ denote the projection from $C^{2,\a}_\b$ to $H_\theta$ and decompose the linear operator $\Phi$ into two parts. 

We first consider the vertical part,
\begin{align*}
\Phi^\bot(t,\vphi^\parallel,\vphi^\bot)=(1-P)[\log\frac{\om_{\la_\theta+\vphi^\parallel+\vphi^\bot}^n}{\om^n}-\mathfrak f]+t\cdot(\la_\theta+\vphi^\bot).
\end{align*}
It vanishes at $$(t,\vphi^\parallel,\vphi^\bot)=(1,0,0),$$ since $\theta$ is a K\"ahler-Einstein cone metric. Meanwhiles, its derivative on $\vphi^\bot$ at $(1,0,0)$ is for any $u\in H^\bot_\theta$,
\begin{align*}
\delta_{\vphi^\bot}\Phi^\bot\vert_{(1,0,0)}(u)=\tri_\theta u+ u,
\end{align*}
which is invertible from $H^\bot_\theta$ to itself, according to the existence theorem (Proposition \ref{implicit function theorem}). Therefore, we are able to use the implicit function theorem on $C^{2,\a}_\b$ space to conclude that there is small neighbourhood $U$ near $$(t,\vphi^{\parallel})=(1,0)$$ such that 
\begin{align*}
\vphi^\bot: U \subset(1-\tau,1]\times H_\theta &\rightarrow H_\theta^\bot,\\
(t, \vphi^\parallel)&=\vphi^\bot(t, \vphi^\parallel)
\end{align*} solves \begin{align}\label{Phibot}\Phi^\bot(t,\vphi^\parallel,\vphi^\bot(t,\vphi^{\parallel}))=0
\end{align} when $(t,\vphi^{\parallel})\in U$. Moreover, $$\vphi^\bot(1,0)=0.$$

We further compute the full derivative of $\Phi^\bot$ at $(t,\vphi^{\parallel})=(1,0)$. Let $\vphi^\parallel(s)\in U$ with parameter $0\leq s\leq 0$ and 
\begin{align}\label{vphi(s)}
\vphi^\parallel(s=0)=0,\quad \frac{\p\vphi^\parallel(s)}{\p s}\vert_{s=0}=u\in H_\theta.
\end{align}
We have
\begin{align*}
0=\frac{\p \Phi^{\bot}}{\p s}=(1-P)[\tri_\vphi (\frac{\p\vphi^\parallel}{\p s}+\delta_{\vphi^\parallel}\vphi^\bot (\frac{\p\vphi^\parallel}{\p s}))+ t \cdot \delta_{\vphi^\parallel}\vphi^\bot (\frac{\p\vphi^\parallel}{\p s})]
\end{align*}
and at $s=0$, $t=1$, using $L_\theta u=0,$ 
\begin{align*}
0=\frac{\p \Phi^{\bot}}{\p s}
&=(1-P)[-u+\tri_\theta (\delta_{\vphi^\parallel}\vphi^\bot\vert_{(1,0)} (u))+   \delta_{\vphi^\parallel}\vphi^\bot\vert_{(1,0)} (u)]\\
&=(1-P)[L_\theta (\delta_{\vphi^{\parallel}} \vphi^{\bot}\vert_{(1,0)}(u) )].
\end{align*}
Since both the imagine of $1-P$ and $L_\theta$ are in $H_\theta^\bot$, and 
\begin{align*}
\delta_{\vphi^{\parallel}} \vphi^{\bot}: U\subset(1-\tau,1]\times H_\theta &\rightarrow H_\theta^\bot,
\end{align*}
we conclude that at $t=1$ and $\vphi^\parallel=0$,
\begin{align}\label{parallelbot}
\delta_{\vphi^{\parallel}} \vphi^{\bot}\vert_{(1,0)}(u) &=0, \forall u\in H_\theta.
\end{align}
Meanwhile, we differentiate \eqref{Phibot} on $t$,
\begin{align*}
\frac{\p \Phi^{\bot}}{\p t} &= (1-P)\tri_\vphi\frac{\p \vphi^{\bot}}{\p t}+\la_\theta+\vphi^{\bot}+t\frac{\p \vphi^{\bot}}{\p t}=0.
\end{align*} and evaluate at $t=1$ and $\vphi^\parallel=0$,
\begin{align}\label{vphibottheta}
\frac{\p \Phi^{\bot}}{\p t}\vert_{(1,0)} &= (\tri_\theta+1)\frac{\p \vphi^{\bot}}{\p t}\vert_{(1,0)}+\la_\theta=0.
\end{align}

We next consider the horizontal operator on the finite dimensional space $H_\theta$,
\begin{align*}
\Phi^\parallel(t,\vphi^\parallel)=P[\log\frac{\om_{\la_\theta+\vphi^\parallel+\vphi^\bot(t,\vphi^{\parallel})}^n}{\om^n}-\mathfrak f]+t\cdot\vphi^\parallel.
\end{align*} Then, at $t=1$ and $\vphi^\parallel=0$,
\begin{align*}\frac{\p \Phi^{\parallel}}{\p t}\vert_{(1,0)}=(P[\tri_\vphi\frac{\p \vphi^{\bot}}{\p t}]+\vphi^\parallel)\vert_{(1,0)} =0.
\end{align*}
Also, $\Phi^\parallel$ vanishes at $t=1$ for any $\vphi^\parallel\in H_\theta$, i.e.
\begin{align*}
\Phi^\parallel(1,\vphi^\parallel)=0,
\end{align*}
since at $t=1$, all K\"ahler-Einstein cone metrics are the solution of the nonlinear equation. Then we consider the modified functional
\begin{align*}
\tilde \Phi^\parallel(t,\vphi^\parallel)=\frac{\Phi^\parallel(t,\vphi^\parallel)}{t-1}.
\end{align*}
We could see that as $t\rightarrow 1$, 
\begin{align*}
\tilde \Phi^\parallel(1,\vphi^\parallel)=\frac{\p \Phi^{\parallel}}{\p t}=P[\tri_\vphi\frac{\p\vphi^\bot}{\p t}]+\vphi^{\parallel}.
\end{align*}
Again, we use the family of $\vphi^\parallel(s)$ defined in \eqref{vphi(s)}, its derivative on $\vphi^\parallel$ is
\begin{align}
\frac{\p}{\p s}\tilde \Phi^\parallel(1,\vphi^\parallel(s))
=-P<\p\bar\p[ \frac{\p\vphi^\parallel}{\p s}+ \delta_{\vphi^\parallel}\vphi^{\bot}(\frac{\p\vphi^\parallel}{\p s})],\p\bar\p\frac{\p\vphi^\bot}{\p t}>+\frac{\p\vphi^\parallel}{\p s}.
\end{align}
Then let this derivative evaluates at $s=0$ and use \eqref{parallelbot} to the first term, we have for any $u\in H_\theta$,
\begin{align}\label{bifurcationlinearequ}
(\delta_{\vphi^\parallel}\tilde \Phi^\parallel)\vert_{(1,0)}(u)
&=(\delta_{\vphi^\parallel}\frac{\p \Phi^{\parallel}}{\p t})\vert_{(1,0)}(u)\\
&=-P<\p\bar\p u,\p\bar\p \frac{\p\vphi^{\bot}}{\p t}\vert_{(1,0)}>_\theta+u\nonumber.
\end{align}
So for any $v\in H_\theta$,
\begin{align*}
(\delta_{\vphi^\parallel}\tilde \Phi^\parallel \vert_{(1,0)}(u), v)_{L^2(\theta)}
=\int_M uv-v<\p\bar\p u,\p\bar\p \frac{\p \vphi^{\bot}}{\p t}\vert_{(1,0)}>_\theta \theta^n.
\end{align*}
We apply \eqref{ibp222}, \eqref{vphibottheta} and the proof of \lemref{propeetyofgauge}, 
\begin{align*}
RHS
&=\int_M uv+[uv-<\p u,\p v>_\theta](\tri_\theta+1)\frac{\p}{\p t}\vphi^\bot\vert_{(1,0)} \theta^n\\
&=\int_M uv-[uv-<\p u,\p v>_\theta]\cdot\la_\theta\cdot \theta^n\\
&=D^2 E_{\theta}(u,v).
\end{align*}

Finally, from \lemref{propeetyofgauge} 
\begin{align*}
D^2 E_{\theta}(u,v)=\int_M(1+\frac{1}{2}\tri_\theta\la_\theta)\cdot u\cdot v\cdot\theta^n.
\end{align*}
From the assumption that Hessian of $E(\cdot;\om)$ at $\theta$ is strictly positive definite, i.e.
\begin{align*}
D^2 E_{\theta}(u,u)\geq \eps \int_M u^2 \theta^n.
\end{align*}
Since $u\in H_\theta$, i.e. $\tri_\theta u=-u$, noting that $u\in C^{2,\a}_\b$, we get by integration by parts,
\begin{align}
\int_M |\p u|^2_\theta\theta^n=\int_M u^2\theta^n.
\end{align}
Thus the bilinear form $D^2 E_{\theta}$ is coercive on the Hilbert space $H_\theta$ under the norm $W^{1,2}(\theta)$. 
Actually, $H_\theta$ is finite dimensional, since its element is ono-to-one corresponding to the holomorphic vector field and the set of holomorphic vector fields is finite dimensional.
Meanwhile, there exists a constant $C$ depending on $\theta$ such that, $$D^2 E_{\theta}(u,v) \leq C ||u||_{W^{1,2}(\theta)}\cdot||v||_{W^{1,2}(\theta)}$$ for all $u,v\in H_\theta$. Then the Lax-Milgram theorem implies there is a unique weak solution $u_w\in H_\theta$ such that $$D^2 E_{\theta}(u_w,v)=<w,v>_{W^{1,2}(\theta)}$$ for all $w,v\in H_\theta$. The regularity of $u_w$ is achieved by first applying the Hanarck inequality (Proposition 5.12 in \cite{Calamai-Zheng}), then using Donaldson's Schauder estimate.
Therefore, from the relation
$$(\delta_{\vphi^\parallel}\tilde \Phi^\parallel \vert_{(1,0)}(u), v)_{L^2(\theta)}=D^2 E_{\theta}(u,v),$$
the linearisation operator $\delta_{\vphi^\parallel} \tilde \Phi^\parallel \vert_{(1,0)} $ is invertible from $H_\theta$ to itself.

Then we are able to apply the implicit function theorem to $\tilde \Phi^\parallel(t,\vphi^\parallel)$ over $C^{2,\a}_\b$ to find a solution $\vphi^\parallel(t)\in C^{2,\a}_\b$ with $t\in (1-\tau,1]$ such that 
\begin{itemize}
\item $\tilde\Phi^\parallel(t,\vphi^\parallel(t))=0,$
\item $\vphi(1)=0$.
\end{itemize} 
Thus the original nonlinear equation is solved as
\begin{align*}
\Phi^\bot(t,\vphi^\parallel(t),\vphi^\bot(t,\vphi^{\parallel}(t)))=0.
\end{align*}
And moreover, $$\vphi(t)=\la_\theta+\vphi^\parallel(t)+\vphi^\bot(t,\vphi^{\parallel}(t))$$ is the solution to the continuity path on $t\in (1-\tau,1]$ with $$\vphi(1)=\la_\theta.$$
\end{proof}
\subsection{Proof of the main theorem}
\begin{proof}
The proof is paralleling to Bando-Mabuchi \cite{MR946233}. We are given a K\"ahler cone metric $\om$. We assume that there are two orbits $\mathcal O_1$ and $\mathcal O_2$. We minimise $E(\cdot,\om)$ at $\theta_1$ in $\mathcal O_1$, consider the linear segment $$\om_1^\eps=(1-\eps)\om+\eps\theta_1.$$ We let $E^\eps=E(\cdot,\om_1^\eps)$. Since 
\begin{align*}
\theta_1=\om+i\p\bar\p\la_{\theta_1}
=\om_1^\eps+i\p\bar\p\la^\eps_{\theta_1}
=(1-\eps)\om+\eps\theta_1+i\p\bar\p\la^\eps_{\theta_1},
\end{align*} we have $\la_{\theta_1}^\eps=(1-\eps) \la_{\theta_1}$.
So
\begin{align*}
D^2 E_{\theta_1}(u,v)
&=\int_M(1+\frac{1}{2}\tri_{\theta_1} \la_{\theta_1}^\eps)\cdot u\cdot v\cdot\theta_1^n\\
&=(1-\eps)\int_M(1+\frac{1}{2}\tri_{\theta_1} \la_{\theta_1})\cdot u\cdot v\cdot\theta_1^n
+\eps\int_M u\cdot v\cdot\theta_1^n.
\end{align*}
Then
\begin{align*}
D^2 E^\eps_{\theta_1}(u,u)=(1-\eps)D^2 E_{\theta_1}(u,u) +\eps\int_M u^2\theta_1^n>0.
\end{align*}
Here $D^2 E_{\theta_1}(u,u) $ is non-negative, since $\theta_1$ is the global minimiser in $\mathcal O_1$.

Then we minimise $E(\cdot;\om^\eps_1)$ at $\theta_2$ in $\mathcal O_2$, consider the linear segment $$\om_2^\eps=(1-\eps)\om_1^\eps+\eps\theta_2$$ and let $E_2^\eps=E(\cdot,\om_2^\eps)$.  Again, $D^2 (E_2^\eps)_{\theta_2}(u,u)$ is also strictly positive definite. 

So, choose $\om_2^\eps$ to be sufficient close to $\om^\eps_1$ and choose $\theta^\eps_1$ to be also close to $\theta_1$.  Then $D^2( E_2^\eps)_{\theta_1^\eps}(u,u)$ is again strictly positive definite. 

We now are able to construct two continuity paths connecting $\om_2^\eps$ to both $\theta_1^\eps$ and $\theta^2$, according to the bifurcation (Proposition \ref{bifurcationat1}), the openness (Proposition \ref{weak unique}) and the apriori estimates (Section \ref{Apriori estimates}). But again, the openness (Proposition \ref{weak unique}) implies the solution has to be unique so $\mathcal O_1=\mathcal O_2$.
\end{proof}


\begin{bibdiv}
\begin{biblist}

\bib{MR946233}{article}{
   author={Bando, Shigetoshi},
   author={Mabuchi, Toshiki},
   title={Uniqueness of Einstein K\"ahler metrics modulo connected group
   actions},
   conference={
      title={Algebraic geometry, Sendai, 1985},
   },
   book={
      series={Adv. Stud. Pure Math.},
      volume={10},
      publisher={North-Holland, Amsterdam},
   },
   date={1987},
   pages={11--40},
}

\bib{arXiv:1111.7158}{article}{
   author={Berman, Robert},
   author={Boucksom, S\'ebastien},
   author={Eyssidieux, Philippe},
   author={Guedj, Vincent},
   author={Zeriahi, Ahmed},
   title={K\"ahler-Einstein metrics and the K\"ahler-Ricci flow on log Fano varieties},
   journal={arXiv:1111.7158},
}

\bib{Bo}{article}{
   author={Berndtsson, Bo},
   title={An introduction to things $\dbar$},
   journal={IAS/Park City Mathematics Series},
   volume={},
   date={2008},
   number={},
   pages={},
}

\bib{Bo11}{article}{
   author={Berndtsson, Bo},
   title={$L^2$ extension of $\dbar$ closed forms},
   journal={arXiv:1104.4620},
}

\bib{MR3323577}{article}{
   author={Berndtsson, Bo},
   title={A Brunn-Minkowski type inequality for Fano manifolds and some
   uniqueness theorems in K\"ahler geometry},
   journal={Invent. Math.},
   volume={200},
   date={2015},
   number={1},
   pages={149--200},
}


\bib{Blocki}{article}{
   author={Blocki, Z.},
   title={Cauchy - Riemann meet Monge-Amp\`ere},
   journal={Bulletin of Mathematical Sciences},
   volume={137},
   date={2014},
   volume={4},
   number={3},
   pages={433-480},
}

\bib{MR3144178}{article}{
   author={Brendle, Simon},
   title={Ricci flat K\"ahler metrics with edge singularities},
   journal={Int. Math. Res. Not. IMRN},
   date={2013},
   number={24},
   pages={5727--5766},
}

\bib{Calamai-Zheng}{article}{
   author={Calamai, Simone},
   author={Zheng, Kai},
   title={Geodesics in the space of K\"ahler cone metrics, I},
   journal={Amer. J. Math.},
   volume={137},
   date={2015},
   number={5},
   pages={1149--1208},
}

\bib{MR1815410}{article}{
   author={Cheeger, Jeff},
   author={Colding, Tobias H.},
   title={On the structure of spaces with Ricci curvature bounded below. II},
   journal={J. Differential Geom.},
   volume={54},
   date={2000},
   number={1},
   pages={13--35},
}

\bib{MR3406526}{article}{
   author={Cheltsov, Ivan A.},
   author={Rubinstein, Yanir A.},
   title={Asymptotically log Fano varieties},
   journal={Adv. Math.},
   volume={285},
   date={2015},
   pages={1241--1300},
}

\bib{MR3264766}{article}{
   author={Chen, Xiuxiong},
   author={Donaldson, Simon},
   author={Sun, Song},
   title={K\"ahler-Einstein metrics on Fano manifolds. I: Approximation of
   metrics with cone singularities},
   journal={J. Amer. Math. Soc.},
   volume={28},
   date={2015},
   number={1},
   pages={183--197},
}

\bib{MR3264767}{article}{
   author={Chen, Xiuxiong},
   author={Donaldson, Simon},
   author={Sun, Song},
   title={K\"ahler-Einstein metrics on Fano manifolds. II: Limits with cone
   angle less than $2\pi$},
   journal={J. Amer. Math. Soc.},
   volume={28},
   date={2015},
   number={1},
   pages={199--234},
}

\bib{MR3264768}{article}{
   author={Chen, Xiuxiong},
   author={Donaldson, Simon},
   author={Sun, Song},
   title={K\"ahler-Einstein metrics on Fano manifolds. III: Limits as cone
   angle approaches $2\pi$ and completion of the main proof},
   journal={J. Amer. Math. Soc.},
   volume={28},
   date={2015},
   number={1},
   pages={235--278},
}

\bib{arXiv:1405.1021}{article}{
   author={Chen, Xiuxiong},
   author={Wang, Yuanqi},
   title={On the regularity problem of complex Monge-Amp\`ere equations with conical singularities},
   journal={arXiv:1405.1021},
}

\bib{MR608287}{article}{
   author={Croke, Christopher B.},
   title={Some isoperimetric inequalities and eigenvalue estimates},
   journal={Ann. Sci. \'Ecole Norm. Sup. (4)},
   volume={13},
   date={1980},
   number={4},
   pages={419--435},
}

\bib{MR967024}{article}{
   author={Ding, Wei Yue},
   title={Remarks on the existence problem of positive K\"ahler-Einstein
   metrics},
   journal={Math. Ann.},
   volume={282},
   date={1988},
   number={3},
   pages={463--471},
}

\bib{MR2975584}{article}{
   author={Donaldson, S. K.},
   title={K\"ahler metrics with cone singularities along a divisor},
   conference={
      title={Essays in mathematics and its applications},
   },
   book={
      publisher={Springer, Heidelberg},
   },
   date={2012},
   pages={49--79},
}

\bib{MR999971}{article}{
   author={Gallot, Sylvestre},
   title={In\'egalit\'es isop\'erim\'etriques et analytiques sur les
   vari\'et\'es riemanniennes},
   language={French, with English summary},
   note={On the geometry of differentiable manifolds (Rome, 1986)},
   journal={Ast\'erisque},
   number={163-164},
   date={1988},
   pages={5--6, 31--91, 281 (1989)},
}

\bib{MR976219}{article}{
   author={Gallot, Sylvestre},
   title={Isoperimetric inequalities based on integral norms of Ricci
   curvature},
   note={Colloque Paul L\'evy sur les Processus Stochastiques (Palaiseau,
   1987)},
   journal={Ast\'erisque},
   number={157-158},
   date={1988},
   pages={191--216},
}

\bib{MR1814364}{book}{
   author={Gilbarg, David},
   author={Trudinger, Neil S.},
   title={Elliptic partial differential equations of second order},
   series={Classics in Mathematics},
   note={Reprint of the 1998 edition},
   publisher={Springer-Verlag, Berlin},
   date={2001},
   pages={xiv+517},
}

\bib{arXiv:1307.6375}{article}{
   author={Guenancia, Henri},
   author={P\u{a}un, Mihai},
   title={Conic singularities metrics with prescribed Ricci curvature: the case of general cone angles along normal crossing divisors},
   journal={arXiv:1307.6375},
}

\bib{Hor}{book}{
   author={H\"ormander, L.},
   title={Introduction to Complex Analysis in Several Variables},
   publisher={North-Holland},
   date={1990},
   pages={254},
}

\bib{MR699492}{article}{
   author={Ilias, Sa{\"{\i}}d},
   title={Constantes explicites pour les in\'egalit\'es de Sobolev sur les
   vari\'et\'es riemanniennes compactes},
   language={French},
   journal={Ann. Inst. Fourier (Grenoble)},
   volume={33},
   date={1983},
   number={2},
   pages={151--165},
}

\bib{JMR}{article}{
   author={Jeffres, T.},
   author={Mazzeo, Rafe},
   author={Rubinstein, Y.},
   title={K\"ahler-Einstein metrics with edge singularities},
   journal= {arXiv:1105.5216}
}

\bib{MR1618325}{article}{
   author={Ko{\l}odziej, S{\l}awomir},
   title={The complex Monge-Amp\`ere equation},
   journal={Acta Math.},
   volume={180},
   date={1998},
   number={1},
   pages={69--117},
}

\bib{MR3229802}{article}{
   author={Li, Haozhao},
   author={Zheng, Kai},
   title={K\"ahler non-collapsing, eigenvalues and the Calabi flow},
   journal={J. Funct. Anal.},
   volume={267},
   date={2014},
   number={5},
   pages={1593--1636},
}

\bib{Li1}{article}{
   author={Li, Long},
   title={On the Spectrum of weighted Laplacian operator and its application to uniqueness of K\"ahler-Einstein metrics},
   journal={Math. Ann.},
   volume={362},
   date={2014},
   number={3},
   pages={1349-1378},
}

\bib{Li2}{article}{
   author={Li, Long},
   title={On the uniqueness of K\"ahler-Einstein cone metrics},
   journal= {arXiv 1402.4049}
}

\bib{LZ2}{article}{
    AUTHOR = {Li, Long},
    AUTHOR = {Zheng, Kai},
    TITLE = {Uniqueness of constant scalar curvature K\"ahler metrics with cone singularities, I: Reductivity},
      JOURNAL = {},
       VOLUME = {},
     PAGES= {ArXiv:1603.01743},
       YEAR={},

}

\bib{LZ3}{article}{
    AUTHOR = {Li, Long},
    AUTHOR = {Zheng, Kai},
    TITLE = {Uniqueness of constant scalar curvature K\"ahler metrics with cone singularities, II: Bifurcation},
      JOURNAL = {},
       VOLUME = {},
     PAGES= {Preprint},
       YEAR={},

}

\bib{SW}{article}{
   author={Song, Jian},
   author={Wang, Xiaowei},
   title={The greatest Ricci lower bound, conical Einstein metrics and the Chern number inequality},
   journal= {arXiv:1207.4839}
}

\bib{MR1471884}{article}{
   author={Tian, Gang},
   title={K\"ahler-Einstein metrics with positive scalar curvature},
   journal={Invent. Math.},
   volume={130},
   date={1997},
   number={1},
   pages={1--37},
}

\bib{MR1768112}{article}{
   author={Tian, Gang},
   author={Zhu, Xiaohua},
   title={Uniqueness of K\"ahler-Ricci solitons},
   journal={Acta Math.},
   volume={184},
   date={2000},
   number={2},
   pages={271--305},
}

\bib{MR0486659}{article}{
   author={Yau, Shing Tung},
   title={A general Schwarz lemma for K\"ahler manifolds},
   journal={Amer. J. Math.},
   volume={100},
   date={1978},
   number={1},
   pages={197--203},
}

\bib{MR480350}{article}{
   author={Yau, Shing Tung},
   title={On the Ricci curvature of a compact K\"ahler manifold and the
   complex Monge-Amp\`ere equation. I},
   journal={Comm. Pure Appl. Math.},
   volume={31},
   date={1978},
   number={3},
   pages={339--411},
}

\bib{Zheng}{article}{
   author={Zheng, Kai},
   title={K\"ahler metrics with cone singularities and uniqueness problem},
   conference={
      title={Proceedings of the 9th ISAAC Congress, Krak\'ow 2013},
   },
   book={
   title={Current Trends in Analysis and its Applications},
      series={Trends in Mathematics},
      publisher={Springer International Publishing},
   },
   date={2015},
   pages={395-408},
}
\end{biblist}
\end{bibdiv}

\end{document}